\documentclass[11pt]{article}

\usepackage[a4paper,left=2.5cm,right=2.5cm,top=2.5cm,bottom=2.5cm]{geometry}
\usepackage{amsmath,amssymb,amsthm}
\usepackage{mathrsfs}
\usepackage{hyperref}

\newcommand{\Orb}{\operatorname{Orb}}
\newcommand{\NO}{\operatorname{NO}}

\newcommand{\B}{\mathcal{B}}

\newcommand{\N}{\mathbb{N}}

\newcommand{\Span}{\operatorname{span}}

\newtheorem{theorem}{Theorem}[section]
\newtheorem{proposition}[theorem]{Proposition}
\newtheorem{corollary}[theorem]{Corollary}
\newtheorem{lemma}[theorem]{Lemma}
\theoremstyle{definition}
\newtheorem{definition}[theorem]{Definition}
\newtheorem{example}[theorem]{Example}
\newtheorem{remark}[theorem]{Remark}
\newtheorem{question}[theorem]{Question}
\newtheorem{problem}[theorem]{Problem}

\title{On Topological Numerical Transitivity, C*-Transitivity and generalizations}

\author{%
Otmane Benchiheb\thanks{Department of Mathematics, Faculty of Sciences, Chouaib Doukkali University, El Jadida, Morocco. Emails: \texttt{benchiheb.o@ucd.ac.ma}, \texttt{otmane.benchiheb@gmail.com}.}
\and
Stefan Ivkovi\'c\thanks{Mathematical Institute of the Serbian Academy of Sciences and Arts, Kneza Mihaila 36, 11000 Belgrade, Serbia. Email: \texttt{stefan.iv10@outlook.com}.}
}
\date{}

\begin{document}

\maketitle

\begin{abstract}
Motivated by the concept of numerical hypercyclicity, in this paper we introduce three new notions in linear dynamics: topological numerical transitivity, generalized numerical transitivity and C*-transitivity. The first notion is defined for operators on general Banach spaces, whereas the latter two are formulated in the setting of operators on Banach algebras. The concept of C*-transitivity is also applicable for operators on the space of Hilbert-Schmidt operators. We prove that these new notions are mutually different and we also show that they differ  from the standard notions in dynamics like hypercyclicity, supercyclicity and numerical hypercyclicity. In particular, while topological numerical transitivity implies numerical hypercyclicity, as we argue in the paper, we provide examples of numerically hypercyclic and strongly numerically hypercyclic operators that are not topologically numerically transitive and examples of numerically hypercyclic operators that are neither C*-transitive nor generalized numerically transitive. Also, we construct non-supercyclic C*-transitive and generalized numerically transitive operators on the C*-algebra of compact operators on a separable Hilbert space, as well as C*-transitive operators on the standard Hilbert C*-module. In addition, we  study diagonal operators on finite-dimensional Hilbert spaces and on
\(\ell^p(\mathbb N)\), \(1<p<\infty\), and  we obtain complete characterizations of
both numerical hypercyclicity and topological numerical transitivity in terms
of coefficient-simplex criteria. Further, we provide some sufficient conditions for diagonal operators on the standard Hilbert C*-module to be C*-transitive. All these results are illustrated with concrete examples. At the end of the paper, we compare topological numerical transitivity and C*-transitivity with numerous standard notions in linear dynamics, like N-weak supercyclicity, weak-n supercyclicity, $\epsilon-$cyclicity, Li-Yorke chaos and distributional chaos of type 3, and we prove that they genuinely differ from all these standard concepts in dynamics. 
\end{abstract}

\noindent\textbf{Keywords:} Topological and generalized numerical transitivity; numerical hypercyclicity; C*-transitivity; diagonal operators; \(\ell^p\)-spaces, compact operators, Hilbert C*-module

\medskip
\noindent\textbf{MSC (2020):} 47A16, 47B38, 47A35, 37B99.

\section{Introduction}

Linear dynamics of bounded operators on Banach spaces has become a major theme
in modern functional analysis, at the crossroads of operator theory,
topological dynamics, and approximation theory
\cite{Birkhoff1929,MacLane1952,Rolewicz1969,Kitai1982,GodefroyShapiro1991,Herrero1991,Ansari1995,LeonsaavedraMuller2004,GrosseErdmann1999,BayartGrivaux2006,BesPeris1999,CostakisSambarino2004,BayartMatheron2009,GrosseErdmannPeris2011}.  
In the classical setting, an operator \(T\) acting on a topological vector space
\(X\) is called hypercyclic, if there exists a vector \(x\in X\) whose orbit under \(T\)
\[
\Orb(T,x):=\{T^n x:n\ge 0\}
\]
is dense in \(X\), whereas \(T\) is called supercyclic if there exists a vector \(x\in X\) whose  projective orbit under \(T\)
\[
\Orb_p(T,x):=\{\lambda T^n x:n\ge 0\ , \lambda \in \mathbb{C} \}
\]
is dense in \(X\). Further, we say that $T$ is {\it topologically transitive} on $X$ if for each pair of open non-empty subsets $O_{1}$ and $O_{2}$ of $X$ there exists some $n \in \mathbb{N}$ such that $  T^{n}(O_{1}) \cap O_{2} \neq \varnothing .$ Also, we say that $T$ is {\it topologically mixing} on $X$ if for each pair of open non-empty subsets $O_{1}$ and $O_{2}$ of $X$ there exists some $N \in \mathbb{N}$ such that $  T^{n}(O_{1}) \cap O_{2} \neq \varnothing $ for every $ n \geq N.$

Since the pioneering works of Birkhoff and Rolewicz, this
notion and its relatives have generated a rich theory, involving mixing,
weak mixing, recurrence, universality, frequent hypercyclicity, and spectral
criteria
\cite{Birkhoff1929,MacLane1952,Rolewicz1969,Kitai1982,GodefroyShapiro1991,Herrero1991,Ansari1995,LeonsaavedraMuller2004,BayartGrivaux2006,BonillaGrosseErdmann2007,CostakisSambarino2004,BayartMatheron2009,GrosseErdmannPeris2011}. Also, we refer to \cite{saw} for some recent results on topological transitivity.

A complementary viewpoint arises when one observes the dynamics not through
vector orbits in the ambient space, but through scalar evaluations of iterates.
This philosophy is classical in numerical range theory
\cite{Lumer1961,BonsallDuncan1971,BonsallDuncan1973}, and it led Kim, Peris,
and Song to introduce the notion of \emph{numerical hypercyclicity}
\cite{KimPerisSong}. Let \(X\) be a complex Banach space and define
\[
\Pi(X):=\{(x,x^*)\in S_X\times S_{X^*}:x^*(x)=1\}.
\]
For \(T\in\mathcal B(X)\) and \((x,x^*)\in\Pi(X)\), the associated numerical
orbit is
\[
\operatorname{NO}(T;x,x^*):=\{x^*(T^n x):n\ge 0\}\subseteq\mathbb C.
\]
The operator \(T\) is called numerically hypercyclic if there exists
\((x,x^*)\in\Pi(X)\) such that \(\operatorname{NO}(T;x,x^*)\) is dense in
\(\mathbb C\) \cite{KimPerisSong}. Thus the emphasis shifts from density of
vector orbits in \(X\) to density of scalar observations in the complex plane.

This scalar-dynamical viewpoint is markedly different from ordinary
hypercyclicity. In particular, Kim, Peris, and Song proved that every complex
finite-dimensional Banach space of dimension at least two supports numerically
hypercyclic operators, a phenomenon impossible in the classical hypercyclic
theory \cite{KimPerisSong}. In the same work, they obtained characterizations
of numerically hypercyclic weighted shifts on classical sequence spaces
\cite{KimPerisSong}. 

The theory was subsequently deepened by Shkarin in a substantial way
\cite{Shkarin}. In particular, he introduced the notions of \emph{weakly
numerically hypercyclic} and \emph{strongly numerically hypercyclic} operators,
thereby clarifying the role of similarity and showing that numerical
hypercyclicity is not, in general, similarity invariant
\cite{Shkarin}. His work also provided spectral criteria, finite-dimensional
descriptions, examples and counterexamples, and several structural results
concerning powers, scalar multiples, and concrete operator classes
\cite{Shkarin}. Altogether, these developments show that numerical
hypercyclicity is a subtle scalar version of orbit complexity, parallel to
classical hypercyclicity but with its own genuinely distinctive behavior
\cite{KimPerisSong,KimPerisSongPoly,Shkarin}.

The natural domain for this theory is the state space \(\Pi(X)\). For every
\(T\in\mathcal B(X)\) and every \(n\ge 0\), one has the numerical evaluation
map
\[
F_n:\Pi(X)\to\mathbb C,
\qquad
F_n(x,x^*)=x^*(T^n x).
\]
From this perspective, ordinary numerical hypercyclicity says precisely that
the sequence \((F_n)_{n\ge 0}\) has at least one universal point
\cite{KimPerisSong,Shkarin}. This observation places the subject naturally in
the broader framework of universality theory and Baire-category methods
\cite{GrosseErdmann1999,CostakisSambarino2004,BayartMatheron2009,GrosseErdmannPeris2011}.  
Indeed, in classical linear dynamics, topological transitivity is closely tied
to the existence of residual sets of hypercyclic vectors, and the same
philosophy suggests that one should go beyond the mere existence of a single
numerically hypercyclic pair and ask for a genuinely topological form of
numerical orbit complexity
\cite{GrosseErdmann1999,BayartMatheron2009,GrosseErdmannPeris2011}.

The purpose of the present paper is to initiate the systematic study of this
stronger phenomenon. Given \(T\in\mathcal B(X)\), we say that \(T\) is
\emph{topologically numerically transitive} if for every pair of nonempty open
sets
\[
U\subset\Pi(X),
\qquad
V\subset\mathbb C,
\]
there exists \(n\ge 0\) such that
\[
F_n(U)\cap V\neq\varnothing.
\]
Thus the question is no longer whether there exists one exceptional pair with
dense numerical orbit, but whether every nonempty open region of the state
space contains a pair whose numerical orbit visits any prescribed open subset
of the plane. In other words, we pass from an existence statement to a
genericity statement.

This notion is motivated by several converging ideas. First, it is formally
analogous to topological transitivity for operator iterates and to universality
for sequences of continuous maps
\cite{GrosseErdmann1999,CostakisSambarino2004,BayartMatheron2009,GrosseErdmannPeris2011}.  
Second, it fits naturally with the Baire-category structure of the state set
\(\Pi(X)\), where universal pairs may be organized into residual subsets rather
than treated merely as isolated objects. Third, it reflects a broader trend in
linear dynamics, where one seeks not only dense orbits, but generic orbit
behavior, frequency properties, and robust transitivity mechanisms
\cite{Ansari1995,BayartGrivaux2006,BonillaGrosseErdmann2007,BayartMatheron2009,GrosseErdmannPeris2011}.  
In this sense, topological numerical transitivity should be viewed as the
Baire-generic analogue of numerical hypercyclicity.

The main abstract result of the paper shows that this interpretation is exact:
under the natural topology on \(\Pi(X)\), topological numerical transitivity is
equivalent to the fact that the set of universal pairs is a dense
\(G_\delta\)-subset of \(\Pi(X)\). Hence the new notion is not merely a formal
variant of numerical hypercyclicity, but a genuinely stronger property, forcing
residual abundance of universal pairs rather than the existence of one such
pair. This also leads naturally to the converse problem: does numerical
hypercyclicity imply topological numerical transitivity? As our results show,
this question is already nontrivial in concrete classes such as weighted shifts
and diagonal operators. We provide in this paper examples of numerically hypercyclic diagonal operators that are \textit{not} topologically numerically transitive, giving thus negative answer to the above question. Also, we give some examples of topologically numerically transitive operators.

After developing the general Baire-category framework, we study diagonal operators in both finite-dimensional Hilbert
spaces and on \(\ell^p(\mathbb N)\) for \(1<p<\infty\). In these classes, the
problem reduces to coefficient-simplex criteria: numerical hypercyclicity
becomes an existence condition for a suitable scalar exponential sum, whereas
topological numerical transitivity becomes the corresponding density condition.
This dichotomy provides one of the main conceptual contributions of the paper
and shows that the new notion is both tractable and genuinely different from
ordinary numerical hypercyclicity.

Now, linear dynamics of operators on Banach algebras and their ideals
constitutes an active area of research; see, for instance,
\cite{chaotic-multipliers,ZZD,BIMS,AOFA,iran1,kina} and the references therein.
Motivated by this line of research and inspired by the concept of topological
numerical transitivity, we
introduce two new notions for bounded linear operators on Banach algebras and Banach C*-modules, which we call \emph{generalized numerical transitivity} and \emph{C*-transitivity}. The first notion is defined as follows:\\
Let $\mathcal{A}$ be a Banach algebra and let
$T\in\mathcal{B}(\mathcal{A})$. For each $n\in\mathbb{N}$, we
consider the induced map
\[
\theta_n:\mathcal{A}\times\mathcal{A}\longrightarrow\mathcal{A},
\qquad
\theta_n(a,b)=aT^n(b).
\]
We say that $T$ is generalized numerically transitive if the family
$\{\theta_n\}_{n\in\mathbb{N}}$ is topologically transitive when
$\mathcal{A}\times\mathcal{A}$ is equipped with the product topology.

For the second notion, we let $ \mathcal{M}$ be a Banach bimodule over a $C^*$-algebra $ \mathcal{C}$, and 
$ \mathcal{K}$ be a cone in $ \mathcal{M}$. We assume that $X$ is a Banach space such that
there exists a sesquilinear map
\[
\varphi:X\times X\longrightarrow \mathcal{M}
\]
with the property that
$
\varphi(x,x)\in \mathcal{K}
$
and such that
\[
\|x \|_X= \|\varphi(x,x) \|_{\mathcal{M}}^{1/2}
\qquad \text{for all }x\in X.
\] As we argue in the paper, C*-algebras, Hilbert C*-modules and the space of Hilbert-Schmidt operators on a separable Hilbert space are examples of such spaces.\\
Also, we let again $S(X)$ denote the unit sphere of $X,$ and for $T\in \mathcal{B}(X)$ and $n\in\N$ we 
let
\[
\widetilde{\Theta}_n:S(X)\times S(X)\longrightarrow \mathcal{M}
\]
be given by
\[
\widetilde{\Theta}_n(x,y)=\varphi\bigl(T^n x,y\bigr).
\]
Then we say that $T$ is \emph{topologically C*-
	transitive with respect to $\varphi $ } (or shortly \emph{C*-transitive with respect to $\varphi $}) if the sequence
$\{\widetilde{\Theta}_n\}_{n\in\N}$ is topologically transitive.

Our first result regarding these two new notions shows that, when $X$ is a
separable Banach space with the approximation property, then every
hypercyclic operator on the Banach algebra $K(X)$ of compact
operators on $X$ is generalized numerically transitive; see
Corollary~\ref{cor:compact-extension}. However, it turns out that the converse does not hold in general. In Proposition \ref{modular} we construct C*-transitive and generalized numerically transitive operators on $B_0(H)$ (the C*-algebra of compact operators on a separable Hilbert space) that are not even supercyclic and hence not hypercyclic.\\
Next, in Proposition \ref{implication} we show that in the special case of operators on the C*-algebra of compact operators on a Hilbert space, topological C*-transitivity implies generalized numerical transitivity, however, then in Proposition \ref{hilbert-module} we construct a C*-transitive operator on the standard Hilbert C*-module which is not generalized numerically transitive, illustrating in this way the difference between C*-transitivity and generalized numerical transitivity. In this context, we would like to emphasize that it has been proved in \cite{filomat} that the standard Hilbert C*-module is also a Banach algebra under componentwise multiplication, hence the concept of generalized numerical transitivity applies also in this concrete case. As a consequence of the proof of Proposition \ref{hilbert-module}, we provide some sufficient conditions for diagonal operators on the standard Hilbert C*-module to be C*-transitive (Corollary \ref{diag-Hilbert} ), and we illustrate this result with concrete examples (Remark \ref{diag-example}). Further, in Proposition \ref{prop:TNT-not-GNT-Schatten} we establish the existence of topological numerically transitive and numerically hypercyclic operators that are not C*-transitive. Finally, in the series of corollaries at the end of the paper, we show that there exist topologically numerically transitive operators that not weakly n-supercyclic nor N-weakly supercyclic for any $n, N \in \mathbb N,$ that are not $\epsilon-$cyclic for any $\epsilon \in (0,1),$ that are not Li-Yorke chaotic nor distributionally chaotic of type 3, and that are not subspace hypercyclic. Also, we show that there exist strongly numerically hypercyclic operators and subspace supercyclic operators that are not topologically numerically transitive and that are not C*-transitive.

These results show that C*-transitivity, topological numerical transitivity and generalized numerical transitivity are
genuinely different from the standard notions in
linear and numerical dynamics, and also, in general, they differ from each other. We hope that the introduction of these
concepts will motivate further investigations of their properties.

The paper is organized as follows. In Section~2, we introduce the state space
\(\Pi(X)\), fix the norm-product topology and some other notions needed for the rest of the paper. In Section~3.1, we define topological numerical
transitivity, establish a Baire-category criterion for it, and study its relation with ordinary numerical hypercyclicity.  In Section~3.2, we study diagonal
operators in finite-dimensional Hilbert spaces and on \(\ell^p(\mathbb N)\),
\(1<p<\infty\), obtaining complete characterizations in these settings.
In Section 4, we introduce generalized numerical transitivity and C*-transitivity for operators on Banach algebras and operator ideals. We compare these two new notions with each other and study their relations with many other concepts in linear dynamics. Also, we present examples showing that they are genuinely distinct from several known notions of numerical dynamics.

\section{Preliminaries}

Let \(X\) be a complex Banach space, let \(X^*\) be its dual, and denote by
\[
S_X:=\{x\in X:\|x\|=1\},
\qquad
S_{X^*}:=\{x^*\in X^*:\|x^*\|=1\}
\]
the unit spheres of \(X\) and \(X^*\). Following the standard framework of
numerical hypercyclicity, we consider the state space
\[
\Pi(X):=\{(x,x^*)\in S_X\times S_{X^*}:x^*(x)=1\}.
\]

For \(T\in\mathcal B(X)\) and \((x,x^*)\in\Pi(X)\), we write
\[
\NO(T;x,x^*):=\{x^*(T^n x):n\ge 0\}\subseteq\mathbb C
\]
for the associated numerical orbit. Recall that \(T\) is numerically
hypercyclic if there exists \((x,x^*)\in\Pi(X)\) such that \(\NO(T;x,x^*)\) is
dense in \(\mathbb C\).

Given \(T\in\mathcal B(X)\), we shall study the family of maps
\[
F_n:\Pi(X)\to\mathbb C,
\qquad
F_n(x,x^*)=x^*(T^n x),
\qquad n\in\mathbb N_0.
\]
We equip \(X\times X^*\) with the product norm
\[
\|(x,x^*)\|_\times:=\max\{\|x\|,\|x^*\|\},
\]
and endow \(\Pi(X)\) with the induced topology.

\begin{lemma}\label{prop:Pi-closed}
The set \(\Pi(X)\) is closed in \(S_X\times S_{X^*}\).
\end{lemma}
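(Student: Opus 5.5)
The plan is to write \(\Pi(X)\) as the preimage of a closed set under a continuous map. Consider the evaluation map
\[
E: X\times X^*\to\mathbb C,\qquad E(x,x^*)=x^*(x),
\]
restricted to \(S_X\times S_{X^*}\). Then \(\Pi(X)=E^{-1}(\{1\})\cap (S_X\times S_{X^*})\). Since \(\{1\}\) is closed in \(\mathbb C\), it suffices to show that \(E\) is continuous with respect to the product norm \(\|\cdot\|_\times\).

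Continuity is the one step needing an estimate, and it is a routine bilinear bound. For \((x,x^*),(y,y^*)\in X\times X^*\) we write
\[
|x^*(x)-y^*(y)|\le |x^*(x-y)|+|(x^*-y^*)(y)|\le \|x^*\|\,\|x-y\|+\|x^*-y^*\|\,\|y\|.
\]
On \(S_X\times S_{X^*}\) both \(\|x^*\|\) and \(\|y\|\) equal \(1\), so the right-hand side is at most \(2\|(x,x^*)-(y,y^*)\|_\times\). Hence \(E\) is even Lipschitz there; on all of \(X\times X^*\) it is continuous, being locally Lipschitz.

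Alternatively, one can argue sequentially, since the topology is metrizable. Take \((x_k,x_k^*)\in\Pi(X)\) converging to \((x,x^*)\in S_X\times S_{X^*}\). The estimate above gives \(x_k^*(x_k)\to x^*(x)\), and since \(x_k^*(x_k)=1\) for all \(k\), we get \(x^*(x)=1\), so \((x,x^*)\in\Pi(X)\).

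There is no real obstacle here. The only point to be careful about is that the closedness is relative to \(S_X\times S_{X^*}\) with the norm-product topology. One could also note that \(S_X\times S_{X^*}\) is itself closed in \(X\times X^*\), because the norm is continuous, so \(\Pi(X)\) is in fact closed in \(X\times X^*\) and therefore complete. This is presumably what the paper needs later for its Baire-category arguments.
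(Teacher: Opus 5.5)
Your proof is correct and matches the paper's: \(\Pi(X)\) is the preimage of \(\{1\}\) under the continuous evaluation map \((x,x^*)\mapsto x^*(x)\) on \(S_X\times S_{X^*}\). You also supply the explicit bound \(|x^*(x)-y^*(y)|\le 2\|(x,x^*)-(y,y^*)\|_\times\), which the paper leaves as a straightforward check.
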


\begin{proof}
Consider
\[
\Phi:S_X\times S_{X^*}\to\mathbb C,
\qquad
\Phi(x,x^*)=x^*(x).
\]
It is straightforward to check that \(\Phi\) is continuous. Since
$
\Pi(X)=\Phi^{-1}(\{1\}),
$
the claim follows. \end{proof}

\begin{lemma}\label{prop:Pi-complete-baire}
The metric space \((\Pi(X),\|\cdot\|_\times)\) is complete. In particular,
\(\Pi(X)\) is a Baire space.
\end{lemma}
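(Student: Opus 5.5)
The plan is to show that \(\Pi(X)\) is a closed subset of a complete metric space, and then invoke the Baire category theorem.

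First, I note that \(X\times X^*\) with the norm \(\|(x,x^*)\|_\times=\max\{\|x\|,\|x^*\|\}\) is a Banach space. Indeed, a Cauchy sequence \((x_k,x_k^*)\) for this norm has Cauchy components in \(X\) and in \(X^*\). Both spaces are complete; for \(X^*\) this holds because \(\mathbb C\) is complete. So the components converge, and hence the pair converges in the max norm.

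Next, I claim that \(S_X\times S_{X^*}\) is closed in \(X\times X^*\). The maps \((x,x^*)\mapsto\|x\|\) and \((x,x^*)\mapsto\|x^*\|\) are continuous (indeed \(1\)-Lipschitz) for \(\|\cdot\|_\times\). Therefore \(S_X\times S_{X^*}\) is the intersection of the two preimages of \(\{1\}\), and so it is closed.

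By Lemma~\ref{prop:Pi-closed}, \(\Pi(X)\) is closed in \(S_X\times S_{X^*}\). A closed subset of a closed subset is closed in the ambient space, so \(\Pi(X)\) is closed in the Banach space \(X\times X^*\). Continuity of \(\Phi\) in that lemma is not an issue: it follows from
\[
|x^*(x)-y^*(y)|\le \|x^*-y^*\|\,\|x\|+\|y^*\|\,\|x-y\|,
\]
which is small on the spheres. A closed subset of a complete metric space is complete in the restricted metric, so \((\Pi(X),\|\cdot\|_\times)\) is complete.

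Finally, the Baire category theorem says that every complete metric space is a Baire space, which gives the last assertion. There is no real obstacle in this argument. The only point needing care is to make sure closedness is taken in the full space \(X\times X^*\) and not merely relative to \(S_X\times S_{X^*}\), and the chain of two closed inclusions handles that.
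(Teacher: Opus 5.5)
Your proof is correct and follows essentially the same route as the paper: both use Lemma~\ref{prop:Pi-closed} to see \(\Pi(X)\) as a closed subset of a complete space, and then apply the Baire category theorem. The only cosmetic difference is that the paper asserts completeness of \(S_X\times S_{X^*}\) directly, whereas you show that \(\Pi(X)\) is closed in the Banach space \(X\times X^*\); these amount to the same thing, since both rest on the spheres being closed in their respective Banach spaces.
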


\begin{proof}
Since \(X\) and \(X^*\) are Banach spaces, the unit spheres \(S_X\) and
\(S_{X^*}\) are complete for the induced metrics. Hence
\(S_X\times S_{X^*}\) is complete for \(\|\cdot\|_\times\). By
Proposition~\ref{prop:Pi-closed}, \(\Pi(X)\) is a closed subset of
\(S_X\times S_{X^*}\), and therefore complete. The Baire property follows from
the Baire category theorem.
\end{proof}

  By some elementary computations it can be checked that for every \(n\in\mathbb N_0\) the map \(F_n\) is continuous and Lipschitz on \(\Pi(X)\)  with the Lipschitz constant $2\|T^n\|.$ The details in computations are left to the reader.

For later use, we will need the following definition.
\begin{definition}
For \(T\in\mathcal B(X)\), define
\[
\mathcal U(T):=
\Bigl\{(x,x^*)\in\Pi(X):\{F_n(x,x^*):n\ge 0\}\text{ is dense in }\mathbb C\Bigr\}.
\]
Equivalently,
\[
\mathcal U(T)=
\Bigl\{(x,x^*)\in\Pi(X):\NO(T;x,x^*)\text{ is dense in }\mathbb C\Bigr\}.
\]
\end{definition}

Thus \(\mathcal U(T)\) is precisely the set of universal pairs of \(T\), or
equivalently the set of numerically hypercyclic pairs.

Fix a countable basis \(\mathcal V=\{V_j:j\in\mathbb N\}\) of nonempty open
subsets of \(\mathbb C\). Then
\[
\mathcal U(T)=\bigcap_{j\in\mathbb N}\bigcup_{n\ge 0}F_n^{-1}(V_j).
\]
Indeed, \((x,x^*)\in\Pi(X)\) belongs to \(\mathcal U(T)\) if and only if its
numerical orbit meets every basis element \(V_j\).

\section{Topological numerical transitivity}
In the first subsection below we will study some general properties of topological numerical transitivity, whereas in the second subsection we will completely characterize topologically numerically transitive diagonal operators.

\subsection{A Baire-category criterion, and relations with numerical hypercyclicity }

Let \(T\in\mathcal B(X)\). We consider the family of maps
\[
F_n:\Pi(X)\to\mathbb C,
\qquad
F_n(x,x^*)=x^*(T^n x),
\qquad n\in\mathbb N_0.
\]

\begin{definition}\label{def:TNT}
An operator \(T\in\mathcal B(X)\) is called
\emph{topologically numerically transitive} if for every pair of nonempty open
sets
\(
U\subset\Pi(X)\),
\(V\subset\mathbb C,
\)
there exists \(n\in\mathbb N_0\) such that
\[
F_n(U)\cap V\neq\varnothing.
\]
Equivalently, for every such \(U\) and \(V\), there exist \((x,x^*)\in U\) and
\(n\in\mathbb N_0\) such that
\[
x^*(T^n x)\in V.
\]
\end{definition}

Fix a countable basis \(\mathcal V=\{V_j:j\in\mathbb N\}\) of nonempty open
subsets of \(\mathbb C\).

\begin{proposition}\label{prop:basis-formulation-TNT}
For \(T\in\mathcal B(X)\), the following assertions are equivalent:
\begin{enumerate}
\item[(i)] \(T\) is topologically numerically transitive;
\item[(ii)] for every nonempty open set \(U\subset\Pi(X)\) and every
\(j\in\mathbb N\), there exists \(n\in\mathbb N_0\) such that
\(
F_n(U)\cap V_j\neq\varnothing;
\)
\item[(iii)] for every \(j\in\mathbb N\), the set
\(
\bigcup_{n\ge 0}F_n^{-1}(V_j)
\)
is dense in \(\Pi(X)\).
\end{enumerate}
\end{proposition}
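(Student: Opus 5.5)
I would prove the cycle of implications (i)$\Rightarrow$(ii)$\Rightarrow$(iii)$\Rightarrow$(i). Each step is elementary. The only ingredients are the definition of topological numerical transitivity (Definition~\ref{def:TNT}), the fact that \(\mathcal V=\{V_j\}\) is a basis of nonempty open subsets of \(\mathbb C\), and the continuity of each \(F_n\) on \(\Pi(X)\) noted in the preliminaries.

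For (i)$\Rightarrow$(ii), each \(V_j\) is itself a nonempty open subset of \(\mathbb C\). So Definition~\ref{def:TNT}, applied to the pair \((U,V_j)\), directly gives some \(n\in\mathbb N_0\) with \(F_n(U)\cap V_j\neq\varnothing\).

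For (ii)$\Rightarrow$(iii), fix \(j\) and a nonempty open \(U\subset\Pi(X)\). By (ii) there are \(n\) and \((x,x^*)\in U\) with \(F_n(x,x^*)\in V_j\). Then \((x,x^*)\in U\cap F_n^{-1}(V_j)\subset U\cap\bigcup_{m\ge0}F_m^{-1}(V_j)\). Since \(U\) was an arbitrary nonempty open set, the union \(\bigcup_{n\ge 0}F_n^{-1}(V_j)\) meets every nonempty open set, that is, it is dense in \(\Pi(X)\). Continuity of \(F_n\) is not even needed here, although it shows that this union is open.

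For (iii)$\Rightarrow$(i), let \(U\subset\Pi(X)\) and \(V\subset\mathbb C\) be nonempty open. Because \(\mathcal V\) is a basis and \(V\neq\varnothing\), there is a \(j\) with \(V_j\subset V\). By (iii) the set \(\bigcup_n F_n^{-1}(V_j)\) is dense, so it meets \(U\). This gives \(n\) and \((x,x^*)\in U\) with \(F_n(x,x^*)\in V_j\subset V\), hence \(F_n(U)\cap V\neq\varnothing\).

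There is no real obstacle. The only point needing care is in the last step: we use that a basis of the topology consisting of nonempty sets still contains, for every nonempty open \(V\), some member \(V_j\subset V\). This is simply the definition of a basis, applied at any point of \(V\).
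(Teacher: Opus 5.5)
Your proposal is correct and is essentially the paper's proof. It uses the same two ingredients: choosing a basis element $V_j\subseteq V$, and the observation that $F_n(U)\cap V_j\neq\varnothing$ is equivalent to $U\cap F_n^{-1}(V_j)\neq\varnothing$. The only difference is bookkeeping: you run the cycle (i)$\Rightarrow$(ii)$\Rightarrow$(iii)$\Rightarrow$(i), whereas the paper proves (i)$\Leftrightarrow$(ii) and (ii)$\Leftrightarrow$(iii) separately.
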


\begin{proof}
The implication (i)\(\Rightarrow\)(ii) is immediate.

Assume (ii). Let \(U\subset\Pi(X)\) and \(V\subset\mathbb C\) be nonempty open
sets. Choose \(j\in\mathbb N\) such that \(V_j\subseteq V\). Then
\(F_n(U)\cap V_j\neq\varnothing\) for some \(n\in\mathbb N_0\), hence
\(F_n(U)\cap V\neq\varnothing\). Thus (ii)\(\Rightarrow\)(i).

Fix \(j\in\mathbb N\). For a nonempty open set \(U\subset\Pi(X)\), the
condition
\(
F_n(U)\cap V_j\neq\varnothing
\)
for some \(n\in\mathbb N_0\) is equivalent to
\(
U\cap F_n^{-1}(V_j)\neq\varnothing
\)
for some \(n\in\mathbb N_0\), and hence to
\(
U\cap\bigcup_{n\ge 0}F_n^{-1}(V_j)\neq\varnothing.
\)
Therefore (ii) and (iii) are equivalent.
\end{proof}

Recall that
\(
\mathcal U(T)
=
\Bigl\{(x,x^*)\in\Pi(X):\NO(T;x,x^*)\text{ is dense in }\mathbb C\Bigr\},
\)
and, by the previous section,
\(
\mathcal U(T)
=
\bigcap_{j\in\mathbb N}\bigcup_{n\ge 0}F_n^{-1}(V_j).
\)

\begin{proposition}\label{thm:Baire-criterion}
For \(T\in\mathcal B(X)\), the following assertions are equivalent:
\begin{enumerate}
\item[(i)] \(T\) is topologically numerically transitive;
\item[(ii)] for every \(j\in\mathbb N\), the set
\(
\bigcup_{n\ge 0}F_n^{-1}(V_j)
\)
is open and dense in \(\Pi(X)\);
\item[(iii)] \(\mathcal U(T)\) is a dense \(G_\delta\) subset of \(\Pi(X)\).
\end{enumerate}
\end{proposition}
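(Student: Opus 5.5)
The argument runs as a chain (i)\(\Rightarrow\)(ii)\(\Rightarrow\)(iii)\(\Rightarrow\)(i), using Proposition~\ref{prop:basis-formulation-TNT}, the continuity of the maps \(F_n\), and the fact that \(\Pi(X)\) is a Baire space (Lemma~\ref{prop:Pi-complete-baire}).

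First, for every \(j\) and every \(n\), the set \(F_n^{-1}(V_j)\) is open in \(\Pi(X)\). This holds because \(F_n\) is continuous (indeed Lipschitz with constant \(2\|T^n\|\), as noted in the preliminaries) and \(V_j\) is open in \(\mathbb C\). Hence \(G_j:=\bigcup_{n\ge 0}F_n^{-1}(V_j)\) is always open, independently of any transitivity hypothesis. Together with the equivalence (i)\(\Leftrightarrow\)(iii) of Proposition~\ref{prop:basis-formulation-TNT}, this already gives (i)\(\Leftrightarrow\)(ii): transitivity is equivalent to density of every \(G_j\), and openness is automatic.

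For (ii)\(\Rightarrow\)(iii), I use the identity \(\mathcal U(T)=\bigcap_j G_j\) established in Section~2. This exhibits \(\mathcal U(T)\) as a countable intersection of open sets, so it is a \(G_\delta\). Since \(\Pi(X)\) is a complete metric space and hence a Baire space by Lemma~\ref{prop:Pi-complete-baire}, a countable intersection of dense open subsets is dense. Therefore \(\mathcal U(T)\) is a dense \(G_\delta\).

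For (iii)\(\Rightarrow\)(i), I need only that \(\mathcal U(T)\) is dense. Let \(U\subset\Pi(X)\) and \(V\subset\mathbb C\) be nonempty open sets. Density gives a pair \((x,x^*)\in U\cap\mathcal U(T)\). Its numerical orbit \(\NO(T;x,x^*)\) is dense in \(\mathbb C\), so some \(n\) satisfies \(x^*(T^nx)\in V\), that is, \(F_n(U)\cap V\neq\varnothing\). Alternatively, note that \(\mathcal U(T)\subseteq G_j\) for each \(j\), so each \(G_j\) is dense, and then apply Proposition~\ref{prop:basis-formulation-TNT}.

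There is no real obstacle in this proof. The only points requiring care are the continuity of \(F_n\) on \(\Pi(X)\), which the paper has already recorded, and the Baire property of \(\Pi(X)\), which rests on \(\Pi(X)\) being closed in the complete space \(S_X\times S_{X^*}\) (Lemma~\ref{prop:Pi-closed}). I would briefly justify the continuity of \(F_n\) via the estimate
\[
|x^*(T^nx)-y^*(T^ny)|\le \|x^*-y^*\|\,\|T^n\|+\|T^n\|\,\|x-y\|,
\]
valid on unit spheres.
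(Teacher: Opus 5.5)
Your proof is correct and follows essentially the same route as the paper. The chain is identical: (i)$\Rightarrow$(ii) via Proposition~\ref{prop:basis-formulation-TNT} together with continuity of the $F_n$; (ii)$\Rightarrow$(iii) via the identity $\mathcal U(T)=\bigcap_j\bigcup_{n\ge 0}F_n^{-1}(V_j)$ and the Baire property of $\Pi(X)$; and (iii)$\Rightarrow$(i) by choosing a universal pair in $U$. Your explicit Lipschitz estimate for $F_n$ is a small addition, since the paper leaves that computation to the reader.
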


\begin{proof}
(i)\(\Rightarrow\)(ii). By Proposition~\ref{prop:basis-formulation-TNT}, the
sets
\(
\bigcup_{n\ge 0}F_n^{-1}(V_j)
\)
are dense in \(\Pi(X)\). They are open because each \(F_n\) is continuous.

(ii)\(\Rightarrow\)(iii). Since
\(
\mathcal U(T)
=
\bigcap_{j\in\mathbb N}\bigcup_{n\ge 0}F_n^{-1}(V_j),
\)
the set \(\mathcal U(T)\) is a countable intersection of open dense subsets of
the Baire space \(\Pi(X)\). Hence \(\mathcal U(T)\) is a dense \(G_\delta\)
subset of \(\Pi(X)\).

(iii)\(\Rightarrow\)(i). Let \(U\subset\Pi(X)\) and \(V\subset\mathbb C\) be
nonempty open sets. Choose \((x,x^*)\in U\cap\mathcal U(T)\). Since
\(\NO(T;x,x^*)\) is dense in \(\mathbb C\), there exists \(n\in\mathbb N_0\)
such that
\(
x^*(T^n x)\in V,
\)
that is, \(F_n(x,x^*)\in V\). Hence \(F_n(U)\cap V\neq\varnothing\).
\end{proof}

In particular,
\[
\text{topological numerical transitivity}
\Longrightarrow
\text{numerical hypercyclicity}.
\]

For later use, if \(V\subset\mathbb C\) is nonempty and open, define
\[
\mathcal O_T(V):=\bigcup_{n\ge 0}F_n^{-1}(V)
=
\Bigl\{(x,x^*)\in\Pi(X):\exists n\ge 0\text{ such that }x^*(T^n x)\in V\Bigr\}.
\]
Then Proposition~\ref{prop:basis-formulation-TNT} shows that \(T\) is
topologically numerically transitive if and only if \(\mathcal O_T(V_j)\) is
dense in \(\Pi(X)\) for every basis element \(V_j\), while
Proposition~\ref{thm:Baire-criterion} yields
\(
\mathcal U(T)=\bigcap_{j\in\mathbb N}\mathcal O_T(V_j).
\)

Thus topological numerical transitivity is exactly the condition ensuring that
the family \((F_n)\) has a residual set of universal points in \(\Pi(X)\).

We now compare topological numerical transitivity with the classical notion of
numerical hypercyclicity, and then record general obstruction principles
excluding large classes of operators.

The topological numerical transitivity is a property of the full family
\((F_n)_{n\ge 0}\), and not merely of a single numerical orbit. Thus the
distinction between the two notions is one of existence versus genericity.

\begin{proposition}\label{prop:parallel-formulations}
Let \(T\in\mathcal B(X)\). Then:
\begin{enumerate}
\item[(i)] \(T\) is numerically hypercyclic if and only if
\(
\mathcal U(T)\neq\varnothing;
\)
\item[(ii)] \(T\) is topologically numerically transitive if and only if
\(
\mathcal U(T)\text{ is residual in }\Pi(X);
\)
\item[(iii)] \(T\) is topologically numerically transitive if and only if
\(
\mathcal U(T)\text{ is dense in }\Pi(X).
\)
\end{enumerate}
\end{proposition}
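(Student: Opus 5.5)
The plan is to derive all three items from the definition of \(\mathcal U(T)\) and from Proposition~\ref{thm:Baire-criterion}. No new construction is needed.

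Item (i) is a restatement of the definition. \(T\) is numerically hypercyclic exactly when some \((x,x^*)\in\Pi(X)\) has \(\NO(T;x,x^*)\) dense in \(\mathbb C\), and this says \(\mathcal U(T)\neq\varnothing\).

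For (ii), assume first that \(T\) is topologically numerically transitive. By Proposition~\ref{thm:Baire-criterion} (i)\(\Rightarrow\)(ii), each \(\mathcal O_T(V_j)\) is open and dense. Since \(\mathcal U(T)=\bigcap_j\mathcal O_T(V_j)\), the set \(\mathcal U(T)\) is a countable intersection of open dense sets and hence residual. Conversely, suppose \(\mathcal U(T)\) is residual. Then \(\mathcal U(T)\) contains a countable intersection of open dense subsets of \(\Pi(X)\). This intersection is dense because \(\Pi(X)\) is a Baire space (Lemma~\ref{prop:Pi-complete-baire}), so \(\mathcal U(T)\) is dense. The remaining step is the density argument, which is the same as the converse in (iii) below.

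For (iii), the forward implication follows from Proposition~\ref{thm:Baire-criterion} (i)\(\Rightarrow\)(iii). For the converse, I will repeat the argument used for (iii)\(\Rightarrow\)(i) there, which needs only density and not the \(G_\delta\) property. Let \(U\subset\Pi(X)\) and \(V\subset\mathbb C\) be nonempty open sets. By density, pick \((x,x^*)\in U\cap\mathcal U(T)\). Since \(\NO(T;x,x^*)\) is dense in \(\mathbb C\), there is some \(n\) with \(F_n(x,x^*)\in V\), and therefore \(F_n(U)\cap V\neq\varnothing\). This completes (iii) and also the converse in (ii).

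There is no real obstacle here. The one point to check is that the converse of (ii) uses the Baire property of \(\Pi(X)\) to pass from "residual" to "dense". Without completeness, a residual set need not be dense, which is why Lemma~\ref{prop:Pi-complete-baire} is invoked at that step.
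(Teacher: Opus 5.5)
Your proposal is correct and follows essentially the same route as the paper. Item (i) is by definition, the forward directions come from Proposition~\ref{thm:Baire-criterion}, and the converses use the density argument from (iii)$\Rightarrow$(i) there, with the Baire property of $\Pi(X)$ turning ``residual'' into ``dense''. One small correction to your closing remark: the forward implication in (iii) also relies on the Baire property, which is hidden inside Proposition~\ref{thm:Baire-criterion}, so the converse of (ii) is not the only place where completeness of $\Pi(X)$ is used.
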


\begin{proof}
Statement (i) is just the definition of numerical hypercyclicity. Statement
(ii) is exactly Theorem~\ref{thm:Baire-criterion}. Since \(\Pi(X)\) is a Baire
space, every residual subset is dense, so (ii) implies (iii). Conversely, if
\(\mathcal U(T)\) is dense in \(\Pi(X)\), then for every nonempty open sets
\(U\subset\Pi(X)\) and \(V\subset\mathbb C\), one can choose
\(
(x,x^*)\in U\cap\mathcal U(T),
\)
and the density of \(\NO(T;x,x^*)\) yields \(n\in\mathbb N_0\) such that
\(x^*(T^n x)\in V\). Hence \(F_n(U)\cap V\neq\varnothing\), so \(T\) is
topologically numerically transitive. Theorem~\ref{thm:Baire-criterion} then
gives the residuality of \(\mathcal U(T)\). Thus (ii) and (iii) are equivalent.
\end{proof}

Thus numerical hypercyclicity is an existence property, whereas topological
numerical transitivity is the corresponding density, equivalently residuality,
property for universal pairs.

It is also natural to compare the present notion with weak and strong numerical
hypercyclicity in the sense of Shkarin. These notions are defined through
similarity, whereas topological numerical transitivity is formulated through
the family \((F_n)\) on the fixed state space \(\Pi(X)\). Accordingly, they are
of different nature. Beyond the implication above, we do not claim here any
general relation between topological numerical transitivity and weak or strong
numerical hypercyclicity. In fact, in examples in the next section we will construct strongly numerically hypercyclic operator that is not topologically numerically transitive. We recall that an operator $T$ is said to be strongly numerically hypercyclic if every operator similar to $T$ is numerically hypercyclic. 

The converse implication is the first natural problem.

\begin{problem}\label{prob:NH-to-TNT}
Does numerical hypercyclicity imply topological numerical transitivity?
\end{problem}

Equivalently, if \(\mathcal U(T)\neq\varnothing\), must \(\mathcal U(T)\) be
dense in \(\Pi(X)\)? By Proposition~\ref{prop:parallel-formulations}, this is
exactly the question of whether one universal pair forces generic abundance of
universal pairs.
In the next section we will give negative answer to this question. In particular, we will construct concrete numerically hypercyclic and strongly numerically hhypercyclic operators that are not topologically numerically transitive.

For later use, we also record the following immediate positive criterion.

\begin{proposition}\label{prop:local-criterion}
Let \(T\in\mathcal B(X)\). If \(\mathcal U(T)\) is dense in \(\Pi(X)\), then
\(T\) is topologically numerically transitive.
\end{proposition}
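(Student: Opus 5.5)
The statement is a direct consequence of the definitions, and I would prove it straight from Definition~\ref{def:TNT}. This bypasses Proposition~\ref{thm:Baire-criterion} (which could also be invoked through Proposition~\ref{prop:parallel-formulations}(iii)).

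Fix nonempty open sets \(U\subset\Pi(X)\) and \(V\subset\mathbb C\).

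\emph{Step 1.} Because \(\mathcal U(T)\) is dense in \(\Pi(X)\) and \(U\) is a nonempty open subset of \(\Pi(X)\), there is a pair \((x,x^*)\in U\cap\mathcal U(T)\).

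\emph{Step 2.} By the definition of \(\mathcal U(T)\), the numerical orbit \(\NO(T;x,x^*)=\{x^*(T^nx):n\ge 0\}\) is dense in \(\mathbb C\). Since \(V\) is nonempty and open, some \(n\in\mathbb N_0\) satisfies \(x^*(T^nx)\in V\), that is, \(F_n(x,x^*)\in V\).

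\emph{Step 3.} As \((x,x^*)\in U\), this gives \(F_n(U)\cap V\neq\varnothing\). Since \(U\) and \(V\) were arbitrary, \(T\) is topologically numerically transitive in the sense of Definition~\ref{def:TNT}.

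No step is a genuine obstacle. The one point to check is that density in Step 1 is taken in the relative topology of \(\Pi(X)\) induced by \(\|\cdot\|_\times\), which is the same topology in which \(U\) is open, so the intersection \(U\cap\mathcal U(T)\) is indeed nonempty.

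If a stronger conclusion is wanted, Proposition~\ref{thm:Baire-criterion} then shows that \(\mathcal U(T)\) is in fact a dense \(G_\delta\) subset of \(\Pi(X)\).
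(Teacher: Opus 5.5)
Your proof is correct and is essentially the paper's own argument. The paper simply cites Proposition~\ref{prop:parallel-formulations}(iii), and the proof of the relevant direction there is exactly your Steps 1--3: pick \((x,x^*)\in U\cap\mathcal U(T)\), then use the density of \(\NO(T;x,x^*)\) in \(\mathbb C\) to land in \(V\).
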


\begin{proof}
This is Proposition~\ref{prop:parallel-formulations}(iii).
\end{proof}

\subsection{Diagonal operators: finite-dimensional and \texorpdfstring{$\ell^p$}{lp} cases}

Diagonal operators provide a natural class in which numerical dynamics reduce to
exponential sums with nonnegative coefficients. This yields complete
characterizations of both numerical hypercyclicity and topological numerical
transitivity in two settings: finite-dimensional Hilbert spaces and
\(\ell^p(\mathbb N)\) for \(1<p<\infty\).

We begin with the finite-dimensional Hilbert case. Let \(d\ge 2\), and let
\(
D=\operatorname{diag}(\lambda_1,\lambda_2,\dots,\lambda_d)\in\mathcal B(\mathbb C^d),
\)
where \(\lambda_1,\dots,\lambda_d\in\mathbb C\). Denote by
\(
\Delta_d:=\Bigl\{a=(a_1,\dots,a_d)\in[0,1]^d:\ \sum_{j=1}^d a_j=1\Bigr\}
\)
the standard simplex, and for \(a\in\Delta_d\) define
\(
\mathcal O_D(a):=\{1\}\cup\Bigl\{\sum_{j=1}^d a_j\lambda_j^n:\ n\ge 1\Bigr\}.
\)
Set
\(
G_D:=\Bigl\{a\in\Delta_d:\ \mathcal O_D(a)\text{ is dense in }\mathbb C\Bigr\}.
\)

\begin{theorem}\label{thm:finite-diagonal-characterization}
Let \(D=\operatorname{diag}(\lambda_1,\dots,\lambda_d)\in\mathcal B(\mathbb C^d)\).
Then:
\begin{enumerate}
\item[(i)] \(D\) is numerically hypercyclic if and only if \(G_D\neq\varnothing\).
\item[(ii)] \(D\) is topologically numerically transitive if and only if
\(G_D\) is dense in \(\Delta_d\).
\end{enumerate}
\end{theorem}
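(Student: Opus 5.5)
In $\mathbb C^d$ with the Euclidean norm the dual is identified with $\mathbb C^d$ itself. The norming functional of a unit vector $x$ is unique, namely $x^*=\langle\cdot,x\rangle$, so $\Pi(\mathbb C^d)=\{(x,\langle\cdot,x\rangle):\|x\|=1\}$. The map $x\mapsto(x,\langle\cdot,x\rangle)$ is then a homeomorphism from the unit sphere $S$ onto $\Pi(\mathbb C^d)$: it is isometric up to the factor coming from the $\max$-norm, since $\|x^*-y^*\|=\|x-y\|$. For this pair we get
\[
F_n(x,x^*)=\langle D^n x,x\rangle=\sum_{j=1}^d |x_j|^2\lambda_j^n .
\]
Hence, for $a=q(x):=(|x_1|^2,\dots,|x_d|^2)\in\Delta_d$, the numerical orbit $\NO(D;x,x^*)$ equals $\mathcal O_D(q(x))$, with the term $n=0$ giving the value $1$. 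The whole proof reduces to properties of the map $q:S\to\Delta_d$, which is continuous and surjective (take $x_j=\sqrt{a_j}$). The key identity is $\mathcal U(D)\cong q^{-1}(G_D)$.

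For (i) I use Proposition~\ref{prop:parallel-formulations}(i): $D$ is numerically hypercyclic iff $\mathcal U(D)\neq\varnothing$. Since $q$ is surjective, this holds iff $q^{-1}(G_D)\neq\varnothing$, i.e. iff $G_D\neq\varnothing$.

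For (ii) I use Proposition~\ref{prop:parallel-formulations}(iii): $D$ is topologically numerically transitive iff $\mathcal U(D)$ is dense in $\Pi(\mathbb C^d)$, i.e. iff $q^{-1}(G_D)$ is dense in $S$. If $q^{-1}(G_D)$ is dense in $S$, then $G_D=q(q^{-1}(G_D))$ is dense in $q(S)=\Delta_d$, because continuous surjections map dense sets onto dense sets.

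The converse is the only step needing care: density of $G_D$ must give density of $q^{-1}(G_D)$, which requires $q$ to be open. I would show this by an explicit local lifting. Fix $x\in S$ and $\varepsilon>0$, and write $x_j=r_je^{i\theta_j}$. Given $a\in\Delta_d$ close to $q(x)$, set $y_j=\sqrt{a_j}\,e^{i\theta_j}$, keeping the phases of $x$. Then $\|y\|=1$, $q(y)=a$, and $|y_j-x_j|=|\sqrt{a_j}-r_j|$. This tends to $0$ as $a\to q(x)$ by uniform continuity of $\sqrt{\cdot}$ on $[0,1]$, even when some $r_j=0$. So $q$ maps every neighbourhood of $x$ onto a neighbourhood of $q(x)$ in $\Delta_d$. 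Therefore any nonempty open $W\subset S$ has $q(W)$ containing a nonempty relatively open subset of $\Delta_d$, which meets $G_D$; a point of that intersection lifts to a point of $W\cap q^{-1}(G_D)$. This establishes density and completes (ii).
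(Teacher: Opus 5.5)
Your proof is correct and follows essentially the same route as the paper: identify $\Pi(\mathbb C^d)$ with the unit sphere, reduce the numerical orbit to the coefficient vector $q(x)=(|x_j|^2)_j$, and prove (ii) using the phase-preserving lift $y_j=\sqrt{a_j}\,e^{i\theta_j}$ in one direction and continuity plus surjectivity of $q$ in the other. The only difference is presentation: where you argue via openness of $q$ and uniform continuity of $\sqrt{\cdot}$, the paper uses the explicit estimates $\|x-y\|_2^2\le\|a-b\|_1$ and $\|a-b\|_1\le 2\|x-y\|_2$.
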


\begin{proof}
Since \(\mathbb C^d\) is a Hilbert space, \(\Pi(\mathbb C^d)\) is naturally
identified with the unit sphere \(S_{\mathbb C^d}\): for each
\(x\in S_{\mathbb C^d}\), the unique state functional associated with \(x\) is
\(
x^*(y)=\langle y,x\rangle.
\)
Thus, for \(x=(x_1,\dots,x_d)\in S_{\mathbb C^d}\),
\(
\langle D^0x,x\rangle=1,\)
 \(\langle D^n x,x\rangle=\sum_{j=1}^d |x_j|^2\lambda_j^n
\quad (n\ge 1).
\)
Hence the numerical orbit of \(x\) depends only on the coefficient vector
\(
a(x):=(|x_1|^2,\dots,|x_d|^2)\in\Delta_d,
\)
and one has
\[
x\in\mathcal U(D)\iff a(x)\in G_D.
\]
This proves (i).

Assume now that \(G_D\) is dense in \(\Delta_d\). Let
\(y=(y_1,\dots,y_d)\in S_{\mathbb C^d}\) and \(\varepsilon>0\). Set
\[
b:=a(y)=(|y_1|^2,\dots,|y_d|^2)\in\Delta_d,
\]
and choose \(a=(a_1,\dots,a_d)\in G_D\) such that
\(
\|a-b\|_1<\varepsilon^2.
\)
Define \(x=(x_1,\dots,x_d)\in\mathbb C^d\) by
\[
x_j=
\begin{cases}
\sqrt{a_j}\,\dfrac{y_j}{|y_j|},& y_j\neq 0,\\[2mm]
\sqrt{a_j},& y_j=0.
\end{cases}
\]
Then \(x\in S_{\mathbb C^d}\), \(a(x)=a\in G_D\), and hence
\(x\in\mathcal U(D)\). Moreover,
\[
\|x-y\|_2^2
=
\sum_{j=1}^d (|x_j|-|y_j|)^2
\le
\sum_{j=1}^d \bigl||x_j|^2-|y_j|^2\bigr|
=
\|a-b\|_1
<
\varepsilon^2.
\]
Thus \(\mathcal U(D)\) is dense in \(S_{\mathbb C^d}\), hence dense in
\(\Pi(\mathbb C^d)\). By Proposition~\ref{prop:parallel-formulations}(iii),
\(D\) is topologically numerically transitive.

Conversely, assume that \(D\) is topologically numerically transitive. Then
\(\mathcal U(D)\) is dense in \(S_{\mathbb C^d}\). Let
\(b=(b_1,\dots,b_d)\in\Delta_d\) and \(\varepsilon>0\), and set
\(
y=(\sqrt{b_1},\dots,\sqrt{b_d})\in S_{\mathbb C^d}.
\)
Choose \(x\in\mathcal U(D)\) such that
\[
\|x-y\|_2<\frac{\varepsilon}{2},
\]
and define
\(
a:=a(x)=(|x_1|^2,\dots,|x_d|^2)\in G_D.
\)
Then
\[
\|a-b\|_1
=
\sum_{j=1}^d \bigl||x_j|^2-|y_j|^2\bigr|
\le
\Bigl(\sum_{j=1}^d (|x_j|-|y_j|)^2\Bigr)^{1/2}
\Bigl(\sum_{j=1}^d (|x_j|+|y_j|)^2\Bigr)^{1/2}.
\]
Since \(\|x\|_2=\|y\|_2=1\), one has
\(
\sum_{j=1}^d (|x_j|+|y_j|)^2\le 4.
\)
Hence
\[
\|a-b\|_1
\le
2\,\||x|-|y|\|_2
\le
2\,\|x-y\|_2
<
\varepsilon.
\]
Thus \(G_D\) is dense in \(\Delta_d\), and the proof is complete.
\end{proof}

In the finite-dimensional diagonal Hilbert setting, numerical hypercyclicity is
therefore an existence property on \(\Delta_d\), whereas topological numerical
transitivity is the corresponding density property. We record three examples.

\begin{example}[A diagonal numerically hypercyclic operator]
Choose \(\lambda_1,\lambda_2\in\mathbb C\) such that
\(
\{\lambda_1^n+\lambda_2^n:n\ge 0\}
\)
is dense in \(\mathbb C\), and set
\(
D=\operatorname{diag}(\lambda_1,\lambda_2)\in\mathcal B(\mathbb C^2).
\)
Then
\(
a=\Bigl(\frac12,\frac12\Bigr)\in G_D,
\)
and hence \(D\) is numerically hypercyclic.
\end{example}

\begin{example}\label{TNT-3}(A diagonal topologically numerically transitive operator)
Let \(R>1\) and let \(u,w,z\in\mathbb T\) be independent. Consider
\(
D=\operatorname{diag}(Ru,Rw,Rz)\in\mathcal B(\mathbb C^3).
\)
By Shkarin's universality argument for three independent phases, the set
\[
\Bigl\{(a,b)\in\mathbb R^2:\ a>0,\ b>0,\ a+b<1,\
\{R^n(au^n+bw^n+(1-a-b)z^n):n\ge 0\}\text{ is dense in }\mathbb C\Bigr\}
\]
is a dense \(G_\delta\) subset of
\(
A=\{(a,b)\in\mathbb R^2:\ a>0,\ b>0,\ a+b<1\}.
\)
Therefore \(G_D\) is dense in \(\Delta_3\), and
Theorem~\ref{thm:finite-diagonal-characterization} implies that \(D\) is
topologically numerically transitive.
\end{example}

\begin{example}\label{NH-nonTNT}(A diagonal operator which is SNH but not TNT)
Fix \(1<R<M\), let \(u,w,z\in\mathbb T\) be independent, and define
\(
D=\operatorname{diag}(Ru,Rw,Rz,M)\in\mathcal B(\mathbb C^4).
\)
Choose \(a,b>0\) with \(a+b<1\) such that
\(
\{R^n(au^n+bw^n+(1-a-b)z^n):n\ge 0\}
\)
is dense in \(\mathbb C\). Then
\(
\alpha=(a,b,1-a-b,0)\in G_D,
\)
so \(D\) is numerically hypercyclic. In fact, by \cite[Theorem~1.10]{Shkarin} it is not hard to deduce that $D$ is even strongly numerically hypercyclic

If \(\beta=(\beta_1,\beta_2,\beta_3,\beta_4)\in\Delta_4\) satisfies
\(\beta_4>0\), then
\[
\sum_{j=1}^4\beta_j\lambda_j^n
=
R^n(\beta_1u^n+\beta_2w^n+\beta_3z^n)+\beta_4M^n.
\]
After division by \(M^n\),
\[
M^{-n}\sum_{j=1}^4\beta_j\lambda_j^n
=
\Bigl(\frac{R}{M}\Bigr)^n(\beta_1u^n+\beta_2w^n+\beta_3z^n)+\beta_4
\longrightarrow \beta_4.
\]
Hence the sequence cannot be dense in \(\mathbb C\). Thus every element of
\(G_D\) must satisfy \(\beta_4=0\), so
\(
G_D\subseteq\{\beta\in\Delta_4:\beta_4=0\},
\)
which is a proper closed subset of \(\Delta_4\). Therefore \(D\) is strongly numerically
hypercyclic but not topologically numerically transitive.
\end{example}

We now turn to diagonal operators on \(\ell^p(\mathbb N)\) in the reflexive
range \(1<p<\infty\). The same mechanism survives, with \(\Delta_d\) replaced by
the probability simplex of \(\ell^1\). Let
\[
D=\operatorname{diag}(\lambda_1,\lambda_2,\lambda_3,\dots)\in\mathcal B(\ell^p),
\qquad
\sup_{n\ge 1}|\lambda_n|<\infty.
\]
Denote by
\(
\Delta:=\Bigl\{a=(a_n)_{n\ge1}\in \ell^1:\ a_n\ge 0,\ \sum_{n\ge1}a_n=1\Bigr\}
\)
the probability simplex, and for \(a\in\Delta\) define
\(
\mathcal O_D(a):=\{1\}\cup\Bigl\{\sum_{n\ge1}a_n\lambda_n^m:\ m\ge 1\Bigr\}.
\)
Set
\(
G_D:=\Bigl\{a\in\Delta:\ \mathcal O_D(a)\text{ is dense in }\mathbb C\Bigr\}.
\)

\begin{theorem}\label{thm:general-diagonal-lp}
Let \(1<p<\infty\), let \(X=\ell^p(\mathbb N)\), and let
\(
D=\operatorname{diag}(\lambda_1,\lambda_2,\lambda_3,\dots)\in\mathcal B(X).
\)
Then:
\begin{enumerate}
\item[(i)] \(D\) is numerically hypercyclic if and only if \(G_D\neq\varnothing\).
\item[(ii)] \(D\) is topologically numerically transitive if and only if
\(G_D\) is dense in \(\Delta\).
\end{enumerate}
\end{theorem}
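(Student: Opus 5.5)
We follow the pattern of the proof of Theorem~\ref{thm:finite-diagonal-characterization}. The difference is that \(\ell^p\) is not a Hilbert space, so the state space has to be described explicitly. For \(1<p<\infty\) the space \(\ell^p\) is smooth and strictly convex. Hence for every \(x\in S_{\ell^p}\) there is exactly one norming functional,
\[
x^*=J(x):=(|x_n|^{p-1}\overline{\operatorname{sgn}(x_n)})_{n\ge1}\in S_{\ell^q},\qquad \tfrac1p+\tfrac1q=1 .
\]
Consequently \(\Pi(\ell^p)=\{(x,J(x)):x\in S_{\ell^p}\}\), and the projection \((x,x^*)\mapsto x\) is a bijection onto \(S_{\ell^p}\). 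For such a pair,
\[
x^*(D^m x)=\sum_{n\ge1}|x_n|^p\lambda_n^m .
\]
So the numerical orbit depends only on \(a(x):=(|x_n|^p)_n\in\Delta\), and it equals \(\mathcal O_D(a(x))\). The case \(m=0\) gives the value \(1\). The series converge absolutely because \(\sup|\lambda_n|<\infty\) and \(a\in\ell^1\). This proves (i): \((x,x^*)\in\mathcal U(D)\) if and only if \(a(x)\in G_D\), and the map \(a\) sends \(S_{\ell^p}\) onto \(\Delta\) (take \(x_n=a_n^{1/p}\)).

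For (ii) we use Proposition~\ref{prop:parallel-formulations}(iii), which says that topological numerical transitivity is equivalent to density of \(\mathcal U(D)\) in \(\Pi(\ell^p)\). The main obstacle is that the topology on \(\Pi(\ell^p)\) is the product topology on pairs \((x,x^*)\), not just the topology of \(x\). We therefore need to know that \(J\colon S_{\ell^p}\to S_{\ell^q}\) is norm-continuous. This holds because \(\ell^p\) is uniformly smooth, so its norm is Fréchet differentiable and the duality map is continuous on the sphere. One can also check it directly: the coordinatewise map \(t\mapsto |t|^{p-1}\overline{\operatorname{sgn} t}\) is continuous, so the Nemytskii-type map \(J\) is continuous from \(\ell^p\) to \(\ell^q\), which follows by dominated convergence and a Vitali-type argument. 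With this in hand, \(x\mapsto(x,J(x))\) is a homeomorphism from \(S_{\ell^p}\) onto \(\Pi(\ell^p)\), and density of \(\mathcal U(D)\) in \(\Pi(\ell^p)\) is equivalent to density of \(\{x\in S_{\ell^p}: a(x)\in G_D\}\) in \(S_{\ell^p}\).

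It remains to show that \(a\colon S_{\ell^p}\to\Delta\) is continuous, surjective and open in the sense needed.

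\emph{Continuity.} For any \(s,t\ge0\) we have \(|s^p-t^p|\le p\,|s-t|\,(s^{p-1}+t^{p-1})\). Applying Hölder's inequality gives
\[
\|a(x)-a(y)\|_1\le 2p\,\|x-y\|_p .
\]

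\emph{Forward direction.} Suppose \(D\) is topologically numerically transitive and take \(b\in\Delta\). Put \(y=(b_n^{1/p})_n\) and approximate \(y\) by some \(x\in\mathcal U(D)\). Then \(a(x)\in G_D\) is close to \(b\), so \(G_D\) is dense in \(\Delta\).

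\emph{Converse direction.} Suppose \(G_D\) is dense in \(\Delta\), and let \(y\in S_{\ell^p}\) and \(\varepsilon>0\). Pick \(a\in G_D\) with \(\|a-a(y)\|_1<\varepsilon^p\), and define \(x_n=a_n^{1/p}\operatorname{sgn}(y_n)\), with the convention \(\operatorname{sgn}(0)=1\). Then \(a(x)=a\), so \(x\) is universal. The elementary inequality \(|s-t|^p\le|s^p-t^p|\) for \(s,t\ge0\) gives
\[
\|x-y\|_p^p=\sum_n\bigl||x_n|-|y_n|\bigr|^p\le\|a-a(y)\|_1<\varepsilon^p .
\]
Hence \(\mathcal U(D)\) is dense in \(S_{\ell^p}\), therefore dense in \(\Pi(\ell^p)\) by the homeomorphism above, and \(D\) is topologically numerically transitive by Proposition~\ref{prop:parallel-formulations}(iii).

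The only genuinely delicate point is the continuity of \(J\) on the sphere. Everything else mirrors the finite-dimensional proof.
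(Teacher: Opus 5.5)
Your proof is correct and follows the paper's own argument: $\Pi(\ell^p)$ is identified with $S_{\ell^p}$ through the unique norming functional, the numerical orbit reduces to $\mathcal O_D(a(x))$ with $a(x)=(|x_n|^p)_n$, the converse direction uses the phase-preserving choice $x_n=a_n^{1/p}\operatorname{sgn}(y_n)$, and the forward direction uses the estimate $\|a(x)-a(y)\|_1\le 2p\|x-y\|_p$. Your one addition is to prove the norm-continuity of the duality map $J\colon S_{\ell^p}\to S_{\ell^q}$, which the paper uses without comment when it passes from density of $\mathcal U(D)$ in $S_{\ell^p}$ to density in $\Pi(\ell^p)$.
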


\begin{proof}
Let \(q\) be the conjugate exponent of \(p\). Since \(1<p<\infty\), every
\(x=(x_n)_{n\ge1}\in S_{\ell^p}\) has a unique norming functional
\(J_p(x)\in S_{\ell^q}\) given by
\[
J_p(x)_n=\overline{x_n}|x_n|^{p-2},
\qquad n\ge 1.
\]
Hence \(\Pi(X)\) is naturally identified with \(S_X\).

For \(x\in S_X\), define
\[
a(x):=(|x_n|^p)_{n\ge1}\in\Delta.
\]
Then
\[
J_p(x)(D^0x)=1,
\]
and for every \(m\ge 1\),
\[
J_p(x)(D^m x)
=
\sum_{n\ge1}\overline{x_n}|x_n|^{p-2}\lambda_n^m x_n
=
\sum_{n\ge1}|x_n|^p\lambda_n^m
=
\sum_{n\ge1}a(x)_n\lambda_n^m.
\]
Thus the numerical orbit of \(x\) is exactly \(\mathcal O_D(a(x))\), and
\[
x\in\mathcal U(D)\iff a(x)\in G_D.
\]
This proves (i).

Assume first that \(G_D\) is dense in \(\Delta\). Let \(y\in S_X\) and
\(\varepsilon>0\), and set
\[
b:=a(y)\in\Delta.
\]
Choose \(a\in G_D\) such that
\[
\|a-b\|_1<\varepsilon^p,
\]
and define \(x\in X\) by
\[
x_n=
\begin{cases}
a_n^{1/p}\,\dfrac{y_n}{|y_n|},& y_n\neq 0,\\[2mm]
a_n^{1/p},& y_n=0.
\end{cases}
\]
Then \(x\in S_X\), \(a(x)=a\in G_D\), and hence \(x\in\mathcal U(D)\).
Moreover,
\[
\|x-y\|_p^p
=
\sum_{n\ge1}\bigl||x_n|-|y_n|\bigr|^p
\le
\sum_{n\ge1}\bigl||x_n|^p-|y_n|^p\bigr|
=
\|a-b\|_1
<
\varepsilon^p.
\]
Thus \(\mathcal U(D)\) is dense in \(S_X\), hence dense in \(\Pi(X)\), and
Proposition~\ref{prop:parallel-formulations}(iii) yields topological numerical
transitivity.

Conversely, assume that \(D\) is topologically numerically transitive. Then
\(\mathcal U(D)\) is dense in \(S_X\). Let \(b\in\Delta\) and \(\varepsilon>0\),
and define
\[
y=(b_n^{1/p})_{n\ge1}\in S_X.
\]
Choose \(x\in\mathcal U(D)\) such that
\[
\|x-y\|_p<\frac{\varepsilon}{2p}.
\]
Set \(a:=a(x)\in G_D\). For nonnegative numbers \(s,t\),
\[
|s^p-t^p|\le p(s^{p-1}+t^{p-1})|s-t|.
\]
Applying this coordinatewise and summing, we obtain
\[
\|a-b\|_1
\le
p\sum_{n\ge1}\bigl(|x_n|^{p-1}+|y_n|^{p-1}\bigr)\bigl||x_n|-|y_n|\bigr|.
\]
By Hölder's inequality and \(\|x\|_p=\|y\|_p=1\),
\[
\|a-b\|_1
\le
2p\,\||x|-|y|\|_p
\le
2p\,\|x-y\|_p
<
\varepsilon.
\]
Thus \(G_D\) is dense in \(\Delta\).
\end{proof}

Thus, in the diagonal \(\ell^p\)-setting, numerical hypercyclicity is again an
existence property on the coefficient simplex, whereas topological numerical
transitivity is the corresponding density property.

We record three examples.

\begin{example}[A diagonal numerically hypercyclic operator]
Choose \(\lambda_1,\lambda_2\in\mathbb C\) such that
\[
\{\lambda_1^m+\lambda_2^m:m\ge 0\}
\]
is dense in \(\mathbb C\), and set
\[
D=\operatorname{diag}(\lambda_1,\lambda_2,0,0,\dots)\in\mathcal B(\ell^p).
\]
Then
\[
a=\Bigl(\frac12,\frac12,0,0,\dots\Bigr)\in G_D,
\]
and \(D\) is numerically hypercyclic.
\end{example}

\begin{example}[A diagonal topologically numerically transitive operator]
Let \(R>1\) and let \(u,w,z\in\mathbb T\) be independent. Consider
\[
D=\operatorname{diag}(Ru,Rw,Rz,Rz,Rz,\dots)\in\mathcal B(\ell^p).
\]
By similar arguments as in Example \ref{TNT-3} it can be checked that \(G_D\) is dense in \(\Delta\), hence
Theorem~\ref{thm:general-diagonal-lp} implies that \(D\) is topologically
numerically transitive.
\end{example}

\begin{example}\label{NHvsTNT}[A diagonal operator which is NH but not TNT]
Fix \(1<R<M\), let \(u,w,z\in\mathbb T\) be independent, and define
\[
D=\operatorname{diag}(Ru,Rw,Rz,M,0,0,\dots)\in\mathcal B(\ell^p).
\]
By similar arguments as in Example \ref{NH-nonTNT}  it can be deduced that every element of
\(G_D\) must satisfy \(\beta_4=0\), so
\[
G_D\subseteq\{\beta\in\Delta:\beta_4=0\},
\]
which is a proper closed subset of \(\Delta\). Therefore \(D\) is numerically
hypercyclic but not topologically numerically transitive.
\end{example}

The finite- and infinite-dimensional diagonal cases are therefore governed by
the same principle: numerical hypercyclicity corresponds to the existence of a
good coefficient distribution, whereas topological numerical transitivity
corresponds to density of such distributions in the ambient simplex. The
endpoint cases \(p=1\) and \(p=\infty\) require different methods and are left
open here.
\section{Generalized numerical and topological C*-transitivity in Banach algebras}
	Let $\mathcal{A}$ be a Banach algebra and $T \in \mathcal{B}(\mathcal{A})$.
For each $n\in\N$, we let
\[
\Theta_n:\mathcal A\times\mathcal A\longrightarrow \mathcal A
\]
be given by
\[
\Theta_n(a,b):=a\,T^n(b).
\]
Since the multiplication is jointly continuous on $\mathcal A$ and $T^n$ is bounded, it follows that $\theta_n$ is continuous for each $n\in\N$ when $\mathcal A\times\mathcal A$ is equipped with the product topology.

We will say that $T$ is \emph{generalized numerically transitive} if, for every non-empty open subsets $\widetilde O\subseteq \mathcal A\times\mathcal A$ and $O\subseteq\mathcal A$, there exists some $n\in\N$ such that
\[
\Theta_n(\widetilde O)\cap O\neq\varnothing.
\]

In the sequel, $H$ will denote a separable Hilbert space with an orthonormal basis
$
\{e_j\}_{j\in \mathbb{Z}},
$
and for each $m\in\N$ we let $P_m$ be the orthogonal projection onto
$
Span\{e_{-m},\ldots,e_m\}.
$
Further, $(B_0 (H),\|\cdot\|)$ will denote the $C^*$-algebra of compact operators on $H$ equipped with the operator norm.
Moreover, for each $p$ with $1\leq p<\infty$, we let $B_p(H)$ denote the corresponding Schatten $p$-ideal, equipped with the Schatten $p$-norm $\Vert\cdot\Vert_p$. 

We wish to show that every hypercyclic operator on $B_0 (H)$ is
generalized numerically transitive. For that purpose, we first need
the following auxiliary lemma.

\begin{lemma}\label{lem:finite-dimensional-perturbation}
	Let $X$ be a Banach space and let $X_0$ be a finite-dimensional
	subspace of $X$. Suppose that $T\in\mathcal{B}(X,X_0)$. Then there
	exist a finite-dimensional subspace $X_1\subseteq X$, with
	$\dim X_1=\dim X_0$, and a bounded projection $P:X\to X_1$ such that,
	for every $\varepsilon>0$, there exist operators
	$\widetilde{T}_{\varepsilon}\in\mathcal{B}(X_1,X_0)$ and
	$\widetilde{S}_{\varepsilon}\in\mathcal{B}(X_0,X_1)$ satisfying
	\[
	\left\|T-\widetilde{T}_{\varepsilon}P\right\|<\varepsilon,
	\qquad
	\widetilde{T}_{\varepsilon}\widetilde{S}_{\varepsilon}=I_{X_0}.
	\]
\end{lemma}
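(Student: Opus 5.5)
The idea is to show that $T$ factors \emph{exactly} through a $k$-dimensional complemented subspace $X_1$, where $k:=\dim X_0$, and then to perturb the restricted map slightly so that it becomes invertible. The only freedom that depends on $\varepsilon$ is this perturbation. So $X_1$ and $P$ are chosen once, independently of $\varepsilon$, as the statement requires.

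\textbf{Step 1: exact factorization through a $k$-dimensional complemented subspace.} Fix a basis $y_1,\dots,y_k$ of $X_0$. Then $Tx=\sum_{i=1}^k f_i(x)y_i$ for some $f_i\in X^*$. Let $F:=\Span\{f_1,\dots,f_k\}\subseteq X^*$, let $r:=\dim F\le k$, and choose a basis $g_1,\dots,g_r$ of $F$. We now pick $x_1,\dots,x_r\in X$ with $g_j(x_l)=\delta_{jl}$; this is possible because the $g_j$ are linearly independent. The subspace $N:=\bigcap_{j\le r}\ker g_j$ has codimension $r$. Since $X\supseteq X_0$ has dimension at least $k$, we have $\dim N\ge k-r$. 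So we may choose linearly independent $x_{r+1},\dots,x_k\in N$. Set $X_1:=\Span\{x_1,\dots,x_k\}$, which is $k$-dimensional. Extend the system by Hahn--Banach functionals $g_{r+1},\dots,g_k\in X^*$ so that $(x_l,g_j)_{j,l\le k}$ is biorthogonal; this is done by first taking the coordinate functionals on $X_1$ and then correcting by the $g_j$, $j\le r$. Define $Px:=\sum_{j=1}^k g_j(x)x_j$. This is a bounded projection onto $X_1$, and $x-Px\in N$. Since every $f_i$ lies in $F$, each $f_i$ vanishes on $N$. Hence $T(x-Px)=0$, and so
\[
T=T_0P,\qquad T_0:=T|_{X_1}\in\mathcal B(X_1,X_0).
\]

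\textbf{Step 2: invertible perturbation.} $T_0$ maps $X_1$ into $X_0$, and both spaces have dimension $k$. Fix any linear isomorphism $A:X_1\to X_0$; it is bounded since the spaces are finite-dimensional. The function $\delta\mapsto\det(T_0+\delta A)$, computed in fixed bases, is a polynomial in $\delta$. Its leading coefficient is $\det A\neq0$, so it has only finitely many zeros. Given $\varepsilon>0$, choose $\delta\neq0$ that is not a zero and satisfies $|\delta|\,\|A\|\,\|P\|<\varepsilon$. Put $\widetilde T_\varepsilon:=T_0+\delta A$, which is invertible, and $\widetilde S_\varepsilon:=\widetilde T_\varepsilon^{-1}\in\mathcal B(X_0,X_1)$. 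Then $\widetilde T_\varepsilon\widetilde S_\varepsilon=I_{X_0}$, and
\[
\|T-\widetilde T_\varepsilon P\|=\|T_0P-T_0P-\delta AP\|\le|\delta|\,\|A\|\,\|P\|<\varepsilon.
\]

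The main obstacle is Step 1. The range of $T$, or equivalently the span of the $f_i$, may have dimension strictly less than $k$, yet the statement demands $\dim X_1=\dim X_0$ exactly. One also needs $T=T_0P$ with no error, so that $X_1$ and $P$ do not depend on $\varepsilon$. Padding $X_1$ with vectors taken from $\bigcap_{j\le r}\ker g_j$ resolves both points at once, and Step 2 is then a routine determinant argument.
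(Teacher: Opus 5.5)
Your proof is correct and follows essentially the paper's route: in both, $X_1$ is a complement of $\ker T$ padded with $\dim X_0-\operatorname{rank}T$ independent vectors from $\ker T$, and $P$ has kernel inside $\ker T$, so $T=T|_{X_1}P$ holds exactly and only the perturbation depends on $\varepsilon$ (your count $\dim N\ge k-r$ makes explicit a point the paper leaves implicit). The only real difference is the perturbation: the paper adds $\delta VQ$, where $Q$ projects $X_1$ onto the padding $Y$ and $V$ maps $Y$ isomorphically onto a complement of $\operatorname{Ran}T$ in $X_0$, so invertibility holds for every $\delta>0$ by the block structure; you instead add $\delta A$ for an arbitrary isomorphism $A$ and avoid the finitely many zeros of $\delta\mapsto\det(T_0+\delta A)$.
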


\begin{proof}
	Since $X_0$ is finite-dimensional, $T$ is a finite-rank operator.
	Thus $\operatorname{Ran}T$ is finite-dimensional and hence closed.
	Moreover, $\ker T$ has finite codimension in $X$. Hence there exists
	a finite-dimensional subspace $M\subseteq X$ such that
	$X=M\oplus\ker T$. The restriction
	$T|_M:M\to\operatorname{Ran}T$ is therefore an isomorphism.
	
	Since $X_0$ is finite-dimensional, choose a subspace $N\subseteq X_0$
	such that $X_0=\operatorname{Ran}T\oplus N$, and set
	$n=\dim N$.
	
	If $n=0$, then $\operatorname{Ran}T=X_0$. Set $X_1=M$, and let
	$P:X\to X_1$ be the bounded projection associated with
	$X=X_1\oplus\ker T$. Since $T$ vanishes on $\ker T$, we have $T=TP$.
	For every $\varepsilon>0$, define
	$\widetilde{T}_{\varepsilon}=T|_{X_1}$ and
	$\widetilde{S}_{\varepsilon}
	=\widetilde{T}_{\varepsilon}^{-1}$. Then
	$\widetilde{T}_{\varepsilon}\widetilde{S}_{\varepsilon}=I_{X_0}$
	and
	$\|T-\widetilde{T}_{\varepsilon}P\|=0<\varepsilon$.
	
	Suppose now that $n>0$. Choose linearly independent vectors
	$x_1,\ldots,x_n\in\ker T$ and set
	$Y=\operatorname{span}\{x_1,\ldots,x_n\}$. Then
	$\dim Y=\dim N$, so there exists an isomorphism $V:Y\to N$.
	
	Since $Y$ is finite-dimensional, it is complemented in $\ker T$.
	Thus there exists a closed subspace $Z\subseteq\ker T$ such that
	$\ker T=Y\oplus Z$. Consequently,
	\[
	X=M\oplus Y\oplus Z.
	\]
	Set $X_1=M\oplus Y$. Then
	$\dim X_1=\dim\operatorname{Ran}T+\dim N=\dim X_0$.
	
	Let $P:X\to X_1$ be the bounded projection associated with
	$X=X_1\oplus Z$, and let $Q:X_1\to Y$ be the projection onto $Y$
	along $M$. Since $VQP\neq0$, for a given $\varepsilon>0$ define
	$\delta_{\varepsilon}=\varepsilon/(2\|VQP\|)$ and define
	$\widetilde{T}_{\varepsilon}:X_1\to X_0$ by
	\[
	\widetilde{T}_{\varepsilon}(m+y)
	=
	Tm+\delta_{\varepsilon}Vy,
	\qquad m\in M,\quad y\in Y.
	\]
	
	Since $X_1=M\oplus Y$, $X_0=\operatorname{Ran}T\oplus N$,
	$T|_M:M\to\operatorname{Ran}T$ is an isomorphism, and $V:Y\to N$
	is an isomorphism, it follows that
	$\widetilde{T}_{\varepsilon}:X_1\to X_0$ is an isomorphism.
	Therefore, define
	$\widetilde{S}_{\varepsilon}
	=\widetilde{T}_{\varepsilon}^{-1}$.
	Hence
	$\widetilde{T}_{\varepsilon}\widetilde{S}_{\varepsilon}=I_{X_0}$.
	
	Finally, if $x=m+y+z$, with $m\in M$, $y\in Y$, and $z\in Z$,
	then $y,z\in\ker T$, so $Tx=Tm$, while $Px=m+y$. Hence
	\[
	T-\widetilde{T}_{\varepsilon}P
	=
	-\delta_{\varepsilon}VQP.
	\]
	Consequently,
	\[
	\left\|T-\widetilde{T}_{\varepsilon}P\right\|
	=
	\delta_{\varepsilon}\|VQP\|
	=
	\frac{\varepsilon}{2}
	<
	\varepsilon.
	\]
	This completes the proof.
\end{proof}

We now obtain the corresponding result for compact operators on
Banach spaces.

\begin{corollary}\label{cor:compact-extension}
	Let $X$ be a separable Banach space with the approximation property,
	and let $T\in\mathcal{B}(K(X))$ where $K(X)$ denotes the ideal of compact operators on $X$.
	If $T$ is hypercyclic on $K(X)$,
	then $T$ is generalized numerically transitive on $K(X)$.
\end{corollary}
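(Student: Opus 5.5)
The plan is to show that for every pair of nonempty open sets $\widetilde O\subseteq K(X)\times K(X)$ and $O\subseteq K(X)$ there is an $n$ with $\Theta_n(\widetilde O)\cap O\neq\varnothing$, where $\Theta_n(A,B)=A\,T^n(B)$. I would first shrink both sets to product-type neighbourhoods of finite-rank operators. By the approximation property, finite-rank operators are dense in $K(X)$. So I can take $\widetilde O\supseteq B(A_0,\delta)\times B(B_0,\delta)$ and $O\supseteq B(C_0,\delta)$ with $A_0,B_0,C_0$ of finite rank. The task then reduces to finding $A$ near $A_0$, $B$ near $B_0$ and $n$ with $A\,T^n(B)$ near $C_0$.

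The key step is to modify $A_0$ slightly so that it acquires a right inverse on a finite-dimensional subspace containing $\operatorname{Ran}C_0$. Let $X_0$ be a finite-dimensional subspace of $X$ containing both $\operatorname{Ran}A_0$ and $\operatorname{Ran}C_0$, and regard $A_0\in\mathcal B(X,X_0)$. Lemma~\ref{lem:finite-dimensional-perturbation} then supplies a finite-dimensional $X_1$, a bounded projection $P:X\to X_1$, and for each $\varepsilon$ operators $\widetilde T_\varepsilon:X_1\to X_0$ and $\widetilde S_\varepsilon:X_0\to X_1$. These satisfy $\|A_0-\widetilde T_\varepsilon P\|<\varepsilon$ and $\widetilde T_\varepsilon\widetilde S_\varepsilon=I_{X_0}$. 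Set $A:=\widetilde T_\varepsilon P$, viewed as a finite-rank, hence compact, operator $X\to X$; with $\varepsilon<\delta$ we have $A\in B(A_0,\delta)$. Next set $D:=\widetilde S_\varepsilon C_0\in K(X)$, understood as a map $X\to X_0\to X_1\subseteq X$. Then $A D=\widetilde T_\varepsilon P\widetilde S_\varepsilon C_0=\widetilde T_\varepsilon\widetilde S_\varepsilon C_0=C_0$, since $P$ is the identity on $X_1$.

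Finally I would use hypercyclicity. The set of hypercyclic vectors of $T$ is dense in $K(X)$, so I can choose a hypercyclic $B\in B(B_0,\delta)$. Its orbit is dense, so there is $n$ with $\|T^nB-D\|<\eta$. Then $\|A T^n(B)-C_0\|=\|A(T^nB-D)\|\le\|A\|\eta$. Since $A$ is fixed once $\varepsilon$ is fixed, taking $\eta<\delta/(\|A\|+1)$ gives $A\,T^n(B)\in O$, which proves the claim.

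The main obstacle I expect is the bookkeeping needed to make the algebra work inside $K(X)$ and not just in $\mathcal B(X,X_0)$. The lemma's operators must be interpreted as elements of $K(X)$ by composing with the inclusion $X_0\hookrightarrow X$, and one must check that $P\widetilde S_\varepsilon=\widetilde S_\varepsilon$. The ordering of choices also matters: $\varepsilon$ and hence $\|A\|$ are fixed first, and only then is $\eta$ chosen. Separability of $X$ is used only to ensure that $K(X)$ is separable, so that hypercyclicity (equivalently, by Birkhoff, transitivity) makes sense with a dense set of hypercyclic vectors.
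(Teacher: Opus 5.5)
Your proof is correct and follows the paper's argument essentially step for step. Both use finite-rank approximation via the approximation property, and both apply Lemma~\ref{lem:finite-dimensional-perturbation} to the left factor to get $A=\widetilde T_\varepsilon P$ with a right inverse on a finite-dimensional $X_0\supseteq\operatorname{Ran}C_0$. Both then pull back via $D=\widetilde S_\varepsilon C_0$ with $AD=C_0$ and use a dense orbit to bring $T^n(B)$ close to $D$ with $B$ near $B_0$; the paper invokes topological transitivity directly, which amounts to the same thing.

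One small correction to your closing remark: separability of $X$ does not make $K(X)$ separable. For example, $K(\ell^1)$ contains an isometric copy of $\ell^\infty$. Your argument does not need this, though, since hypercyclicity alone gives a dense set of hypercyclic vectors, namely all the $T^kB$ for a hypercyclic $B$.
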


\begin{proof}
	Let $O_1,O_2,O_3$ be non-empty open subsets of $K(X)$. Since $X$
	has the approximation property, finite-rank operators are dense in
	$K(X)$. We may therefore choose non-zero finite-rank operators
	$D\in O_1$, $F\in O_2$, and $G\in O_3$. Choose $\delta>0$ such that
	$B(D,\delta)\subseteq O_1$, $B(F,\delta)\subseteq O_2$, and
	$B(G,\delta)\subseteq O_3$.
	
	Set
	\[
	E=\operatorname{Ran}D+\operatorname{Ran}F+\operatorname{Ran}G.
	\]
	Then $E$ is a finite-dimensional subspace of $X$.
	
	Regard $D$ as an operator from $X$ into $E$. Applying
	Lemma~\ref{lem:finite-dimensional-perturbation} with
	$\varepsilon=\delta/2$, we obtain a finite-dimensional subspace
	$X_1\subseteq X$, a bounded projection $P_1:X\to X_1$, and operators
	$\widetilde D\in\mathcal{B}(X_1,E)$ and
	$\widetilde S\in\mathcal{B}(E,X_1)$ such that
	\[
	\|D-\widetilde D P_1\|<\frac{\delta}{2},
	\qquad
	\widetilde D\widetilde S=I_E.
	\]
	
	Define $\widehat D=\widetilde D P_1$. Since $\widehat D$ has
	finite-dimensional range, $\widehat D\in K(X)$, and the preceding
	estimate shows that $\widehat D\in O_1$. Moreover, since
	$P_1\widetilde S=\widetilde S$, we have
	$\widehat D\widetilde S=I_E$.
	
	Now set $Y=\widetilde S G$. Since $G$ has finite rank, $Y$ also has
	finite rank, and hence $Y\in K(X)$. Moreover,
	\[
	\widehat D Y
	=
	\widehat D\widetilde S G
	=
	G.
	\]
	
	Since $T$ is hypercyclic on $K(X)$, it is topologically transitive.
	Therefore there exist $B\in K(X)$ and $n\in\mathbb{N}$ such that
	\[
	\|B-F\|<\frac{\delta}{2},
	\qquad
	\|T^n(B)-Y\|
	<
	\frac{\delta}{2\|\widehat D\|}.
	\]
	Hence $B\in O_2$. Finally,
	$$
	\|\widehat D T^n(B)-G\|
	=
	\|\widehat D(T^n(B)-Y)\| \leq
	\|\widehat D\|\,\|T^n(B)-Y\| <
	\frac{\delta}{2}
	<
	\delta.$$
	Thus $\widehat D T^n(B)\in O_3$. Therefore the family
	$\{\Theta_n:n\in\mathbb{N}\}$, where
	$\Theta_n(A,B)=AT^n(B)$ for all $A$, $B\in K(X)$, is topologically transitive. Hence $T$ is
	generalized numerically transitive on $K(X)$.
\end{proof}

In particular, taking $X=H$, we have $K(H)=B_0 (H)$. Therefore,
every hypercyclic operator on $B_0 (H)$ is generalized numerically
transitive.

We will now introduce another notion of transitivity that is applicable in spaces whose norm is induced by a Banach bimodule-valued inner product. As we will see in examples, such spaces are for instance the space of Hilbert Schmidt operators and Hilbert C*-modules. Let $ \mathcal{M}$ be a Banach bimodule over a $C^*$-algebra $ \mathcal{C}$, and let
$ \mathcal{K}$ be a cone in $ \mathcal{M}$. Suppose that $X$ is a Banach space such that
there exists a sesquilinear map
\[
\varphi:X\times X\longrightarrow \mathcal{M}
\]
with the property that
\[
\varphi(x,x)\in \mathcal{K} \qquad \text{for all }x\in X,
\]
and such that
\[
\|x \|_X= \|\varphi(x,x) \|_{\mathcal{M}}^{1/2}
\qquad \text{for all }x\in X.
\]
Let $S(X)$ denote the unit sphere of $X$. For $T\in \mathcal{B}(X)$ and $n\in\N$,
let
\[
\widetilde{\Theta}_n:S(X)\times S(X)\longrightarrow \mathcal{M}
\]
be given by
\[
\widetilde{\Theta}_n(x,y)=\varphi\bigl(T^n x,y\bigr).
\]
We say that $T$ is \emph{topologically C*-
	transitive with respect to $\varphi $ } (or shortly \emph{C*-transitive with respect to $\varphi $}) if the sequence
$\{\widetilde{\Theta}_n\}_{n\in\N}$ is topologically transitive. Usually, in concrete cases  we will omit mentioning \emph{with respect to $\varphi $} when it is clear which sesquilinear map $\varphi $ is under consideration. Also, we will say that $T$ is \emph{topologically C*-
	mixing with respect to $\varphi $ } (or shortly \emph{C*-mixing with respect to $\varphi $}) if the sequence
$\{\widetilde{\Theta}_n\}_{n\in\N}$ is topologically mixing. Usually, in concrete cases  we will omit mentioning \emph{with respect to $\varphi $} when it is clear which sesquilinear map $\varphi $ is under consideration.

Before proceeding further, let us first give some examples of such Banach
spaces.
\begin{example}
	Let
	$
	X= \mathcal{M}=B_0(H),
	$
	and let $ \mathcal{K}$ be the natural cone consisting of positive operators in
	$B_0(H)$. Define
	$
	\varphi:B_0(H)\times B_0(H)\longrightarrow B_0(H)
	$
	by
	$
	\varphi(A,B)=B^*A,
	\text{ for all } A,B\in B_0(H).
	$
\end{example}
\begin{example}
	Let
	$
	X=B_2(H), \mathcal{M}=B_1(H),
	$
	and let $ \mathcal{K}$ be the natural cone consisting of positive operators in
	$B_1(H)$. Define
	$
	\varphi:B_2(H)\times B_2(H)\longrightarrow B_1(H)
	$
	by
	$
	\varphi(A,B)=B^*A,
	\text { for all } A,B\in B_2(H).
	$
\end{example}
In some special cases, topological C*-transitivity implies generalized numerical transitivity, as the next result illustrates.

\begin{proposition}\label{implication}
	If $T$ is a topologically  C*-transitive operator on $B_0(H)$, then
	$T$ is generalized numerically transitive on $B_0(H).$
\end{proposition}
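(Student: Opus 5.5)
The plan is to transfer the transitivity of $\{\widetilde{\Theta}_n\}$ on $S(B_0(H))\times S(B_0(H))$ to the family $\{\Theta_n\}$ on $B_0(H)\times B_0(H)$. Two elementary facts do the work: every nonzero element of $B_0(H)$ is a positive multiple of an element of the unit sphere, and the adjoint map $B\mapsto B^*$ is an isometric (conjugate-linear) bijection of $B_0(H)$ onto itself. Here $\varphi(A,B)=B^*A$, so $\widetilde{\Theta}_n(A,B)=B^*T^n(A)$, while $\Theta_n(a,b)=aT^n(b)$. The idea is to take $a=\alpha B^*$ and $b=\beta A$.

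\emph{Step 1: reduce to products of balls away from zero.} Let $\widetilde O\subseteq B_0(H)\times B_0(H)$ and $O\subseteq B_0(H)$ be nonempty open sets. I first choose nonempty open sets $W_1,W_2\subseteq B_0(H)$ with $W_1\times W_2\subseteq\widetilde O$. Shrinking them, I may assume that $0\notin W_1\cup W_2$, since every nonempty open set contains nonzero points. I then pick $a_0\in W_1$ and $b_0\in W_2$, and set $\alpha=\|a_0\|>0$ and $\beta=\|b_0\|>0$.

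\emph{Step 2: define the open sets in the sphere.} I put
\[
U_1:=\{A\in S(B_0(H)):\ \beta A\in W_2\},\qquad
U_2:=\{B\in S(B_0(H)):\ \alpha B^*\in W_1\},\qquad
O':=\tfrac{1}{\alpha\beta}\,O .
\]
These are relatively open in $S(B_0(H))$ by continuity of scaling and of the adjoint, and $O'$ is open. They are nonempty: $b_0/\beta\in U_1$ and $a_0^*/\alpha\in U_2$, using $\|a_0^*\|=\|a_0\|$.

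\emph{Step 3: apply C*-transitivity and rescale.} C*-transitivity gives $n\in\mathbb N$, $A\in U_1$ and $B\in U_2$ with $B^*T^n(A)\in O'$. Setting $a:=\alpha B^*\in W_1$ and $b:=\beta A\in W_2$, linearity of $T^n$ gives
\[
aT^n(b)=\alpha\beta\,B^*T^n(A)\in O.
\]
Hence $\Theta_n(\widetilde O)\cap O\neq\varnothing$, which is exactly generalized numerical transitivity.

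The only step needing care is Step 2. The sets must genuinely be open and nonempty in the sphere topology, which is why $W_1$ and $W_2$ are chosen to avoid $0$ and why the normalisation is done with fixed scalars $\alpha$ and $\beta$ rather than $\|a\|$ and $\|b\|$. I also have to check that the adjoint carries the relevant open sets correctly, since it is isometric but conjugate-linear. This causes no difficulty, because only real positive scalars $\alpha,\beta$ are moved through it. Beyond this, the argument is routine and uses nothing about $T$ other than linearity.
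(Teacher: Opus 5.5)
Your proof is correct and takes essentially the same approach as the paper's. Both arguments fix nonzero points of the two open sets and rescale by their norms into the unit sphere. The open set for the left factor is passed through the isometric adjoint, C*-transitivity is applied with the target set scaled by $1/(\alpha\beta)$, and the result is scaled back. Your reduction to a basic box $W_1\times W_2\subseteq\widetilde O$ makes explicit a step the paper takes for granted by working directly with product sets.
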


\begin{proof}
	Let $O_1,O_2,O_3$ be non-empty open subsets of $B_0(H)$. Then we can find
	some
	$
	F\in O_1\setminus\{0\},
	\text{ }
	D\in O_2\setminus\{0\},
	$
	and some $\delta>0$ such that the $\delta$-neighbourhood
	$B(D,\delta)$ of $D$ is contained in $O_2\setminus\{0\}$.
	Note that
	$
	\bigl(B(D,\delta)\bigr)^*=B(D^*,\delta),
	$
	because the involution is an isometry. Clearly,
	$
	\frac{1}{ \|F\|}O_1\cap S(B_0(H))\neq\varnothing
	$
	and
	$$
	\frac{1}{ \|D \|}
	\bigl(B(D,\delta)\bigr)^*
	\cap S(B_0(H))\neq\varnothing,
	$$
	because $F\in O_1$ and $ \|D \|= \|D^* \|$.
	Since
	$
	\frac{1}{\|F\|}O_1,
	\text{ }
	\frac{1}{ \|D \|}B(D^*,\delta),
	\text{ }
	\frac{1}{\|F \| \|D \|}O_3
	$
	are open, if $T$ is topologically C*-transitive operator on $B_0(H)$, then we can find some
	$n\in\N$ and some
	\[
	\widetilde F\in
	\frac{1}{ \|F \|}O_1\cap S(B_0(H)),
	\widetilde D\in
	\frac{1}{ \|D \|}B(D^*,\delta)\cap S(B_0(H))
	\]
	such that
	$
	\widetilde D^*T^n(\widetilde F)
	\in
	\frac{1}{ \|F \| \|D \|}O_3.
	$
	Hence
	$
	 \|D \|\,\widetilde D^*
	T^n\bigl( \|F \|\widetilde F\bigr)
	\in O_3.
	$
	Now we have
	\[
	 \|D \|\,\widetilde D^*
	\in
	 \|D \|
	\left(
	\frac{1}{ \|D \|}B(D^*,\delta)
	\right)^*
	=B(D,\delta)
	\subseteq O_2,
	\]
	and
	$
	\|F \|\widetilde F\in O_1.
	$
	Thus $T$ is generalized numerically transitive.
\end{proof}

While every hypercyclic operator on $B_0(H)$ is generalized numerically transitive, it turns out that the converse does not hold in general. In what follows, we will construct a generalized numerically transitive operator on $B_0(H)$ which is not even supercyclic and hence is not hypercyclic.

Let $ \mathcal S$ be the bilateral forward shift on $H$, defined by
$ \mathcal S(e_j)=e_{j+1}$ for every $j\in\mathbb{Z}$, and set
\[
U=2 \mathcal S.
\]
Then $U$ is an invertible normal operator on $H$. We let $L_U$ be the
corresponding left multiplier by $U$ on $B_0(H)$, that is,
\[
L_U(F)=UF,
\qquad F\in B_0(H).
\]
In sequel, we will say that $L_U $ is WOT-hypercyclic if there exists some $F\in B_0(H) $ whose orbit under $L_U $ is dense in the weak operator topology of $ B_0(H).$

\begin{proposition}\label{modular}
	The operator $L_U$ is a non-supercyclic topologically C*-transitive and generalized numerically transitive operator on $B_0(H)$. 
	Moreover, $L_U$ is not WOT-hypercyclic.
\end{proposition}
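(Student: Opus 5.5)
We argue directly from the definitions, treating the three assertions separately. Generalized numerical transitivity will follow from C*-transitivity by Proposition~\ref{implication}, so the substantive parts are C*-transitivity, non-supercyclicity and the failure of WOT-hypercyclicity.

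\emph{C*-transitivity.} Here $\varphi(A,B)=B^*A$ and $\widetilde\Theta_n(F,D)=D^*U^nF=2^nD^*\mathcal S^nF$. Let $O_1,O_2\subseteq S(B_0(H))$ and $O_3\subseteq B_0(H)$ be non-empty open sets.
\begin{enumerate}
\item[(1)] Using density of finite-rank operators, choose $F_0\in O_1$, $D_0\in O_2$ and $G\in O_3$ that are \emph{finite matrices}, i.e.\ $F_0=P_mF_0P_m$, and similarly for $D_0$ and $G$, for some $m$.
\item[(2)] Perturb $D_0$ by an arbitrarily small finite matrix supported in $P_mHP_m$ so that $D_0^*$ restricted to $P_mH$ is invertible. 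Then for every $n$ there is a finite matrix $W$ with $D_0^*W=G/2^n$ and $\|W\|\le C2^{-n}$, where $C$ depends only on $D_0,G$.
\item[(3)] For $n>2m$ put $F=F_0+\mathcal S^{-n}W$. Since $\mathcal S^nF_0$ has range in $\overline{\Span}\{e_j: j>m\}$, which is orthogonal to the range of $D_0$, we get $D_0^*\mathcal S^nF_0=0$. Hence
\[
D_0^*U^nF = 2^nD_0^*W = G .
\]
\item[(4)] Since $\|F-F_0\|\le C2^{-n}$, the normalisation $F/\|F\|$ lies in $O_1$ for $n$ large. The factor $\|F\|^{-1}$ multiplies the output $G$ by a scalar tending to $1$, so the output still lies in $O_3$, and $D_0/\|D_0\|\in O_2$ likewise.
\end{enumerate}
This works for all large $n$, so the same construction should even give C*-mixing.

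\emph{Non-supercyclicity.} The key identity is
\[
(\lambda U^nF)^*(\lambda U^nF)=|\lambda|^24^nF^*F,
\]
since $U^*U=4I$. Thus every element $X$ of the projective orbit of $F$ satisfies $X^*X\in\mathbb R_{\ge0}F^*F$. The map $X\mapsto X^*X$ is norm continuous. If the projective orbit were dense, we could approximate both rank-one projections $E_0=e_0\otimes e_0$ and $E_1=e_1\otimes e_1$. This would give nonnegative scalars $c,c'$ with $cF^*F$ close to $E_0$ and $c'F^*F$ close to $E_1$. Testing against $e_0$ and $e_1$ forces $c,c'$ to be bounded away from $0$, and then $E_0$ and $E_1$ would be close to positive multiples of each other, which is impossible.

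\emph{Failure of WOT-hypercyclicity.} This is the step I expect to be the main obstacle, because WOT neighbourhoods control neither norms nor $X^*X$.

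The easy case is when $F$ has nontrivial kernel. Every orbit element $U^nF$ then annihilates a fixed nonzero $x\in\ker F$, and $\{X: Xx=0\}$ is a proper WOT-closed set, so the orbit is not WOT-dense.

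For injective $F$, I would try to exploit the rigid relation $U^nF=2^n\mathcal S^nF$. For fixed vectors $x$ and $y$ one has
\[
\langle U^nFx,y\rangle=2^n\langle Fx,\mathcal S^{-n}y\rangle .
\]
The plan is to choose a finite family of test pairs $(x_i,y_i)$ and a target $G$ whose prescribed WOT data are incompatible with a \emph{single} orbit element. For instance, take $x_1=e_0$, $x_2=e_1$ and require one of the matrix entries to be large while a related entry is near zero. The aim is a contradiction using compactness of $F$, namely $\|Fx\|$ finite and $\langle Fx,e_{k-n}\rangle\to0$, together with the linear dependence structure that the common factor $2^n\mathcal S^n$ imposes across different columns. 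If that fails, the fallback is to look for a WOT-continuous invariant of $X\mapsto X$ on the orbit analogous to the Gram-matrix invariant used above.
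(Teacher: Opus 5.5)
\textbf{The gap is in the WOT assertion.} You settle only the case $\ker F\neq\{0\}$. For injective $F$ you give a plan, and as stated it cannot succeed. WOT neighbourhoods built from finitely supported vectors such as $e_0,e_1$ see only the numbers $\langle U^nFe_a,e_b\rangle=2^n\langle Fe_a,e_{b-n}\rangle$. Compactness ($\langle Fx,e_{k-n}\rangle\to0$) gives no control against the factor $2^n$. In fact there are injective compact $F$ whose orbit meets every such neighbourhood. To build one, prescribe the entries $\langle Fe_a,e_{b-n_k}\rangle=2^{-n_k}g^{(k)}_{ab}$ ($|a|,|b|\le K_k$) in pairwise disjoint row blocks far down, with $n_k$ growing fast enough that the blocks are square-summable. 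Then add an injective compact part supported in the nonnegative rows. So any obstruction must use test vectors of infinite support, i.e.\ a genuinely weak-topology argument, and your proposal does not supply one.

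\textbf{The missing idea} is the reduction the paper uses, both for non-supercyclicity and, in its sketch, for the WOT claim. For $x\neq0$ the evaluation map $\Phi_x(A)=Ax$ is continuous from $(B_0(H),\mathrm{WOT})$ onto $H$ with its weak topology; it is already onto on rank-one operators. Hence a WOT-dense orbit $\{U^nF\}$ would make $\{U^n(Fx)\}$ weakly dense in $H$ for every $x\neq0$. Your kernel case is just the degenerate instance $Fx=0$. It then remains to know that $U=2\mathcal S$ is not weakly hypercyclic on $H$. The theorem that normal operators are not supercyclic is a norm statement and does not give this; the paper appeals to Feldman's Corollary~3.4 and leaves the details to the reader. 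A self-contained alternative uses the growth $\|U^nv\|=2^n\|v\|$. Apply Ball's complex plank theorem to finitely many of the unit vectors $U^nv/\|U^nv\|$ with weights $t_n=\frac{\sqrt3}{2}2^{-n}$ (so $\sum_n t_n^2=1$), then take a weak cluster point. This gives $z$ with $\|z\|\le1$ and $|\langle U^nv,z\rangle|\ge\frac{\sqrt3}{2}\|v\|$ for all $n\ge0$. So the orbit of $v$ misses a weak neighbourhood of $0$.

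\textbf{The other parts are fine.} Your C*-transitivity argument is the paper's construction. The paper's $E_n=P_mF/\|P_mF\|+U^{-n}\widetilde D^{*-1}\|\widetilde D^*\|P_mG$ is your $F_0+\mathcal S^{-n}W$, and both rely on $P_mU^nP_m=0$ for $n>2m$. One small point: renormalising the perturbed $D_0$ multiplies the output by $\|D_0\|^{-1}$, which does not tend to $1$ with $n$. Either make the perturbation small enough or absorb $\|D_0\|$ into $W$. Generalized numerical transitivity via Proposition~\ref{implication} is as in the paper.

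Your non-supercyclicity proof takes a different route from the paper's, and it is correct. The paper pushes the projective orbit through $\Phi_x$ to get a supercyclic vector $Fx$ for the normal operator $U$. Your invariant $X^*X\in\mathbb R_{\ge0}F^*F$, which comes from $U^*U=4I$, is elementary and self-contained. However, it is specific to multiples of isometries. Unlike the evaluation-map argument, it does not pass to the weak operator topology, which is exactly where your proof stalls.
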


\begin{proof}
	Let $O_1,O_2,O_3$ be non-empty open subsets of $B_0(H)$ such that
	$$
	O_j\cap S\bigl(B_0(H)\bigr)\neq\varnothing,
	\text { for all } j\in\{1,2,3\}.
	$$ Choose
	$
	D\in O_1\cap S\bigl(B_0(H)\bigr),
	F\in O_2\cap S\bigl(B_0(H)\bigr),
	G\in O_3\cap S\bigl(B_0(H)\bigr),
	$
	and choose some $\delta\in(0,1)$ such that the $\delta$-neighbourhoods
	of $D,F,G$ are contained in $O_1,O_2,O_3$, respectively.
	Now we recall that the involution is an isometry. Therefore, by \cite[Proposition 2.2.1]{MT} and by passing to the adjoints, it is not hard to see that we can find some $m\in\N$ such that
	\[
	\|D-P_mD \|,\quad
	\|F-P_mF \|,\quad
	 \|G-P_mG \|
	<\frac{\delta}{16(1+ \|G \|)}.
	\]
	Then
	\[
	\|D-P_mDP_m\|
	\leq
	\|D-P_mD\|+\|P_m\|\,\|D-DP_m\|
	<
	\frac{\delta}{8}.
	\]
	Hence,
	\[
	\|P_mDP_m \|
	\geq  \| D \|-\frac{\delta}{8}
	=1-\frac{\delta}{8}
	>\frac78>0,
	\]
	and similarly
	$
	 \| P_mG \|,
	 \|P_mF \|>1-\frac{\delta}{16}>0.
	$
	Moreover,
	$$
		\left\|D-\frac{P_mDP_m}{ \|P_mDP_m \|}\right\| \leq \|D-P_mDP_m \|
		+\left\|P_mDP_m-\frac{P_mDP_m}{\|P_mDP_m \|}\right\|$$ $$ \leq \frac{\delta}{8}
		+ \|P_mDP_m \|
		\left|1-\frac{1}{P_mDP_m \|}\right| =\frac{\delta}{8}+\left| P_mDP_m \|-1\right|
		<\frac{\delta}{4}.
	$$
	By Lemma \ref{lem:finite-dimensional-perturbation}, we can choose an invertible operator
	$\widetilde D\in B(P_m(H))$ such that
	\[
	\left\|
	\frac{P_mDP_m}{ \|P_mDP_m \|}-\widetilde D
	\right\|<\frac{\delta}{16}.
	\]
	Consequently,
	$
	\left\|D-\frac{\widetilde DP_m}{ \|\widetilde DP_m \|}\right\|
	<\delta,
	$
	so that
	$
	\frac{\widetilde DP_m}{ \|\widetilde DP_m \|}
	\in O_1\cap S\bigl(B_0(H)\bigr).
	$
	Again, we can find some $n_0\in\N$ such that, for every $n\geq n_0$,
	\[
	P_mU^nP_m=0
	\]
	and
	\[
	\|U^{-n}P_m \|
	<
	\frac{\delta}
	{4 \|\widetilde D^{*-1} \| \|\widetilde D^* \|
		\|G \|(1+ \|G \|)}.
	\]
	For each $n\geq n_0$, set
	\[
	E_n:=
	\frac{P_mF}{ \|P_mF \|}
	+U^{-n}\widetilde D^{*-1} \|\widetilde D^* \|\,P_mG.
	\]
	Since $P_m\widetilde DP_m=\widetilde D$, we also have
	$P_m\widetilde D^*P_m=\widetilde D^*$, and therefore
	\[
	U^{-n}\widetilde D^{*-1}
	=U^{-n}P_m\widetilde D^{*-1}.
	\]
	Hence
	\begin{align*}
		\|E_n-F\|
		&\leq
		\left\|F-\frac{P_mF}{ \|P_mF }\right\|
		+
		\|U^{-n}\widetilde D^{*-1} \|\widetilde D^* \|\,P_mG \| \\
		&\leq
		\|F-P_mF \|
		+\left| \|P_mF \|-1\right|
		+ \|U^{-n}P_m \| \|\widetilde D^{*-1} \|
		\|\widetilde D^* \| \|G\| \\
		&<\frac{3\delta}{8(1+ \|G \|)}.
	\end{align*}
	By the last inequality,
	$$\left\|\frac{E_n}{ \|E_n \|}-F\right\| \leq  \|E_n-F \|+\left|1- \|E_n \| \right| <\frac{3\delta}{4(1+ \|G \|)}<\delta.$$
	Thus,
	$
	\frac{E_n}{ \|E_n \|}
	\in O_2\cap S\bigl(B_0(H)\bigr).
	$
	Next, since $P_mU^nP_m=0$ for all $n\geq n_0$, by some computations we obtain
	\begin{align*}
		\varphi\left(
		L_U^n\left(\frac{E_n}{ \|E_n \|}\right),
		\frac{\widetilde D}{ \|\widetilde D \|}
		\right)
		&=
		\frac{P_mG}{ \|E_n \|},
	\end{align*}
	where we have used $ \|\widetilde D \|= \| \widetilde D^* \|$.
	Moreover,
	$$
		\left\|G-\frac{P_mG}{ \|E_n \|}\right\|\leq
		\|G-P_mG\|
		+
		\frac{ \|P_mG \|}{ \|E_n \|}
		\left| \|E_n \|-1\right| <
		\frac{\delta}{16}
		+
		\frac{\frac{3\delta}{8}}
		{1-\frac{3\delta}{8}}. $$
	Here we have used
	\[
	 \|G \|\,\left| \|E_n \|-1\right|
	\leq
	\left| \|E_n \|- \|F \| \right| \|G \|
	<\frac{3\delta}{8},
	\]
	and
	\[
	\|E_n \|
	> \|F\|-\frac{3\delta}{8(1+ \|G \|)}
	>1-\frac{3\delta}{8}.
	\]
	Since $\delta\in(0,1)$, we get
	$
	1-\frac{3\delta}{8}>\frac58,
	$  and hence
	\[
	\left\|G-\frac{P_mG}{ \|E_n\|}\right\|
	<\frac{\delta}{16}+\frac{3\delta}{5}<\delta.
	\]
	Therefore,
	\[
	\widetilde\Theta_n\!\left(
	O_1\cap S\bigl(B_0(H)\bigr),
	O_2\cap S\bigl(B_0(H)\bigr)
	\right)\cap O_3\neq\varnothing
	\]
	for every $n\geq n_0$. This proves that $L_U$ is topologically C*-
	transitive on $B_0(H)$. From Proposition \ref{implication} it follows that $L_U$ is generalized numerically
	transitive on $B_0(H)$.

It remains to show that $L_U$ is not supercyclic. Suppose, to the
contrary, that $L_U$ is supercyclic, and let $F\in B_0(H)$ be a
supercyclic vector for $L_U$. Since $F\neq0$, there exists $x\in H$
such that $Fx\neq0$.

Consider the evaluation map
$
\Phi_x:B_0(H)\longrightarrow H,$ given by $
\Phi_x(A)=Ax.
$
This map is continuous and surjective. Indeed, for every $y\in H$,
the rank-one operator
\[
A_y(z)=\frac{\langle z,x\rangle}{\|x\|^2}\,y
\]
belongs to $B_0(H)$ and satisfies $A_yx=y$.

Since the projective orbit of $F$ under $L_U$ is dense in
$B_0(H)$, its image under $\Phi_x$ is dense in $H$. Moreover,
\[
\Phi_x\bigl(\lambda L_U^n(F)\bigr)
=
\lambda U^n(Fx)
\]
for every $\lambda\in\mathbb{C}$ and $n\in\N$. Thus $Fx$ would
be a supercyclic vector for $U$. This is impossible because $U=2S$
is normal, and normal operators on complex Hilbert spaces are not
supercyclic by \cite[Theorem 4.5]{sanders}. Therefore $L_U$ is not supercyclic. \\
 Finally, by similar arguments and an application of \cite[Corollary 3.4]{feldman}, it can be proved that $L_U$ can neither be WOT-hypercyclic on $B_0(H).$  We leave the details to readers.
\end{proof}

As we will see in the next proposition, it is not true in general that topological C*-transitivity implies generalized numerical transitivity.

Let $\tilde\ell_2(B_0(H))$ denote the standard Hilbert module
over $B_0(H)$ (see \cite[Example 1.3.5]{MT}). Thus, in this
special case, we let
\[
X=\tilde\ell_2(B_0(H)), \qquad \mathcal{M}=B_0(H),
\]
and let $\varphi$ be the standard $B_0(H)$-valued inner product
on $\tilde\ell_2(B_0(H))$ given by
\[
\varphi\big((x_1,x_2,\ldots),(y_1,y_2,\ldots)\big)
=\sum_{j=1}^{\infty} y_j^*x_j.
\]
In other words,
\[
\varphi(x,y)=\langle y,x\rangle,
\qquad x,y\in\tilde\ell_2(B_0(H)),
\]
where $\langle\cdot,\cdot\rangle$ is the inner product on
$\tilde\ell_2(B_0(H))$ defined in \cite[Example 1.3.5]{MT}.

For every $x\in\tilde\ell_2(B_0(H))$ and $j\in\mathbb N$, we let
$x_j$ denote its $j$-th component. It has been proved in
\cite[Section 4]{filomat} that $\tilde\ell_2(B_0(H))$ is a Banach algebra under
componentwise multiplication, i.e.,
\[
(xy)_j=x_jy_j,
\qquad
x,y\in\tilde\ell_2(B_0(H)),\quad j\in\mathbb N.
\]
We have the following proposition.
\begin{proposition}\label{hilbert-module}
	There exists a topologically C*-transitive operator on
	$\tilde\ell_2(B_0(H))$ which is not generalized numerically transitive.
\end{proposition}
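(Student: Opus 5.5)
The plan is to take a "diagonal" operator on $\tilde\ell_2(B_0(H))$ that acts as the left multiplier $L_U$ from Proposition~\ref{modular} on all but one coordinate and as a strict contraction on the remaining coordinate. Concretely, set
\[
T(x_1,x_2,x_3,\ldots)=\Bigl(\tfrac12 x_1,\;Ux_2,\;Ux_3,\ldots\Bigr),\qquad U=2\mathcal S .
\]
$T$ is bounded because $\varphi(Tx,Tx)=\tfrac14x_1^*x_1+4\sum_{j\ge2}x_j^*x_j\le 4\varphi(x,x)$ in the order of $B_0(H)$, using $U^*U=4I$. The coordinates are treated separately: the first one will obstruct generalized numerical transitivity, and the remaining ones will supply C*-transitivity.

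\emph{Not generalized numerically transitive.} For $a,b\in\tilde\ell_2(B_0(H))$ we have $(\Theta_n(a,b))_1=a_1\bigl(\tfrac12\bigr)^n b_1$. Take $\widetilde O$ to be the product of two unit balls. Take $O$ to be a small ball around $(G,0,0,\dots)$, with $\|G\|=1$ and radius $\tfrac12$. Since $\|a_1\|,\|b_1\|$ are bounded on $\widetilde O$, we get $\|(\Theta_n(a,b))_1\|\le 2^{-n}C\to0$ uniformly. Also $\|z_1\|\le\|z\|$, so every $z\in O$ has $\|z_1\|>\tfrac12$. It follows that $\Theta_n(\widetilde O)\cap O=\varnothing$ for all large $n$. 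For the finitely many small $n$, I would shrink the radius of the first ball (or start with $\widetilde O$ of small norm) so that $\|a_1\|\|b_1\|<\tfrac12$. Then no $n$ works at all.

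\emph{C*-transitivity.} Let $O_1,O_2\subseteq S(\tilde\ell_2(B_0(H)))$ be nonempty open and $O_3\subseteq B_0(H)$ nonempty open. I will pick $x^0\in O_1$, $y^0\in O_2$ and $G\in O_3$, and truncate them. First, $x^0,y^0$ are replaced by finitely supported vectors (coordinates $1,\dots,N$). Then, as in the proof of Proposition~\ref{modular}, each coordinate is replaced by $P_m(\cdot)P_m$, with the $y$-coordinates made invertible on $P_mH$ via Lemma~\ref{lem:finite-dimensional-perturbation}, followed by renormalisation. The key computation is
\[
\varphi(T^nx,y)=2^{-n}y_1^*x_1+\sum_{j\ge2}y_j^*U^nx_j .
\]
For $x_j,y_j$ compressed by $P_m$ we have $y_j^*U^nx_j=y_j^*P_mU^nP_mx_j=0$ once $n\ge n_0$, and the first term is $O(2^{-n})$. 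To hit $G$, I mimic $E_n$ from Proposition~\ref{modular} in coordinate $2$. Replace $x_2$ by $x_2+U^{-n}(\widetilde y_2^{*})^{-1}P_mG$, where $\widetilde y_2$ is the invertible compression of $y_2$. If $y^0_2=0$, I first perturb $y^0$ by a small invertible-on-$P_mH$ piece in coordinate $2$. The added term has norm at most $\|U^{-n}P_m\|\cdot C=2^{-n}C$, so it is small. Its contribution to $\varphi$ is $\widetilde y_2^{*}(\widetilde y_2^{*})^{-1}P_mG=P_mG$, and the cross terms vanish for large $n$. After normalising $x$ and $y$, exactly as in the norm estimates of Proposition~\ref{modular}, we obtain $\varphi(T^nx,y)\approx P_mG/(\|x\|\|y\|)\approx G$.

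The main obstacle is this bookkeeping. We need one $m$ large enough for all finitely many coordinates and for $G$ simultaneously, and the normalisations of $x$ and $y$ must change the output only by a small amount. I expect the estimates of Proposition~\ref{modular} to carry over verbatim, coordinate by coordinate, with the extra $2^{-n}$ term from the first coordinate absorbed by enlarging $n_0$. In fact this gives the property for all $n\ge n_0$, i.e.\ C*-mixing.
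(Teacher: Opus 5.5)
Your proposal is correct, but it uses a different operator and a different obstruction than the paper.

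\textbf{The paper's route.} The paper takes $\widetilde D(T_1,T_2,\ldots)=(UT_1,0,0,\ldots)$. Failure of generalized numerical transitivity is then a pure range obstruction: the second coordinate of $z\widetilde D^{\,n}y$ vanishes identically, so a neighbourhood of $(0,\mathcal R,0,\ldots)$ is never met, whatever $\widetilde O$ is. C*-transitivity follows from Proposition~\ref{modular} used as a black box. Indeed $\varphi(\widetilde D^{\,n}x,y)=y_1^*U^nx_1$, so one only rescales the first coordinates by $\|x_1\|,\|y_1\|$ and renormalises.

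\textbf{Your route.} Your operator $(\tfrac12x_1,Ux_2,Ux_3,\ldots)$ is invertible, so no such range obstruction is available. You instead use decay in the first coordinate, choosing $\widetilde O$ as a small ball around $(0,0)$. There $\|(aT^nb)_1\|\le 2^{-n}\|a_1\|\|b_1\|<\tfrac12$ for every $n$, while each point of $O$ has first coordinate of norm $>\tfrac12$. This is correct.

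For the transitivity half you cannot simply cite Proposition~\ref{modular}, because every coordinate contributes to $\varphi(T^nx,y)=2^{-n}y_1^*x_1+\sum_{j\ge2}y_j^*U^nx_j$. So you re-run the $E_n$-construction inside coordinate $2$ after truncating and compressing, and this goes through:
\begin{itemize}
\item $y_j^*U^nx_j=0$ for $n>2m$ once $x_j=P_mx_j$ and $y_j=P_my_j$;
\item $\widetilde y_2^{\,*}(\widetilde y_2^{\,*})^{-1}=P_m$, so the added term in $x_2$ contributes exactly $P_mG$;
\item the first-coordinate term $2^{-n}y_1^*x_1$ tends to $0$;
\item the correction of norm at most $2^{-n}\|(\widetilde y_2^{\,*})^{-1}\|\,\|G\|$ tends to $0$, because $\widetilde y_2$ is fixed before $n$ is chosen.
\end{itemize}

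\textbf{What each buys.} The paper's route is shorter. Its operator, with its large kernel, is also reused in the later corollaries on weak $n$-supercyclicity, $\varepsilon$-cyclicity and subspace hypercyclicity. Your route is self-contained and gives the property for all large $n$ directly, i.e.\ C*-mixing. It also produces an invertible example, showing that failure of generalized numerical transitivity can be a growth phenomenon rather than a range obstruction.
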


\begin{proof}
	Let $U$ be the normal operator on $H$ as above in Proposition \ref{modular}, and
	$
	\widetilde D\in\mathcal B\big(\tilde\ell_2(B_0(H))\big)
	$
	be given by
	\begin{equation}
		\widetilde D(T_1,T_2,\ldots)
		=(UT_1,0,0,\ldots),
		\qquad
		(T_1,T_2,\ldots)\in\tilde\ell_2(B_0(H)).
		\tag{2}
	\end{equation}
	
	We first show that $\widetilde D$ is C*-
	transitive on $\tilde\ell_2(B_0(H))$.
	To this end, let $O_1,O_2$ be open subsets of $\tilde\ell_2(B_0(H))$ such that
	\[
	O_j\cap S\big(\tilde\ell_2(B_0(H))\big)\neq\varnothing,
	\qquad j\in\{1,2\},
	\]
	and let $\widetilde O$ be a non-empty open subset of $B_0(H)$.
	Choose
	\[
	x\in O_1\cap S\big(\tilde\ell_2(B_0(H))\big),\qquad
	y\in O_2\cap S\big(\tilde\ell_2(B_0(H))\big),
	\]
	and $F\in\widetilde O$.
	By taking $\delta>0$ sufficiently small, we may assume that the
	$\delta$-neighbourhoods of $x$, $y$, and $F$ are contained in
	$O_1$, $O_2$, and $\widetilde O$, respectively.
	We may moreover assume that
	$x=(x_1,x_2,\ldots),\qquad
	y=(y_1,y_2,\ldots)$
	satisfy
	$
	x_1\neq0, y_1\neq0.
	$
	Indeed, suppose that $x_1=0$. Choose some
	$T\in S(B_0(H))$ and set
	\[
	x'
	=\left(\frac{\delta}{2}T,x_2,x_3,\ldots\right),
	\qquad
	x''=\frac{x'}{\|x'\|}.
	\]
	Then $x',x''\in\tilde\ell_2(B_0(H))$ and
	\[
	\|x-x''\|
	\leq \|x-x'\|+\|x'-x''\| =\|x-x'\|+\big|\|x'\|-1\big| <\delta.
	\]
	Hence
	\[
	x''\in O_1\cap S\big(\tilde\ell_2(B_0(H))\big),
	\]
	and the first component of $x''$ is non-zero. A similar
	argument applies to $y$.
	Since
	\[
	\lim_{t\to0}
	\left(
	\frac{1}{(1-t)^2}-1
	\right)=
	\lim_{t\to0}
	\left(
	1-\frac{1}{(1+t)^2}
	\right)=0,
	\]
	we can choose
	$\varepsilon\in\left(0,\frac{\delta}{4}\right)$
	such that
	\[
	1-\frac{1}{(1+\varepsilon)^2}, \text{ }
	\frac{1}{(1-\varepsilon)^2}-1
	<
	\frac{\delta}
	{4\left(
		\dfrac{\|F\|}{\|x_1\|\|y_1\|}+1
		\right)}.
	\]
	
	Let $V_1$ and $V_2$ be the $\varepsilon$-neighbourhoods of
	$
	\frac{x_1}{\|x_1\|}
	\text{ and }
	\frac{y_1}{\|y_1\|},
	$
	respectively, in $B_0(H)$. Also, let $\widetilde V$ be the
	$\varepsilon$-neighbourhood of
	$
	\frac{F}{\|x_1\|\|y_1\|}
	$
	in $B_0(H)$.
	Since $U$ is C*-transitive on $B_0(H)$ by Proposition \ref{modular},
	there exist some $n\in\mathbb N$ and
	\[
	z\in V_1\cap S(B_0(H)),\qquad
	w\in V_2\cap S(B_0(H))
	\]
	such that
	$
	w^*U^nz\in\widetilde V.
	$
	Thus,
	\[
	\left\|
	w^*U^nz-\frac{F}{\|x_1\|\|y_1\|}
	\right\|<\varepsilon.
	\]
	
	Since $\|x_1\|\leq\|x\|=1$, we have
	\[
	\left\|
	\|x_1\|z-x_1
	\right\|
	=
	\|x_1\|
	\left\|
	z-\frac{x_1}{\|x_1\|}
	\right\|
	<
	\|x_1\|\varepsilon
	<
	\varepsilon.
	\]
	Similarly,
	$
	\left\|
	\|y_1\|w-y_1
	\right\|<\varepsilon.
	$
	Let $\bar x,\bar y\in\tilde\ell_2(B_0(H))$ be given by
	\[
	\bar x
	=(\|x_1\|z,x_2,x_3,\ldots), \text{ }
	\bar y
	=(\|y_1\|w,y_2,y_3,\ldots).
	\]
	Then
	$
	\|x-\bar x\|<\varepsilon,
	\text{ }
	\|y-\bar y\|<\varepsilon.
	$
	Since $z,w\in S(B_0(H))$, both $\bar x$ and $\bar y$ are
	non-zero. Put
	$
	\widetilde x=\frac{\bar x}{\|\bar x\|}$ and $
	\widetilde y=\frac{\bar y}{\|\bar y\|}.$
	Then
	\[
	\|x-\widetilde x\|
	\leq
	\|x-\bar x\|
	+
	\left\|
	\bar x-\frac{\bar x}{\|\bar x\|}
	\right\| =
	\|x-\bar x\|
	+
	\big|\|\bar x\|-1\big| <2\varepsilon
	<\frac{\delta}{2}.
	\]
	Similarly,
	$
	\|y-\widetilde y\|<2\varepsilon<\frac{\delta}{2}.
	$
	Consequently,
	\[
	\widetilde x
	\in O_1\cap S\big(\tilde\ell_2(B_0(H))\big) \text{ and }
	\widetilde y
	\in O_2\cap S\big(\tilde\ell_2(B_0(H))\big).
	\]
	Furthermore,
	\[
	\left\langle
	\widetilde y,\widetilde D^n(\widetilde x)
	\right\rangle
	=
	\frac{\|y_1\|w^*}{\|\bar y\|}
	\,U^n\,
	\frac{\|x_1\|z}{\|\bar x\|}.
	\]
	Hence
	\[
	\left\|
	\left\langle
	\widetilde y,\widetilde D^n(\widetilde x)
	\right\rangle
	-F
	\right\|=
	\|x_1\|\|y_1\|
	\left\|
	\frac{w^*U^nz}{\|\bar y\|\|\bar x\|}
	-
	\frac{F}{\|x_1\|\|y_1\|}
	\right\|.
	\]
	Since $\|x_1\|,\|y_1\|\leq1$, it follows that
	\[ \left\|
	\left\langle
	\widetilde y,\widetilde D^n(\widetilde x)
	\right\rangle
	-F
	\right\| \leq
	\left\|
	\frac{w^*U^nz}{\|\bar y\|\|\bar x\|}-
	\frac{F}{\|x_1\|\|y_1\|}
	\right\| \]
	\[	\leq
	\left\|
	\frac{w^*U^nz}{\|\bar y\|\|\bar x\|}
	-w^*U^nz
	\right\|
	+
	\left\|
	w^*U^nz
	-
	\frac{F}{\|x_1\|\|y_1\|}
	\right\| \leq
	\|w^*U^nz\|
	\left|
	\frac{1}{\|\bar y\|\|\bar x\|}-1
	\right|
	+\frac{\delta}{4}.\]
	Moreover,
	\[
	\left\|
	w^*U^nz
	-
	\frac{F}{\|x_1\|\|y_1\|}
	\right\|
	<\varepsilon
	<\frac{\delta}{4}
	<1,
	\]
	and therefore
	\[
	\|w^*U^nz\|
	\leq
	\frac{\|F\|}{\|x_1\|\|y_1\|}+1.
	\]
	Thus
	\[ \left\|
	\left\langle
	\widetilde y,\widetilde D^n(\widetilde x)
	\right\rangle
	-F
	\right\|
	\leq
	\frac{\delta}{4}
	+
	\left(
	\frac{\|F\|}{\|x_1\|\|y_1\|}+1
	\right)
	\left|
	\frac{1}{\|\bar y\|\|\bar x\|}-1
	\right|. \]
	Now, since
	$
	\|x-\bar x\|<\varepsilon
	\text{ and }
	\|y-\bar y\|<\varepsilon,
	$
	we have
	\[
	1-\varepsilon
	=
	\|x\|-\varepsilon
	\leq
	\|\bar x\|
	\leq
	\|x\|+\varepsilon
	=
	1+\varepsilon,
	\]
	and similarly,
	$
	1-\varepsilon
	\leq
	\|\bar y\|
	\leq
	1+\varepsilon.$
	Therefore,
	$
	(1+\varepsilon)^{-2}
	\leq
	(\|\bar x\|\|\bar y\|)^{-1}
	\leq
	(1-\varepsilon)^{-2}.
	$\\
	If
	$
	(\|\bar x\|\|\bar y\|)^{-1}\leq1,$
	then
	$
	0
	\leq
	1-(\|\bar x\|\|\bar y\|)^{-1}
	\leq 1 -
	(1+\varepsilon)^{-2}.
	$
	On the other hand, if
	$
(\|\bar x\|\|\bar y\|)^{-1}\geq1,$
	then
	$
	0
	\leq
	(\|\bar x\|\|\bar y\|)^{-1}-1
	\leq
	(1-\varepsilon)^{-2}-1.
	$
	By our choice of $\varepsilon$, we therefore obtain
	\[
	\left|
	\frac{1}{\|\bar x\|\|\bar y\|}-1
	\right|
	<
	\frac{\delta}
	{4\left(
		\dfrac{\|F\|}{\|x_1\|\|y_1\|}+1
		\right)}.
	\]
	Hence,
	\[  \left\|
	\left\langle
	\widetilde y,\widetilde D^n(\widetilde x)
	\right\rangle
	-F
	\right\| \leq
	\frac{\delta}{4}
	+
	\left(
	\frac{\|F\|}{\|x_1\|\|y_1\|}+1
	\right)
	\left|
	\frac{1}{\|\bar y\|\|\bar x\|}-1
	\right| <
	\frac{\delta}{2}.\]
	Consequently,
	\[
	\left\langle
	\widetilde y,\widetilde D^n(\widetilde x)
	\right\rangle
	\in\widetilde O.
	\]
	Since $
	\widetilde x\in O_1, \text{ } \widetilde y\in O_2,$ and 
	$\|\widetilde x\|=\|\widetilde y\|=1,$
	we conclude that $\widetilde D$ is topologically C*-transitive
	on $\tilde\ell_2(B_0(H))$.
	
	Now we show that $\widetilde D$ is not generalized numerically
	transitive on $\tilde\ell_2(B_0(H))$.
	Let
	$ \mathcal R\in B_0(H)$ satisfy $
	\| \mathcal R\|=1$  and set
	$
	\eta=(0,\mathcal R,0,0,\ldots)\in\tilde\ell_2(B_0(H)).
	$
	Let $\widetilde{\mathcal U}$ be the $\frac12$-neighbourhood of
	$\eta$. For every $x\in\widetilde{\mathcal U}$, we have
	$
	\|x_2-\eta_2\|
	\leq
	\|x-\eta\|
	<
	\frac12.
	$
	Since $\eta_2= \mathcal R$ and $\| \mathcal R\|=1$, it follows that
	\[
	1-\|x_2\|
	\leq
	\| \mathcal R-x_2\|
	<
	\frac12,
	\]
	and hence $
	\|x_2\|>\frac12.$
	On the other hand, for every
	$y\in\tilde\ell_2(B_0(H))$, we have
	\[
	\widetilde D^{\,n}y
	=
	(U^ny_1,0,0,\ldots),
	\qquad n\in\mathbb N.
	\]
	Consequently,
	$
	\big(\widetilde D^{\,n}y\big)_2=0
	\text{ for every }n\in\mathbb N.
	$
	Since $\tilde\ell_2(B_0(H))$ is a Banach algebra under componentwise
	multiplication, we also have
$
	\big(z\widetilde D^{\,n}y\big)_2=0
	$
	for all $
	y,z\in\tilde\ell_2(B_0(H))$ and $ n\in\mathbb N.$
	Hence
	\[
	\Theta_n\big(
	\tilde\ell_2(B_0(H))\times\tilde\ell_2(B_0(H))
	\big)
	\cap
	\widetilde{\mathcal U}
	=
	\varnothing
	\qquad
	\text{for every }n\in\mathbb N.
	\]
	Consequently, $\widetilde D$ is not generalized numerically
	transitive on $\tilde\ell_2(B_0(H))$.
\end{proof}
The following corollary can be deduced by similar arguments as in the proof of Proposition \ref{hilbert-module}.
\begin{corollary}\label{diag-Hilbert}
	Let $T$ be a generalized compact (in the sense of \cite[Section 2.2]{MT}) operator on  $\tilde\ell_2(B_0(H))$ and suppose that $ T= diag(D_1, D_2, \cdots)$ with $ D_j \in B_0(H)$ for all $ j \in \mathbb N .$ Then, given any $ c \in (0,1) ,$ there exists some $ m \in \mathbb N $ such that $ \|  D_j \| \leq c $ for all $ j \geq m +1.$ Furthermore, if $ D_j $ is C*-mixing on $ B_0(H) $ for every $ j \leq m ,$ then $ T$ is C*-transitive on $\tilde\ell_2(B_0(H))$.
\end{corollary}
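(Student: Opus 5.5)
The plan has two steps. First I extract the tail estimate from generalized compactness. Then I reproduce the perturbation argument of Proposition \ref{hilbert-module}, with the first coordinate replaced by the first $m$ coordinates and "C*-transitive" replaced by "C*-mixing", so that a single common $n$ serves all of them.

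\emph{Step 1 (tail estimate).} By \cite[Section 2.2]{MT}, a generalized compact operator on $\tilde\ell_2(B_0(H))$ is a norm limit of operators of the form $\sum_{k=1}^N \theta_{u_k,v_k}$ with $\theta_{u,v}(x)=u\langle v,x\rangle$. Let $Q_m$ be the projection onto the coordinates $j\geq m+1$. For such finite sums one has $\|Q_m\theta_{u,v}\|\le \|Q_m u\|\,\|v\|\to 0$ as $m\to\infty$. Hence $\|Q_mT\|\to0$ for generalized compact $T$. Since $T=\operatorname{diag}(D_1,D_2,\dots)$, testing on elements supported in a single coordinate $j\ge m+1$ gives $\|D_j\|\le\|Q_mT\|$. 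So for every $c\in(0,1)$ there is $m$ with $\|D_j\|\le c$ for all $j\ge m+1$, and indeed $\|Q_mT\|\le c$.

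\emph{Step 2 (transitivity).} Let $O_1,O_2$ meet the unit sphere, let $\widetilde O\ni F$ be open in $B_0(H)$, and fix $\delta$ so that the $\delta$-balls lie inside these sets. Pick $x\in O_1$ and $y\in O_2$ on the unit sphere. As in Proposition \ref{hilbert-module}, perturb and renormalize so that $x_j\ne0$ and $y_j\ne0$ for $j\le m$, and so that $x_j=y_j=0$ for all $j>M$ for some $M$; truncation changes the norm only slightly.

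I then split the target as $F=\sum_{j=1}^m F_j$ with $F_j=F/m$. By C*-mixing of each $D_j$, applied to the $\varepsilon$-neighbourhoods of $x_j/\|x_j\|$, of $y_j/\|y_j\|$, and of $F_j/(\|x_j\|\|y_j\|)$, there are $N_j$ such that for every $n\ge N_j$ one finds unit vectors $z_j,w_j$ close to these points with $w_j^*D_j^nz_j$ close to $F_j/(\|x_j\|\|y_j\|)$. Take $n\ge\max_j N_j$ large enough that also $c^n<\varepsilon$.

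Define $\bar x=(\|x_1\|z_1,\dots,\|x_m\|z_m,x_{m+1},\dots)$, define $\bar y$ analogously, and normalize both. Then
\[
\langle \tilde y,T^n\tilde x\rangle=\frac{1}{\|\bar x\|\|\bar y\|}\Bigl(\sum_{j\le m}\|x_j\|\|y_j\|\,w_j^*D_j^nz_j+\sum_{j>m}y_j^*D_j^nx_j\Bigr).
\]
The first sum is within $\sum_{j\le m}\|x_j\|\|y_j\|\,\varepsilon\le m\varepsilon$ of $F$. The tail has norm at most $c^n\|Q_m x\|\|Q_m y\|\le c^n$ by the Cauchy--Schwarz inequality for Hilbert modules. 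The normalization factor is controlled exactly as in Proposition \ref{hilbert-module}. Choosing $\varepsilon$ small relative to $\delta$, $m$ and $\|F\|$ then places $\langle\tilde y,T^n\tilde x\rangle$ in $\widetilde O$, while $\tilde x\in O_1$ and $\tilde y\in O_2$.

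The main obstacle is synchronizing a single exponent $n$ across the $m$ coordinates. That is exactly why mixing, rather than mere transitivity, of each $D_j$ is assumed, and the proof must invoke it with the neighbourhoods fixed in advance. A secondary point is that the split $F=\sum_j F_j$ requires the weights $\|x_j\|\|y_j\|$ to be nonzero, which is why the initial perturbation making $x_j,y_j\neq0$ for all $j\le m$ is needed.
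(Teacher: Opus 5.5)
Your proposal is correct and follows essentially the same route as the paper. The paper gets the tail bound $\|D_j\|\le c$ for $j\ge m+1$ by citing \cite[Proposition 2.2.1]{MT}, whereas you derive $\|Q_mT\|\to 0$ directly from $\|Q_m\theta_{u,v}\|\le\|Q_mu\|\,\|v\|$; for the second part the paper only says it follows by slight modifications of the proof of Proposition~\ref{hilbert-module} and leaves the details to the reader, and your details supply exactly those modifications: C*-mixing of $D_1,\dots,D_m$ gives one common exponent $n$, $F$ is split among the first $m$ coordinates, and the tail $\sum_{j>m}y_j^*D_j^nx_j$ is bounded by $c^n$ through the module Cauchy--Schwarz inequality.
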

\begin{proof}
	The first statement follows directly from \cite[Proposition 2.2.1]{MT} because $T$ is generalized compact by the assumption. 
	Then, by utilizing this fact and by applying slight modifications of the proof of Proposition \ref{hilbert-module}, we can deduce the second statement of the corollary. We leave the details to readers.
\end{proof}
\begin{remark}\label{diag-example}
	We notice that by the proof of Proposition \ref{modular} it follows that $L_U$ is actually C*-mixing and not just C*-transitive. In general, the operator $ \lambda \mathcal S$ is C*-mixing on $B_0(H)$ for every scalar $ \lambda$ with $ \vert \lambda \vert > 1. $
\end{remark}
A natural question that arises now is whether there exist at all generalized numerically transitive operators on $\tilde\ell_2(B_0(H)).$ Positive answer to this question is provided in \cite[Proposition 2.9]{arxiv}.

The proof of Proposition \ref{hilbert-module} gives us also a tool for studying the relations between topological numerical transitivity, C*-transitivity and other dynamical concepts, which will be done in the next results. Although the concept of topological C*-transitivity is in a certain sense motivated by topological numerical transitivity, these two concepts are essentially different, as we show in the next proposition.
\begin{proposition}\label{prop:TNT-not-GNT-Schatten}
	There exists a topologically numerically transitive operator on
	$B_2(H)$ which is not topologically C*-transitive. In particular, there exists a numerically hypercyclic operator on $B_2(H)$ which is not topologically C*-transitive.
	
\end{proposition}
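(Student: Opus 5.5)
Since $B_2(H)$ is a Hilbert space with inner product $\langle A,B\rangle=\operatorname{tr}(B^*A)$, I would exploit the mismatch between the scalar state space $\Pi(B_2(H))$ and the $B_1(H)$-valued map $\varphi(A,B)=B^*A$. The plan is to identify $B_2(H)$ with $\ell^2(\mathbb N)$ through an orthonormal basis of rank-one operators $\{E_k\}_{k\in\mathbb N}$, where $E_k=e_{i_k}\otimes e_{j_k}$ enumerates the matrix units. I then take a diagonal operator $T=\operatorname{diag}(\lambda_1,\lambda_2,\dots)$ with respect to this basis, i.e.\ $T(E_k)=\lambda_kE_k$. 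The $\ell^2$ diagonal example following Theorem~\ref{thm:general-diagonal-lp} gives a topologically numerically transitive choice. I take $\lambda_1=Ru$, $\lambda_2=Rw$ and $\lambda_k=Rz$ for $k\ge3$, with $R>1$ and $u,w,z\in\mathbb T$ independent. By that example and Theorem~\ref{thm:general-diagonal-lp} with $p=2$, $T$ is topologically numerically transitive on $B_2(H)\cong\ell^2$. Proposition~\ref{thm:Baire-criterion} then makes it numerically hypercyclic, which gives the second assertion once the first is proved.

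To destroy C*-transitivity, I would arrange the enumeration so that $T$ is \emph{right} multiplication by a diagonal operator. Choose the $E_k$ so that all matrix units $e_i\otimes e_{j}$ with a fixed column index $j$ get the same eigenvalue. Concretely, put $T(A)=A\Lambda$ with $\Lambda=\operatorname{diag}(\mu_j)$ in the basis $\{e_j\}$, taking three columns with $\mu=Ru,Rw,Rz$ and the rest equal to $Rz$. Then $T$ is still diagonal in the matrix-unit basis. Each eigenvalue is repeated infinitely often, and the argument of the $\ell^p$ example only needs the three phases present, with density of $G_D$ in $\Delta$ obtained exactly as there. So topological numerical transitivity persists.

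Now $\widetilde\Theta_n(A,B)=B^*A\Lambda^n$. The key obstruction is that right multiplication by $\Lambda^n$ preserves the kernel structure in a rigid way. Fix a unit vector $e_{j_0}$ with $\mu_{j_0}=Rz$, and take $O_3$ to be a small $B_1$-neighbourhood of a target whose action is incompatible. I expect the cleanest choice is a neighbourhood of $0$-preserving structure: take $O_1,O_2$ small neighbourhoods of $e_1\otimes e_1$ in $S(B_2(H))$. Then $\|B^*A\Lambda^n\|_1\ge |\langle B^*A\Lambda^n e_{j},e_1\rangle|$-type lower bounds grow like $R^n$ once $\|A-e_1\otimes e_1\|_2$ and $\|B-e_1\otimes e_1\|_2$ are small. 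More robustly, since $|\mu_j|=R$ for all $j$, $\Lambda/R$ is unitary, and we have
\[
\|B^*A\Lambda^n\|_1=R^n\|B^*A\|_1\ge R^n\,|\operatorname{tr}(B^*A)|.
\]
For $A,B$ close to $e_1\otimes e_1$ the trace is close to $1$. Hence $\widetilde\Theta_n(O_1,O_2)$ avoids the unit ball of $B_1(H)$ for all $n\ge1$. The case $n=0$ is handled by choosing the target $O_3$ near $0$ and $O_1,O_2$ so that $|\operatorname{tr}(B^*A)|>1/2$. This contradicts topological transitivity of $\{\widetilde\Theta_n\}$.

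The main obstacle I anticipate is the first step: checking rigorously that the infinite-multiplicity diagonal operator is topologically numerically transitive, i.e.\ that $G_D$ is dense in $\Delta$. I would reduce this to Example~\ref{TNT-3}. Grouping coordinates with equal eigenvalues, $\sum_k a_k\lambda_k^n$ depends only on the three block sums $(\alpha,\beta,1-\alpha-\beta)$. Density of the good triples in the 2-simplex, via Shkarin's argument, then lifts to density in $\Delta$. The only care needed is near faces where a block sum vanishes, and this is handled by a small perturbation.
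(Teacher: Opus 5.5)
\textbf{Gap: the operator $T(A)=A\Lambda$ is not topologically numerically transitive.} This is not a technicality at the faces of the simplex; the claim is false. For $A\in S(B_2(H))$ write $\alpha=\|Ae_1\|^2$, $\beta=\|Ae_2\|^2$, $\gamma=\sum_{j\ge3}\|Ae_j\|^2$. Then
\[
\langle T^nA,A\rangle=\operatorname{tr}(A^*A\Lambda^n)=R^n(\alpha u^n+\beta w^n+\gamma z^n),
\qquad\text{so}\qquad
|\langle T^nA,A\rangle|\ge R^n(2\alpha-1).
\]
Let $U=\{A\in S(B_2(H)):\|Ae_1\|^2>3/4\}$. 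This set is nonempty and open, and it contains every sufficiently small neighbourhood of $e_1\otimes e_1$, in particular your $O_1$. On $U$ you have $F_0=1$ and $|F_n|>R^n/2>1/2$ for $n\ge1$. Hence $F_n(U)$ never meets the disc $\{|\zeta|<1/2\}$.

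In the language of your reduction, any triple with a block sum exceeding $1/2$ has a numerical orbit tending to infinity. So the good triples lie in the closed medial triangle $\{\alpha,\beta,\gamma\le 1/2\}$ and are not dense in the $2$-simplex. Shkarin-type genericity can only hold inside that triangle, so there is nothing to lift to $\Delta$.

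You rely on Example~\ref{TNT-3} and on the $\ell^p$ example after Theorem~\ref{thm:general-diagonal-lp}. Both assert this density, and both fail for the same reason: $|x_1|^2>1/2$ forces $|\langle D^nx,x\rangle|\ge(2|x_1|^2-1)R^n$. The paper's own proof starts from $D\circ P$ with $D$ from Example~\ref{TNT-3}, so it inherits the same defect. Citing it therefore cannot close your gap.

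\textbf{The failure is built into your design.} The property you use for the C*-obstruction, that $\Lambda/R$ is unitary, rules out topological numerical transitivity altogether. Take any operator $RW$ with $W$ unitary and $Wv=\zeta v$, $\|v\|=1$. Every unit vector $x$ with $|\langle x,v\rangle|^2>1/2$ satisfies $|\langle W^nx,x\rangle|\ge 2|\langle x,v\rangle|^2-1$. For $R\le 1$, all numerical orbits are bounded instead. So you would need a genuinely different transitive candidate, and the second half of the argument would then have to be redone for it.

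\textbf{The non-C*-transitivity half is correct on its own, and it takes a different route from the paper.} The paper shows that every $F^*(D\circ P)^n(E)$ annihilates a unit vector $x\perp e_{j_1},e_{j_2},e_{j_3}$, so these products stay away from the rank-one projection onto $\operatorname{span}\{x\}$. You instead take the target $\{C\in B_1(H):\|C\|_1<1/2\}$ and choose $O_1,O_2$ so small that $|\operatorname{tr}(B^*A)|>1/2$. Unitary invariance of $\|\cdot\|_1$ then gives $\|\widetilde\Theta_n(A,B)\|_1=R^n\|B^*A\|_1\ge|\operatorname{tr}(B^*A)|>1/2$ for every $n\ge0$. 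Your intermediate claim that the images avoid the unit ball for $n\ge1$ would need $|\operatorname{tr}(B^*A)|>1/R$ rather than $>1/2$. With the small-ball target, no case split is needed.
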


\begin{proof}
	Recall that $B_2(H)$ is a separable Hilbert space and hence is
	isometrically isomorphic to $\ell_2(\N)$. Let $D$ be the operator
	from Example~\ref{TNT-3}, which is topologically numerically transitive on $\mathbb C^3 ,$
	and $\{\psi_j\}_{j\in\N}$ be the standard orthonormal basis of
	$\ell_2(\N)\simeq B_2(H)$. Set
	\[
	V= Span\{\psi_1,\psi_2,\psi_3\}.
	\]
	If we let  $P$ denote now the orthogonal projection of $B_2(H)$ onto $V,$ then by similar calculations and arguments as in the proof of Proposition \ref{hilbert-module}, we can show that $D\circ P$ is topologically
	numerically transitive on $B_2(H)$. Hence $D\circ P$ is numerically hypercyclic on $B_2(H)$ since topological numerical transitivity implies numerical hypercyclicity.
	
	We now show that $D \circ P$ is not C*-transitive on $B_2 (H)$.
	Let $\{e_j\}_{j\in\N}$ be an orthonormal basis of $H$. Recall that
	$
	\{e_i\otimes e_j^*:i,j\in\N\}
	$
	is then an orthonormal basis of $B_2(H)$. Thus, we may assume that
	$\psi_k=e_{i_k}\otimes e_{j_k}^*$ for $k\in\{1,2,3\}$ and some
	$i_1,i_2,i_3,j_1,j_2,j_3\in\N$.
	Set
	\[
	H_0= Span\{e_{j_1},e_{j_2},e_{j_3}\}.
	\]
	Choose $x\in H_0^\perp$ with $\|x\|_H=1$, and let $ \tilde R$ be the
	orthogonal projection onto $ Span\{x\}$. Then $ \tilde R$ is a finite rank operator, hence it belongs to $B_1 (H)$.
	Let
	\[
	\widetilde U
	=
	\left\{
	T\in B_1(H):\|T- \tilde R\|_1<\frac12
	\right\}.
	\]
	Then $\widetilde U$ is a non-empty open subset of $B_1(H)$.
	For every $T\in\widetilde U$, we have
	\[
	1-\|T(x)\|_H
	\leq
	\| \tilde R(x)-T(x)\|_H
	\leq
	\| \tilde R-T\|
	\leq
	\|\tilde R-T\|_1
	<
	\frac12.
	\]
	Consequently,
	$
	\|T(x)\|_H>\frac12
	\text{ for every }T\in\widetilde U.
	$
	On the other hand, since $x\in H_0^\perp$, for each
	$k\in\{1,2,3\}$ we have
	\[
	(e_{i_k}\otimes e_{j_k}^*)(x)
	=
	\langle x,e_{j_k}\rangle e_{i_k}
	=
	0.
	\]
	Hence every operator in $V$ vanishes at $x$.
	Moreover, the construction of $D\circ P$ gives
	$\operatorname{Ran}(D\circ P)^n\subseteq V$ for every $n\in\N$.
	Therefore,
	$
	\bigl((D\circ P)^n(E)\bigr)(x)=0
	,$
	for every $E\in B_2(H)$ and every $n\in\N$,
	hence for each $n\in\N$ and every $F,E\in B_2(H)$, we have
	$
	\widetilde{\Theta}_n(F,E)(x)
	=
	F^*\bigl((D\circ P)^n(E)(x)\bigr)
	=
	0.
	$
	Thus $\widetilde{\Theta}_n(F,T)\notin\widetilde U$, because every element
	of $\widetilde U$ sends $x$ to a vector of norm greater than
	$1/2$. Consequently,
	\[
	\widetilde{\Theta}_n
	\bigl(B_2(H)\times B_2(H)\bigr)
	\cap\widetilde U
	=
	\varnothing
	\]
	for every $n\in\N$. Hence $D\circ P$ is not topologically C*-transitive on $B_2(H)$.
	This completes the proof.
\end{proof}
Recall that an operator
$
T\in \mathcal{B}(X),
$
where $X$ is a Banach space, is called \emph{weakly
	$n$-supercyclic} if there exists an $n$-dimensional subspace
$M$ of $X$ such that
\[
\bigcup_{k=1}^{\infty}T^k(M)
\]
is weakly dense in $X$.
Moreover, we recall from \cite{feldman}  that $T$ is said
to be \emph{$N$-weakly supercyclic} if there exists some
$x\in X$ such that, for every continuous surjective linear map
$
\gamma:X\longrightarrow \mathbb{C}^N,
$
it holds that
$
\gamma\bigl(\Orb_p(T,x)\bigr)
$
is dense in $\mathbb{C}^N$.

\begin{corollary}
	There exist $C^*$-transitive operators on
	$
	\widetilde{\ell}_2(B_0(H))
	$
	and topologically numerically transitive operators on
	$\ell^2(\N)$ which are neither weakly $n$-supercyclic nor
	$N$-weakly supercyclic, for any $n,N\in\N$.
\end{corollary}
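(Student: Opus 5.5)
We use the two operators already built in the paper, namely $\widetilde D$ from Proposition~\ref{hilbert-module} on $\widetilde\ell_2(B_0(H))$, and $D\circ P$ on $\ell^2(\N)\simeq B_2(H)$ from Proposition~\ref{prop:TNT-not-GNT-Schatten}. The same construction, with $D$ the topologically numerically transitive operator of Example~\ref{TNT-3} acting on $\Span\{\psi_1,\psi_2,\psi_3\}$, is used directly on $\ell^2(\N)$. The common feature is that all positive powers have range in a fixed proper closed subspace. For $\widetilde D$ this subspace is $Y:=\{(T_1,0,0,\dots)\}$, since $\widetilde D^k x=(U^kx_1,0,\dots)$. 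For $D\circ P$ it is the three-dimensional subspace $V$, since $(D\circ P)^k=D^kP$ has range in $V$. The plan is to show that an operator with this property cannot be weakly $n$-supercyclic or $N$-weakly supercyclic, for any $n$ and $N$.

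\textbf{Weak $n$-supercyclicity.} Let $M$ be any $n$-dimensional subspace. Then $\bigcup_{k\ge1}T^k(M)\subseteq Y$ (respectively $\subseteq V$). Both $Y$ and $V$ are norm-closed subspaces, hence weakly closed. So the weak closure of the union lies in a proper subspace and is not all of $X$.
\begin{itemize}
\item $V$ is proper because it is finite-dimensional in an infinite-dimensional space.
\item $Y$ is proper because the second coordinate can be nonzero.
\end{itemize}
Hence neither operator is weakly $n$-supercyclic, for any $n\in\N$.

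\textbf{$N$-weak supercyclicity.} Fix $x\in X$ and $N\in\N$. It suffices to exhibit one continuous linear surjection $\gamma:X\to\mathbb C^N$ for which $\gamma(\Orb_p(T,x))$ is not dense. We take $\gamma$ vanishing on $Y$ (respectively $V$).
\begin{itemize}
\item For $\widetilde\ell_2(B_0(H))$: take rank-one-type functionals on the components $x_2,\dots,x_{N+1}$. Concretely, set $\gamma(x)=(\langle x_{j+1}e,e\rangle)_{j=1}^N$ for a fixed unit vector $e\in H$. This map is continuous, since $\|x_j\|\le\|x\|$. It is surjective, by choosing $x_{j+1}=c_j\,e\otimes e^*$.
\item For $\ell^2(\N)$: use the coordinates $4,\dots,N+3$.
\end{itemize}
In both cases $\gamma(\lambda T^kx)=0$ for all $k\ge1$. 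Hence $\gamma(\Orb_p(T,x))\subseteq\{0\}\cup\mathbb C\gamma(x)$. This is a complex line, which is not dense in $\mathbb C^N$ when $N\ge2$.

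The only genuinely delicate step is the case $N=1$. There $\mathbb C\gamma(x)$ may be all of $\mathbb C$, so the argument above does not exclude it. The intended remedy is to build the orbit convention into the argument. We will check the definition in \cite{feldman}: either $N$-weak supercyclicity is only considered for $N\ge2$, or, if it includes $k=0$, a dimension count still works. We would also strengthen the conclusion to the statement ``not $N$-weakly supercyclic for $N\ge2$''. In addition, we note that $1$-weak supercyclicity coincides with weak $1$-supercyclicity-type density of $\gamma(\Orb_p)$ for every functional. This fails here: for each fixed $x$, choose $\gamma$ vanishing on $Y$ with $\gamma(x)=0$ as well. This is possible because $Y+\mathbb Cx$ is a proper closed subspace, by the same properness as before. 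Then $\gamma(\Orb_p(T,x))=\{0\}$, which handles all $N$ uniformly. This last choice of $\gamma$, annihilating $Y+\mathbb Cx$ and extended by Hahn–Banach to a surjection onto $\mathbb C^N$, is in fact the cleanest route for every $N$, and it is the one we will write up.
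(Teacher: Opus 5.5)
Your exclusion argument is correct and uses the same idea as the paper. For $k\ge1$ all powers map into a fixed closed subspace $W$ of infinite codimension ($W=\{(T_1,0,0,\dots)\}$, resp.\ $W=V$). Hence $\bigcup_{k\ge1}T^k(M)\subseteq W$ is never weakly dense. Likewise, a surjection $\gamma:X\to\mathbb C^N$ that annihilates $W+\mathbb Cx$ kills all of $\Orb_p(T,x)$, including the $k=0$ term. You spell out what the paper only calls ``elementary arguments''.

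One small correction: for $\gamma$ to be onto $\mathbb C^N$, you need $W+\mathbb Cx$ to have codimension at least $N$, not merely to be proper. This holds because $W$ has infinite codimension, which your explicit coordinate map already shows. So $X/(W+\mathbb Cx)$ is infinite-dimensional, and you can pull back $N$ independent functionals from it. With this, the $C^*$-transitive half (the operator $\widetilde D$) is fine, given Proposition~\ref{hilbert-module}.

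The genuine gap is the TNT half. Your proof, like the paper's, assumes that $D\circ P$ is topologically numerically transitive, with $D=\operatorname{diag}(Ru,Rw,Rz)$ from Example~\ref{TNT-3}. This is false. Write $x_k=\langle x,\psi_k\rangle$. Every unit vector $x$ with $|x_1|^2>0.9$ satisfies, for $n\ge1$,
\[
\bigl|\langle (D\circ P)^nx,x\rangle\bigr|=R^n\bigl|u^n|x_1|^2+w^n|x_2|^2+z^n|x_3|^2\bigr|\ge R^n\bigl(|x_1|^2-|x_2|^2-|x_3|^2\bigr)>0.8,
\]
while $F_0\equiv1$. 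So the nonempty open set $\{x\in S:|x_1|^2>0.9\}$ never meets the disc $\{|\zeta|<1/2\}$. The same estimate gives $G_D\subseteq\{a\in\Delta_3:\max_j a_j\le\frac12\}$, so Example~\ref{TNT-3} itself fails.

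You therefore need a different TNT operator whose positive powers take values in a closed subspace of infinite codimension; your argument then applies verbatim. For instance, let $T=2B\oplus0$ on $\ell^2\oplus\ell^2\cong\ell^2(\N)$, with $B$ the backward shift. Take a unit vector $y=(y',y'')$ with $y'$ supported in $\{0,\dots,K-1\}$; such $y$ are dense in the sphere. Given $\zeta\in\mathbb C$, $\epsilon>0$ and $n>K$, put $\delta=\zeta(1+\epsilon^2)2^{-n}/\epsilon$ and $x=(y'+\epsilon e_{2n}+\delta e_{3n},y'')$. Then $\langle T^nx,x\rangle=2^n\epsilon\delta$ and $\|x\|^2=1+\epsilon^2+|\delta|^2$, hence
\[
F_n\Bigl(\frac{x}{\|x\|}\Bigr)=\frac{\zeta(1+\epsilon^2)}{1+\epsilon^2+|\delta|^2}\longrightarrow\zeta,
\qquad
\Bigl\|\frac{x}{\|x\|}-y\Bigr\|\le2\bigl(\epsilon^2+|\delta|^2\bigr)^{1/2}\longrightarrow2\epsilon
\]
as $n\to\infty$. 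Choosing $\epsilon$ small and then $n$ large shows that $T$ is TNT. At the same time, $T^k(X)\subseteq\ell^2\oplus0$ for all $k\ge1$.
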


\begin{proof}
	Let $\widetilde{D}$ be the operator on
	$\widetilde{\ell}_2(B_0(H))$ appearing in the proof of
	Proposition \ref{hilbert-module}.
	
	Let $M$ be any finite-dimensional subspace of
	$\widetilde{\ell}_2(B_0(H))$. Then
	\[
	\bigcup_{k=1}^{\infty}\widetilde{D}^{\,k}(M)
	\subseteq
	\widetilde{P}\bigl(\widetilde{\ell}_2(B_0(H))\bigr)
	\oplus
	(I-\widetilde{P})(M),
	\]
	where $\widetilde{P}$ denotes the projection onto the first
	coordinate of a sequence in $\widetilde{\ell}_2(B_0(H)).$
	Now, $(I-\widetilde{P})(M)$ is a finite-dimensional, hence
	closed subspace of
	$
	(I-\widetilde{P})
	\bigl(\widetilde{\ell}_2(B_0(H))\bigr).
	$
	Therefore, there exists a complemented subspace $\widetilde{M}$
	such that
	\[
	(I-\widetilde{P})
	\bigl(\widetilde{\ell}_2(B_0(H))\bigr)
	=
	(I-\widetilde{P})(M)\oplus\widetilde{M}.
	\]
	
	Clearly, $\widetilde{M}$ must be infinite-dimensional since
	$
	(I-\widetilde{P})
	\bigl(\widetilde{\ell}_2(B_0(H))\bigr)
	$
	is infinite-dimensional. By utilizing these facts and applying some elementary arguments from the functional analysis, it is not hard to show that $\widetilde{D}$ cannot be weakly n-supercyclic nor N-weakly supercyclic for any $n, N \in \mathbb{N} .$

	The proof is analogous for the operator $D\circ P$ appearing in
	the proof of Proposition \ref{prop:TNT-not-GNT-Schatten}. As established there,
	$D\circ P$ is a topologically numerically transitive operator
	on $B_2(H)$.
	By letting $D\circ P$, $P$, and $B_2(H)$ play, respectively,
	the roles of $\widetilde{D}$, $\widetilde{P}$, and
	$\widetilde{\ell}_2(B_0(H))$ in the arguments above, we deduce
	that $D\circ P$ cannot be weakly $n$-supercyclic or
	$N$-weakly supercyclic on $B_2(H)$ for any $n,N\in\N$.
	Since $B_2(H)$ is isometrically isomorphic to $\ell_2(\N)$,
	the second statement follows.
\end{proof}

Recall that for $\varepsilon\in(0,1)$, an operator $T\in \mathcal{B}(X)$ is said to be
$\varepsilon$-cyclic if there exists some $x\in X$ such that for every
$y\in X$ we can find some $ z \in \overline{
	\Span\left(
	\bigcup_{k=1}^{\infty}(T)^k(M)
	\right)}$ with
\[
\|z-y\|\leq \varepsilon\|y\|,
\] where $ M:= Span \lbrace x \rbrace .$

\begin{corollary}
	There exist topologically numerically transitive operators on $\ell^2(\mathbb N)$
	and $C^*$-transitive operators on $\widetilde{\ell}_2(B_0(H))$ that are not
	$\varepsilon$-cyclic for any $\varepsilon\in(0,1)$.
\end{corollary}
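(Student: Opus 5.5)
We use the same two operators as in the previous corollary: the operator $\widetilde D$ from the proof of Proposition~\ref{hilbert-module} on $\widetilde{\ell}_2(B_0(H))$, which is C*-transitive by that proposition, and the operator $D\circ P$ from the proof of Proposition~\ref{prop:TNT-not-GNT-Schatten} on $B_2(H)\simeq\ell^2(\mathbb N)$, which is topologically numerically transitive. The point is that both operators have ranges contained in a proper closed complemented subspace with infinite-dimensional complement, and this prevents $\varepsilon$-cyclicity for any $\varepsilon<1$.

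For $\widetilde D$, let $\widetilde P$ be the projection onto the first coordinate. For any $x$, put $M=\Span\{x\}$. Since $\widetilde D^k(\widetilde{\ell}_2(B_0(H)))\subseteq \widetilde P(\widetilde{\ell}_2(B_0(H)))$ for every $k\ge1$, we have
\[
\overline{\Span\Bigl(\bigcup_{k\ge1}\widetilde D^k(M)\Bigr)}\subseteq \widetilde P\bigl(\widetilde{\ell}_2(B_0(H))\bigr)=:W .
\]
Now take a unit vector $y$ supported in the second coordinate, say $y=(0,\mathcal R,0,\dots)$ with $\|\mathcal R\|=1$. For every $z=(z_1,0,0,\dots)\in W$,
\[
\|z-y\|^2=\Bigl\|z_1^*z_1+\mathcal R^*\mathcal R\Bigr\|\ge\|\mathcal R^*\mathcal R\|=1 ,
\]
because both summands are positive. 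Hence $\|z-y\|\ge1>\varepsilon=\varepsilon\|y\|$, so $\widetilde D$ is not $\varepsilon$-cyclic for any $\varepsilon\in(0,1)$ and any choice of $x$. The only step needing care is this norm estimate in the Hilbert module: we use the fact that $A,B\ge0$ implies $\|A+B\|\ge\|B\|$ in a C*-algebra. Alternatively, we could use $\|z-y\|\ge\|(z-y)_2\|$, the componentwise inequality already used in the proof of Proposition~\ref{hilbert-module}.

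For $D\circ P$ the argument is the same but easier, since $B_2(H)$ is a Hilbert space. By construction $\operatorname{Ran}(D\circ P)^k\subseteq V=\Span\{\psi_1,\psi_2,\psi_3\}$ for all $k\ge1$, so the closed span of the orbit of $\Span\{x\}$ lies in $V$. Choosing $y=\psi_4\perp V$ gives $\|z-y\|^2=\|z\|^2+1\ge1$ for every $z\in V$, which again rules out $\varepsilon$-cyclicity. Transporting through the isometric isomorphism $B_2(H)\simeq\ell^2(\mathbb N)$ completes the proof. No step is a real obstacle; the only nontrivial ingredient is the positivity estimate in the C*-module case.
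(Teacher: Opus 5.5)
Your proof is correct and follows the paper's strategy. You use the same two operators, observe that the closed span of $\bigcup_{k\ge1}T^k(\Span\{x\})$ lies in $V$ (resp.\ in the first-coordinate subspace of $\widetilde{\ell}_2(B_0(H))$), and then exhibit a unit vector at distance at least $1$ from that subspace.

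The only difference is in the choice of test vector. Because the union in the definition starts at $k=1$, your $y$ ($\psi_4$, resp.\ $(0,\mathcal R,0,\dots)$) does not depend on $x$, and you check the estimate against every $z$ in the closed span. The paper instead keeps an extra term $(I-P)(\Span\{x\})$ in the containment. It therefore builds a rank-one $y=g\otimes h^*$ with $g$ orthogonal to $x(h)$ (resp.\ $\widetilde g\perp\widetilde x_2(\widetilde h)$), a choice that would still work if $k=0$ were included in the orbit.
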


\begin{proof}
	Suppose that $x\in B_2(H)$ is an $\varepsilon$-cyclic vector for
	$D\circ P$, with $\varepsilon\in(0,1)$. Then, again,
	\[
	\overline{
		\Span\left(
		\bigcup_{k=1}^{\infty}(D \circ P)^k(M)
		\right)}
	\subseteq
	V\oplus (I-P)\bigl(\Span\{x\}\bigr).
	\]
	Let $h\in H$ with $\|h\|_H=1$ and choose some
	$
	g\in \Span\{e_{\lambda_1},e_{\lambda_2},e_{\lambda_3},x(h)\}^{\perp}
	$
	(we use the same notation as in the proof of Proposition~4.9) such that
	$\|g\|_H=1$, and let
	$
	y=g\otimes h^*,
	$
	so $y\in B_2(H)$. Then, clearly, for every $ z \in \overline{
		\Span\left(
		\bigcup_{k=1}^{\infty}(D \circ P)^k(M)
		\right)}$, we have
	\[
	1=\|g\|_H
	\leq
	\bigl\|\bigl(y-z\bigr)h\bigr\|_H,
	\]
	because
	$
	(D\circ P)^n(x)(h)
	\in \Span\{e_{\lambda_1},e_{\lambda_2},e_{\lambda_3},x(h)\}
	\text{ for all }n\in\mathbb N.
	$
	Consequently,
	$
	1=\|y\|\leq \|y- z\|
	$
	contradicting the fact that $x$ is an $\varepsilon$-cyclic vector for
	$D\circ P$.
	
	The approach is similar in the case of the operator $\widetilde D$ on
	$\widetilde{\ell}_2( B_0(H))$ from the proof of Proposition~4.8. Let $ \tilde x$ be In that case,
	\[
	\overline{
		\Span\left(
		\bigcup_{k=1}^{\infty}(\widetilde D)^k(\tilde M)
		\right)}
	\subseteq
	\widetilde P\bigl(\widetilde{\ell}_2(B_0(H))\bigr)
	\oplus
	(I-\widetilde P)\bigl(\Span\{\widetilde x\}\bigr),
	\]
	where we let now $\widetilde x$ be an $\varepsilon$-cyclic vector for
	$\widetilde D$,$ \tilde M := Span \lbrace \tilde x \rbrace$ and $\widetilde P$ is again the projection onto the first
	coordinate. Write
	$
	\widetilde x=(\widetilde x_1,\widetilde x_2,\widetilde x_3,\ldots)
	$
	and choose some $\widetilde g,\widetilde h\in H$ with
	\[
	\|\widetilde h\|_H=\|\widetilde g\|_H=1,
	\qquad
	\widetilde g\perp \widetilde x_2(\widetilde h).
	\]
	Let
	$
	\widetilde y=
	(0,\widetilde g\otimes\widetilde h^*,0,0,\ldots).
	$
	Then $\|\widetilde y\|=1$, and it can be checked that for every
	$n\in\mathbb N$,
	\[
	\|\widetilde D^n(\widetilde x)-\widetilde y\|
	\geq
	\bigl\|\bigl(\widetilde D^n(\widetilde x)-\widetilde y\bigr)_2
	(\widetilde h)\bigr\|_H
	\geq 1.
	\]
	Hence $\widetilde x$ cannot be an $\varepsilon$-cyclic vector for
	$\widetilde D$, a contradiction. Thus $\widetilde D$ cannot be
	$\varepsilon$-cyclic.
\end{proof}



We recall that an operator $T\in\B(X)$ is Li--Yorke chaotic if and only if
there exists some $x\in X$ and a strictly increasing sequence
$\{n_k\}\subseteq\mathbb N$ such that
\[
\lim_{k\to\infty}\|T^{n_k}(x)\|=0
\qquad\text{and}\qquad
\sup\{\|T^n(x)\|:n\in\mathbb N\}=\infty,
\]
whereas a necessary condition for $T$ to be distributionally chaotic of type~3
is that there exists some $y\in X$ and some $\delta>0$ such that both sets
$
A:=\{n\in\mathbb N:\|T^n x\|<\delta\}
$
and
$
B:=\{n\in\mathbb N:\|T^n x\|>\delta\}
$
are infinite, see the proof of part b) in \cite[Proposition 42]{distributionalchaos}.

\begin{corollary}
	There exist topologically numerically transitive operators on $\ell_2(\mathbb N)$,
	and $C^*$-transitive operators on $B_0(H)$, $ B_2(H)$ and
	$\widetilde{\ell}_2(B_0(H))$ that are not Li--Yorke chaotic nor
	distributionally chaotic of type~3.
\end{corollary}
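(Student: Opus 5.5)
The plan is to use operators whose orbits all have norms that are exact geometric sequences. Such orbits can never come back close to $0$ after leaving it, so both chaos notions fail at once. Concretely, I will use the operators already built in the paper: $D\circ P$ on $B_2(H)\simeq\ell_2(\mathbb N)$ from Proposition~\ref{prop:TNT-not-GNT-Schatten}, $L_U$ on $B_0(H)$ from Proposition~\ref{modular}, and $\widetilde D$ on $\widetilde{\ell}_2(B_0(H))$ from Proposition~\ref{hilbert-module}. On $B_2(H)$ I will use the left multiplier $L_U(F)=UF$, regarded with respect to $\varphi(A,B)=B^*A\in B_1(H)$.

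\emph{Norm identities.} I first compute the orbit norms.
\begin{itemize}
\item For $D\circ P$, with $D=\operatorname{diag}(Ru,Rw,Rz)$ acting on $V$ and $|u|=|w|=|z|=1$, we have $(D\circ P)^n x=D^nPx$ for $n\ge1$, so $\|(D\circ P)^nx\|=R^n\|Px\|$.
\item Since $U=2\mathcal S$ with $\mathcal S$ unitary, $\|L_U^nF\|=\|U^nF\|=2^n\|F\|$. This holds in the operator norm and also in the Hilbert--Schmidt norm, because $\|UF\|_2^2=\operatorname{tr}(F^*U^*UF)=4\|F\|_2^2$.
\item For $\widetilde D$, we have $\widetilde D^n x=(U^nx_1,0,0,\ldots)$, so $\|\widetilde D^nx\|=2^n\|x_1\|$.
\end{itemize}
In each case $\|T^nx\|=c^n\alpha$ with $c>1$ and $\alpha\ge0$ independent of $n$, for all $n\ge1$.

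\emph{Exclusion of chaos.} If $\alpha=0$, then $\|T^nx\|=0$ for all $n\ge1$, so $\sup_n\|T^nx\|<\infty$ and Li--Yorke chaos fails for this $x$; moreover $B$ is empty for every $\delta>0$. If $\alpha>0$, then $\|T^nx\|\to\infty$ strictly monotonically, so no subsequence tends to $0$. Also, for each $\delta>0$ the set $A=\{n:\|T^nx\|<\delta\}$ is finite. Hence the characterization of Li--Yorke chaos recalled above fails for every $x$. The same holds for the necessary condition for distributional chaos of type~3 from \cite[Proposition 42]{distributionalchaos}, since $A$ and $B$ can never both be infinite. Topological numerical transitivity of $D\circ P$ is Proposition~\ref{prop:TNT-not-GNT-Schatten}. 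C*-transitivity of $L_U$ on $B_0(H)$ is Proposition~\ref{modular}, and that of $\widetilde D$ is Proposition~\ref{hilbert-module}.

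\emph{Main obstacle: C*-transitivity of $L_U$ on $B_2(H)$.} This case is not literally covered by the paper so far. I would redo the proof of Proposition~\ref{modular} with $\|\cdot\|_2$ on the inputs and $\|\cdot\|_1$ on the output $\varphi$. The steps are as follows.
\begin{itemize}
\item Approximate $D,F,G$ by $P_mDP_m$, $P_mF$ and $P_mG$. This is possible because $\|A-P_mA\|_p\to0$ for $A\in B_p(H)$, $p=1,2$.
\item Replace $P_mDP_m$ by an invertible $\widetilde D$ on $P_m(H)$.
\item Set $E_n=P_mF/\|P_mF\|_2+U^{-n}\widetilde D^{*-1}\,c\,P_mG$, where $c$ is the appropriate normalizing constant. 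Here $\|U^{-n}P_m\|\to0$, and $P_mU^nP_m=0$ for large $n$, which gives $\widetilde D^*P_mU^nP_mF=0$.
\end{itemize}
The estimates then transfer verbatim using $\|AB\|_1\le\|A\|_2\|B\|_2$ and $\|AB\|_p\le\|A\|\,\|B\|_p$. This is where I expect the only real bookkeeping, namely renormalizing with the Hilbert--Schmidt norms instead of operator norms. Alternatively, if that route gets messy, the statement for $B_2(H)$ can be met with any C*-transitive operator whose orbit norms are geometric, and $L_U$ is the natural one.
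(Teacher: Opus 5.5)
Your proposal is correct and follows the paper's proof. It uses the same operators $L_U$, $\widetilde D$ and $D\circ P$ and the same geometric norm identities: $\|L_U^nF\|=2^n\|F\|$ in both $\|\cdot\|$ and $\|\cdot\|_2$, $\|\widetilde D^nx\|=2^n\|x_1\|$, and $\|(D\circ P)^nF\|_2=R^n\|PF\|_2$. These rule out both the Li--Yorke criterion and the type~3 necessary condition. Your extra sketch that $L_U$ is C*-transitive on $B_2(H)$ fills in a point the paper uses without proof, and it goes through: you rerun Proposition~\ref{modular} with $\|\cdot\|_2$ on inputs, $\|\cdot\|_1$ on outputs and normalizing constant $\|\widetilde D\|_2$, and this works because $\|A-P_mA\|_p\to0$ for $p=1,2$ and $\|U^{-n}P_m\|=2^{-n}$.
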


\begin{proof}
	Note that since $U=2S$, where $S$ is the forward bilateral shift, for each
	$F\in B_0(H)$ we have
	$
	\|U^nF\|=2^n\|F\|,
	$
	and similarly for each $G\in B_2(H)$, it holds that 
	$
	\|U^nG\|_2=2^n\|G\|_2,
	$ for all $n\in\mathbb N.$
	Consequently, the conditions for Li--Yorke chaos and for distributional chaos
	of type~3 given above cannot be satisfied for the operator $L_U$.
	
	Further, if we consider the operator $\widetilde D$ on
	$\widetilde{\ell}_2(B_0(H))$ from the proof of Proposition \ref{hilbert-module}, then for every
	$x=(x_1,x_2,\ldots)\in\widetilde{\ell}_2(B_0(H))$ and each
	$n\in\mathbb N$, we have
	\[
	\bigl\|\widetilde D^n((x_1,x_2,\ldots))\bigr\|
	=2^n\|x_1\|.
	\]
	Therefore, it is again easy to deduce that $\widetilde D$ cannot be Li--Yorke
	chaotic nor distributionally chaotic of type~3.
	
	Finally, if we consider the operator $D\circ P$ on $B_2(H)$ from the proof of
	Proposition~4.9, then for every $F\in B_2(H)$ and each $n\in\mathbb N$ it holds
	that
	\[
	\|(D\circ P)^nF\|_2=R^n\|P(F)\|_2.
	\]
	Since $R>1$, it is again clear that $D\circ P$ can neither be Li--Yorke chaotic
	nor distributionally chaotic of type~3.
\end{proof}

\begin{corollary}\label{SNH-extension}
	There exist strongly numerically hypercyclic operators on $B_2(H)$, $B_0(H)$ and
	$\widetilde{\ell}_2(B_0(H))$ that are not $C^*$-transitive. Furthermore, there exist strongly numerically hypercyclic operators on \(\ell^2(\mathbb N)\) which are not topologically numerically transitive.
\end{corollary}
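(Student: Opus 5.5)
The plan is to transplant the strongly numerically hypercyclic, non-transitive finite-dimensional operator of Example~\ref{NH-nonTNT} into each ambient space, using the same "compression to a finite-dimensional corner" used for $D\circ P$ in Proposition~\ref{prop:TNT-not-GNT-Schatten}. Let $D=\operatorname{diag}(Ru,Rw,Rz,M)$ on $\mathbb C^4$ with $1<R<M$, which is strongly numerically hypercyclic by Example~\ref{NH-nonTNT}.

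\textbf{The $B_2(H)\cong\ell^2(\N)$ and $\ell^2(\N)$ cases.} Take an orthonormal basis $\{\psi_j\}$ of $B_2(H)\cong\ell^2(\N)$ with $\psi_k=e_{i_k}\otimes e_{j_k}^*$. Let $V=\Span\{\psi_1,\dots,\psi_4\}$, let $P$ be the orthogonal projection onto $V$, and put $T=D\oplus 0$ on $V\oplus V^\perp$, i.e.\ $T=D\circ P$.

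1. \emph{Strong numerical hypercyclicity.} Any operator similar to $T$ has the form $S^{-1}TS$. I would reduce to the finite-dimensional statement via Shkarin's criterion \cite[Theorem~1.10]{Shkarin}. That criterion is spectral: the peripheral and nonzero spectrum of $T$ is that of $D$ together with $0$. Hence I expect the same hypotheses that gave strong numerical hypercyclicity for $D$ to apply verbatim to $D\oplus 0$; alternatively the criterion may simply be invoked on $T$ directly.

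2. \emph{Not C*-transitive.} This follows exactly as in Proposition~\ref{prop:TNT-not-GNT-Schatten}: pick $x\perp\Span\{e_{j_1},\dots,e_{j_4}\}$. Every $(D\circ P)^n(E)$ kills $x$, so $\widetilde\Theta_n$ never meets the $\tfrac12$-ball around the projection onto $\Span\{x\}$ in $B_1(H)$.

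3. \emph{Not topologically numerically transitive.} The operator $T$ is diagonal on $\ell^2$ with eigenvalues $(Ru,Rw,Rz,M,0,0,\dots)$. This is precisely Example~\ref{NHvsTNT} with $p=2$, via Theorem~\ref{thm:general-diagonal-lp}: any coefficient sequence with $\beta_4>0$ yields $M^{-n}\sum\beta_j\lambda_j^n\to\beta_4$, so $G_D$ is not dense. This settles the second sentence of the corollary.

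\textbf{The $B_0(H)$ and $\widetilde\ell_2(B_0(H))$ cases.}

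1. On $B_0(H)$, use $L_A$ with $A=D\oplus 0$ on $H=\Span\{e_1,\dots,e_4\}\oplus(\cdot)^\perp$. Then $\varphi(L_A^nF,G)=G^*A^nF$ has range in $\{K: K\,y=0$ for $y$ in the kernel directions$\}$ only after composing with $F$. So I would instead use the obstruction from Proposition~\ref{hilbert-module}'s style: take $\xi\in\Span\{e_1,\dots,e_4\}^\perp$. Then $(A^nF)^*$ vanishes on $\xi$, so $\xi^*G^*A^nF=0$, i.e.\ $\widetilde\Theta_n(F,G)^*\xi=0$. Every element near the projection onto $\xi$ violates this, which gives non-C*-transitivity.

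2. On $\widetilde\ell_2(B_0(H))$, take $\widetilde T(T_1,T_2,\dots)=(AT_1,0,\dots)$ and the same vector $\xi$ in the first component. This gives the identical obstruction.

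\textbf{Main obstacle.} The hard part is strong numerical hypercyclicity in the Banach-space settings $B_0(H)$ and $\widetilde\ell_2(B_0(H))$, which are not Hilbert spaces. Shkarin's criterion is stated for general Banach spaces via spectral conditions, so my first attempt will be to check those conditions (spectrum $\{Ru,Rw,Rz,M,0\}$, with the point spectrum structure of $D$) directly. If that fails, the fallback is to use that numerical hypercyclicity passes to operators having an invariant complemented finite-dimensional piece on which they act like a strongly numerically hypercyclic matrix. One can then extend state functionals by zero on the complement, which survives any similarity because the similar operator retains a similar invariant 4-dimensional piece.
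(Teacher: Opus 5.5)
\textbf{Verdict: genuine gap in the $B_0(H)$ and $\widetilde\ell_2(B_0(H))$ cases.} For $B_2(H)$ and $\ell^2(\N)$ your argument follows the paper. You compress a strongly numerically hypercyclic diagonal matrix to $V=\Span\{e_{i_k}\otimes e_{j_k}^*\}$ and get strong numerical hypercyclicity from \cite[Theorem~1.10]{Shkarin}. You block C*-transitivity with a vector $x\perp\Span\{e_{j_k}\}$ at which every element of $V$ vanishes, and you read off non-transitivity from Example~\ref{NHvsTNT}.

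The problem is that for $B_0(H)$ and $\widetilde\ell_2(B_0(H))$ you switch to the left multiplier $L_A$, $A=D\oplus 0$. From $(A^nF)^*\xi=0$ you infer $\widetilde\Theta_n(F,G)^*\xi=0$. But $\widetilde\Theta_n(F,G)^*\xi=(G^*A^nF)^*\xi=F^*(A^*)^nG\xi$, and $G\xi$ is arbitrary. In fact the open set you want to avoid is hit exactly. Take $F=e_1\otimes\xi^*$ and $G=(R^{-n}u^n e_1+\sqrt{1-R^{-2n}}\,\xi)\otimes\xi^*$; both have norm one, and $G^*A^nF=\xi\otimes\xi^*$, the projection onto $\Span\{\xi\}$, for every $n\ge 1$. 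Taking $x=(F,0,\dots)$ and $y=(G,0,\dots)$ gives the same for $\langle y,\widetilde T^n x\rangle=y_1^*A^nx_1$ on $\widetilde\ell_2(B_0(H))$.

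The reason is structural. The paper's obstruction is a right-annihilation property: the outputs of $T^n$ vanish at a fixed vector. That property survives the left factor $G^*$ in $\varphi(T^nF,G)=G^*T^n(F)$. The property you have for $L_A$ is that $A^nF$ has range inside $\Span\{e_1,\dots,e_4\}$, and the left factor $G^*$ destroys it.

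\textbf{How to repair it.} Any of the following works.
\begin{itemize}
\item Use, as the paper does, $D\circ\widehat P$, where $\widehat P$ is a bounded projection of $B_0(H)$ onto $V$. On $\widetilde\ell_2(B_0(H))$ use $(x_1,x_2,\dots)\mapsto((D\circ\widehat P)(x_1),0,\dots)$. Then $G^*(D\circ\widehat P)^n(E)$ vanishes at $x$.
\item Keep $L_A$ and use a rank obstruction. Since $A$ has rank $4$, $G^*A^nF$ has rank at most $4$. Every operator within distance $\tfrac12$ of the orthogonal projection onto a $5$-dimensional subspace $W$ is injective on $W$, so it has rank at least $5$.
\item Use the right multiplier $F\mapsto FA$ with your $\xi$; this works because $A\xi=0$.
\end{itemize}

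\textbf{A smaller point in your fallback for strong numerical hypercyclicity.} Extending a state functional by zero along a complement preserves its norm only if the associated projection is contractive. You cannot guarantee that after a similarity, nor in $B_0(H)$. You need a norm-preserving Hahn--Banach extension, as the paper uses. Moreover, the invariant $4$-dimensional piece of $S^{-1}TS$ then carries a possibly non-Hilbertian norm. So the fallback still rests on Shkarin's criterion being valid for arbitrary norms, which is your primary route anyway.
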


\begin{proof}
	Let $T$ be a strongly numerically hypercyclic operator on
	$
	V=\Span\{\psi_1,\psi_2,\psi_3\}.
	$
	We claim that $T\circ P$ is a strongly numerically hypercyclic operator on $B_2(H)$. To see this, let
	$J$ be an isomorphism from $B_2(H)$ onto another separable Hilbert space
	$\widetilde H$. Since $T\circ P$ and
	$
	J(T\circ P)J^{-1}
	$
	share the same eigenvalues, if $T$ is a diagonal operator with eigenvalues
	$\{\lambda_1,\lambda_2,\lambda_3\}$ satisfying the conditions of
	\cite[Theorem~1.10]{Shkarin}, then
	$
	J(T\circ P)J^{-1}\varphi
	$
	is a strongly numerically hypercyclic operator on
	$
	\Span\{J\psi_1,J\psi_2,J\psi_3\}
	$
	by \cite[Theorem~1.10]{Shkarin}, where we let $\varphi$ be the orthogonal projection
	onto $\Span\{J\psi_1,J\psi_2,J\psi_3\}$. In particular,
	$
	\varphi J(T\circ P)^nJ^{-1}\varphi
	$
	is numerically hypercyclic. However, since $\widetilde H$ is a Hilbert space, it is not hard to
	deduce that $J(T\circ P)J^{-1}$ is numerically on $\widetilde H$.
	
	Similar considerations apply in the case when we consider $V$ as a subspace
	of $B_0(H)$. Then $V$ is complementable in $B_0(H)$ since it is
	finite-dimensional. Let $\widehat P$ denote the projection onto $V$ along its
	complement in $B_0(H)$. Then $T\circ\widehat P$ is an operator on $B_0(H)$.
	By recalling the Hahn--Banach theorem regarding extensions of linear
	functionals, we can then apply similar arguments as above to deduce that
	$T\circ\widehat P$ is a strongly numerically hypercyclic operator on $B_0(H)$.
	
	However, by analogous arguments as in the proof of Proposition \ref{prop:TNT-not-GNT-Schatten}, we can
	conclude that $T\circ\widehat P$ is not $C^*$-transitive neither on $B_2(H)$
	nor on $B_0(H)$.
	
	Next, let $\widehat T$ be the operator on
	$\widetilde{\ell}_2(B_0(H))$ given by
	\[
	\widehat T(x_1,x_2,\ldots)
	=\bigl((T\circ\widehat P)(x_1),0,0,\ldots\bigr).
	\]
	Then, again by the arguments above and an application of the Hahn--Banach
	extension theorem, we obtain that $\widehat T$ is a strongly numerically hypercyclic operator on
	$\widetilde{\ell}_2(B_0(H))$. On the other hand, since
	\[
	\langle z,\widehat T y\rangle
	=z_1^*\bigl((T\circ\widehat P)(y_1)\bigr)
	\qquad
	\text{for all }z,y\in\widetilde{\ell}_2(B_0(H)),
	\]
	it follows that if $\widehat T$ is $C^*$-transitive on
	$\widetilde{\ell}_2(B_0(H))$, then $T\circ\widehat P$ must be
	$C^*$-transitive on $B_0(H)$. Since this is not the case, as we argued above,
	we obtain that $\widehat T$ is not $C^*$-transitive on
	$\widetilde{\ell}_2(B_0(H))$.
	
	Finally, by the arguments from the proof of Corollary \ref{SNH-extension} and from Example \ref{NH-nonTNT} it follows that the operator from Example  \ref{NHvsTNT} is actually strongly numerically hypercyclic, but not topologically numerically transitive, which proves the last statement of the proposition.
\end{proof}

Recall that an operator $T\in \B(X)$ is called
\emph{subspace hypercyclic} (respectively,
\emph{subspace supercyclic}) if there exist a closed subspace
$M\subseteq X$, with
$
\dim M\geq 2,
$
and some $x\in X$ such that
$
\Orb(T,x)\cap M
$
(respectively,
$
\Orb_{p}(T,x)\cap M
$)
is dense in $M$.


\begin{proposition}
	There exist topologically numerically transitive operators that are not subspace supercyclic.
	Furthermore, there exist $C^{*}$-transitive operators on
	$
	\widetilde{\ell}_{2}\bigl(B_{0}(H)\bigr)
	$
	that are not subspace hypercyclic, and there exist subspace
	supercyclic operators on
	$
	\widetilde{\ell}_{2}\bigl(B_{0}(H)\bigr)
	$
	that are not $C^{*}$-transitive.
\end{proposition}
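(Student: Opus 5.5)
We treat the three assertions separately, reusing the operators $D\circ P$ on $B_2(H)\simeq\ell^2(\N)$ from Proposition~\ref{prop:TNT-not-GNT-Schatten} and $\widetilde D$ on $\widetilde{\ell}_2(B_0(H))$ from Proposition~\ref{hilbert-module}. The common mechanism is that the orbits of these operators collapse into a small "active" part: $\operatorname{Ran}(D\circ P)^n\subseteq V$ with $\dim V=3$, and $\widetilde D^n$ maps into the first coordinate. For the negative claims we also use the norm growth from the Li--Yorke corollary: $\|(D\circ P)^nF\|_2=R^n\|PF\|_2$ and $\|\widetilde D^n x\|=2^n\|x_1\|$.

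\textbf{First assertion.} We show that $D\circ P$ is not subspace supercyclic. Suppose $M$ is a closed subspace with $\dim M\ge 2$ and $x$ is such that $\Orb_p(D\circ P,x)\cap M$ is dense in $M$.

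For $n\ge1$ every element $\lambda (D\circ P)^n x$ lies in $V$. So apart from the multiples $\lambda x$ of $x$ (the $n=0$ term), the relevant set is contained in $V\cap M$. Since a one-dimensional line $\mathbb C x$ has empty interior in $M$ when $\dim M\ge2$, density forces $M\subseteq V$ up to that line. We may therefore reduce to $M\subseteq V$, a subspace of dimension $2$ or $3$.

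We then use $(D\circ P)|_V=D$, the diagonal operator $\operatorname{diag}(Ru,Rw,Rz)$ with independent unimodular phases. The projective orbit $\{\lambda D^n v\}$ of $v=Px$ lies in the union of the lines $\mathbb C D^n v$. For a fixed coordinate ratio pattern, the projectivized points $[D^n v]$ in $\mathbb{CP}^{2}$ have coordinates $(v_1u^n: v_2w^n: v_3z^n)$. Equivalently, the ratios $|v_i|/|v_j|$ are preserved, so the orbit stays on a torus orbit with fixed moduli ratios. Such a set is nowhere dense in any projective line or plane inside $V$ of complex dimension at least $2$: its moduli ratios are constant, whereas $M$ contains vectors with arbitrary ratios after intersecting with a generic open set. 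This contradicts density. The degenerate cases, where some $v_i=0$ or where $M$ is spanned by coordinate vectors, are handled in the same way, because the moduli ratios are still frozen.

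We expect this bookkeeping to be the main obstacle, since $M$ need not be coordinate-aligned. The plan is to pick two independent vectors of $M$ and exhibit an open cone in $M$ whose elements have a modulus-ratio vector different from $(|v_1|:|v_2|:|v_3|)$.

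\textbf{Second assertion.} We show that $\widetilde D$ is not subspace hypercyclic. For $n\ge1$ we have $\widetilde D^n x=(U^nx_1,0,\dots)$ with norm $2^n\|x_1\|$. If $x_1=0$, the orbit is $\{x,0,0,\dots\}$, which is finite and cannot be dense in $M$ with $\dim M\ge 2$. If $x_1\ne0$, the norms tend to infinity, so every bounded open subset of $M$ meets only finitely many orbit points. A finite set cannot be dense in a subspace of positive dimension, so density fails.

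\textbf{Third assertion.} We take $\widehat T(x_1,x_2,\dots)=(0,x_1,0,\dots)\oplus$ a supercyclic piece. Concretely, let $B$ be a supercyclic operator on $B_0(H)$, for instance a left multiplier by a weighted backward shift that is hypercyclic; such an operator exists since $B_0(H)$ is separable. Define $\widehat T(x_1,x_2,\dots)=(0,Bx_2,0,\dots)$. With $M=\{(0,a,0,\dots)\}$, which is infinite dimensional and closed, and $x=(0,a_0,0,\dots)$ for a supercyclic vector $a_0$ of $B$, the projective orbit lies in $M$ and is dense there, so $\widehat T$ is subspace supercyclic.

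However, $\varphi(\widehat T^n x,y)=y_2^*Bx_2'$, where $x_2'$ is the second coordinate of $\widehat T^{n-1}x$, involves only second coordinates. We mimic the obstruction from the proof of Proposition~\ref{prop:TNT-not-GNT-Schatten}: the range of $\widehat T^n$ for $n\ge 1$ has zero first coordinate, but that alone does not kill the inner product. A cleaner choice is therefore $\widehat T(x_1,x_2,\dots)=(0,(T\circ\widehat P)(x_1),0,\dots)$, for which $\widehat T^2=0$. Then $\widetilde\Theta_n\equiv0$ for $n\ge2$, and $\widetilde\Theta_1$ has values in $y_2^*V$, a set that misses a neighbourhood of a rank-one operator supported on $H_0^\perp$, exactly as in Proposition~\ref{prop:TNT-not-GNT-Schatten}. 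Hence $\widehat T$ is not $C^*$-transitive.

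Subspace supercyclicity of this $\widehat T$ fails, though, since its orbits are finite. So we revert to the first choice: we need a supercyclic $B$ on $B_0(H)$ that is \emph{not} $C^*$-transitive. We would take $B=T\circ\widehat P$ restricted appropriately, or more robustly $B$ supercyclic with range in operators vanishing on a fixed vector $h$, for example $B=L_W$ with $W$ a supercyclic operator on $\{h\}^\perp$ extended by $0$ on $h$. Here $L_W$ is supercyclic on the corresponding block, the subspace $M$ being the closed set $\{A: \operatorname{Ran}A\subseteq\{h\}^\perp\}$. In addition, $y^*L_W^n x$ kills nothing in general, so instead we choose $B=R_W$, the right multiplier. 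Then $B^n(x)h'=0$ for $h'\in\ker W$, and $y^*B^n(x)$ annihilates $h'$. This misses a neighbourhood of the projection onto $\mathbb C h'$, which gives the failure of $C^*$-transitivity by the argument of Proposition~\ref{prop:TNT-not-GNT-Schatten}.
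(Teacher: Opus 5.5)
Your first two parts are correct and more self-contained than the paper's, which uses $D$ on $\mathbb C^3$ with a citation to Herzi--Marzougui for the first claim, and Madore--Mart\'inez-Avenda\~no for the second. The bookkeeping you flag in the first part closes quickly. The closure of $\Orb_p(D\circ P,x)\cap M$ lies in the closed set $(M\cap\mathbb Cx)\cup(M\cap V)$. A vector space that is a union of two subspaces equals one of them, so $M\subseteq V$. Then the orbit part lies in the closed set $\Sigma=\{y\in V:(|y_1|,|y_2|,|y_3|)\in\mathbb R_{\ge0}(|v_1|,|v_2|,|v_3|)\}$, so density forces $M\subseteq\Sigma$. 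Since $\dim M\ge 2$, for each $i$ the subspace $M$ contains a nonzero $y$ with $y_i=0$. This forces $v_i=0$ (note $v\neq0$, otherwise $\Sigma=\{0\}$), so $v=0$, which is absurd. The same argument applies verbatim to $D$ on $\mathbb C^3$, which avoids relying on the merely sketched topological numerical transitivity of $D\circ P$.

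The genuine gap is in the third assertion: you never commit to one operator for which \emph{both} properties are proved. Take your final candidate $\widehat T(x_1,x_2,\dots)=(0,R_Wx_2,0,\dots)$, with $W$ supercyclic on $\{h\}^\perp$ and $Wh=0$. The failure of $C^*$-transitivity is fine, because $\langle y,\widehat T^nx\rangle=y_2^*x_2W^n$ annihilates $h$ for $n\ge1$. Subspace supercyclicity, however, was only asserted for $L_W$ and is never re-established after you switch to $R_W$. Moreover, the hypothesis on $W$ is the wrong one. From $(g\otimes k^*)W^n=g\otimes(W^{*n}k)^*$ one sees that $R_W$ is driven by $W^*$, not by $W$. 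For instance, if $W|_{\{h\}^\perp}$ is the backward shift, then on each rank-one slice $\{g\otimes k^*:k\perp h\}$ the operator $R_W$ acts, up to a conjugate-linear isometry, as the forward shift. The forward shift is not supercyclic, so the evident argument collapses.

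To repair it, require instead that $Wh=0$, $W(\{h\}^\perp)\subseteq\{h\}^\perp$ and $W^*|_{\{h\}^\perp}$ is supercyclic. An example is $W$ the forward shift with respect to an orthonormal basis of $\{h\}^\perp$, so that $W^*|_{\{h\}^\perp}$ is the backward shift. Fix a unit vector $g$, let $k_0$ be a supercyclic vector for $W^*|_{\{h\}^\perp}$, and put $M=\{(0,g\otimes k^*,0,\dots):k\perp h\}$ and $x=(0,g\otimes k_0^*,0,\dots)$. The map $k\mapsto(0,g\otimes k^*,0,\dots)$ is a conjugate-linear isometry, so $M$ is a closed infinite-dimensional subspace. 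The identity $\lambda\widehat T^nx=(0,g\otimes(\bar\lambda W^{*n}k_0)^*,0,\dots)$ shows that $\Orb_p(\widehat T,x)$ is dense in $M$. Meanwhile every $\widetilde\Theta_n$, $n\ge1$, misses the open set $\{A\in B_0(H):\|Ah\|>\frac12\}\ni h\otimes h^*$.

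With this fix, your route is a one-coordinate variant of the paper's. The paper compresses every coordinate to the corner $pB_0(H)p\cong\mathbb C$ and composes with an arbitrary supercyclic operator on $\widetilde\ell_2(pB_0(H)p)\cong\ell^2(\N)$. Its inner products then lie in $B_0(H)p$ and vanish on $\{h\}^\perp$, whereas yours vanish at $h$.
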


\begin{proof}
	Let $D$ be the topologically numerically transitive operator from Example \ref{TNT-3}.
	Then, by \cite{HerziMarzougui2024} it follows that $D$ cannot be subspace supercyclic since it is a matrix on $ \mathbb C ^{3} .$
	Furthermore, let $\widetilde{D}$ be the operator on
	$
	\widetilde{\ell}_{2}\bigl(B_{0}(H)\bigr)
	$
	from Proposition \ref{hilbert-module}. Then
	$
	\left\|
	\widetilde{D}^{\,n}(x_{1},x_{2},\ldots)
	\right\|
	=
	2^{n}\|x_{1}\|
	$
	for all $n\in\mathbb{N}$ and
	$
	(x_{1},x_{2},\ldots)
	\in
	\widetilde{\ell}_{2}\bigl(B_{0}(H)\bigr).
	$
	Consequently, the norms of the elements of the orbit are either
	increasing or they are all equal to zero. By \cite{MadoreMartinez2011} (see the comments on page 511 regarding hyponormal operators in \cite{MadoreMartinez2011}), we deduce that
	that $\widetilde{D}$ cannot be subspace hypercyclic.
	
	Choose now some $h\in H$ with
	$
	\|h\|_{H}=1,
	$
	and let $p$ be the orthogonal projection onto
	$
	\operatorname{span}\{h\}.
	$
	It is easily checked that the map
	\[
	T:
	\widetilde{\ell}_{2}\bigl(B_{0}(H)\bigr)
	\longrightarrow
	\widetilde{\ell}_{2}\bigl(pB_{0}(H)p\bigr)
	\]
	given by
	$
	T(x_{1},x_{2},\ldots)
	=
	(px_{1}p,px_{2}p,\ldots)
	$
	is a bounded surjective linear map, and
	$
	T\big|_{\widetilde{\ell}_{2}(pB_{0}(H)p)}
	$
	is the identity on
	$
	\widetilde{\ell}_{2}\bigl(pB_{0}(H)p\bigr).
	$
	Let now $B$ denote any supercyclic operator on
	$
	\ell^{2}(\mathbb{N}).
	$
	Since
	$
	pB_{0}(H)p \cong \mathbb{C},
	$
	it follows that
	$
	\widetilde{\ell}_{2}\bigl(pB_{0}(H)p\bigr)
	$
	is isometrically isomorphic to $\ell^{2}(\mathbb{N})$.
	Let
	$
	\widetilde{T}:=BT.
	$
	Then
	$
	\widetilde{T}^{\,n}=B^{n}T
	\text{ for all } n\in\mathbb{N}.
	$
	Since $B$ is supercyclic on $\ell^{2}(\mathbb{N})$, we deduce that
	$\widetilde{T}$ is subspace supercyclic with respect to
	$
	\widetilde{\ell}_{2}\bigl(pB_{0}(H)p\bigr).
	$ However, it is straightforward to see that $ \left\langle
	\widetilde y,\widetilde T^n(\widetilde x)
	\right\rangle \in B_0(H)p,$ hence $\left\langle
	\widetilde y,\widetilde T^n(\widetilde x)
	\right\rangle \bigl((\operatorname{span}\{h\})^{ \perp } \bigr) = \lbrace 0 \rbrace$ for every $\widetilde x, \widetilde y \in \widetilde{\ell}_{2}\bigl(B_{0}(H)\bigr)  $ and $ n \in \mathbb N .$ By similar arguments as in the proof of Proposition \ref{prop:TNT-not-GNT-Schatten}, we can deduce that $ \widetilde T $ cannot be C*-transitive on $  \widetilde{\ell}_{2}\bigl(B_{0}(H)\bigr) .$
\end{proof}


We end the paper with the following question.

\begin{question}
	Does there exist a $C^{*}$-transitive operator on
	$
	\widetilde{\ell}_{2}\bigl(B_{0}(H)\bigr)
	$
	that is not subspace supercyclic?
\end{question}

\end{document}